\newcolumntype{L}{>{\displaystyle}l} \newcolumntype{C}{>{\displaystyle}c} \newcolumntype{R}{>{\displaystyle}r}
\newtheorem{theorem}{Theorem}[section]
\newtheorem{proposition}[theorem]{Proposition}
\newtheorem{lemma}[theorem]{Lemma}
\newtheorem{corollary}[theorem]{Corollary}
\newtheorem{conjecture}[theorem]{Conjecture}
\newtheorem*{theorem*}{Theorem}
\theoremstyle{definition}
\newtheorem{example}[theorem]{Example}
\newtheorem{convention}[theorem]{Convention}
\theoremstyle{remark}
\newtheorem{remark}[theorem]{Remark}
\theoremstyle{plain}
\numberwithin{equation}{section}
\numberwithin{figure}{section}
\numberwithin{table}{section}
\definecolor{blue1}{HTML}{007CFC}
\definecolor{green1}{HTML}{7DC636}
\definecolor{red1}{HTML}{D20000}
\newcommand\blue[1]{{\color{blue1}{#1}}}
\DeclareMathOperator{\apode}{s}
\DeclareMathOperator{\Conv}{Conv}
\DeclareMathOperator{\GP}{GP}
\DeclareMathOperator{\Id}{Id}
\DeclareMathOperator{\relint}{relint}
\DeclareMathOperator{\supp}{s}
\DeclareMathOperator{\vol}{vol}
\newcommand{\arr}{\mathcal A}
\newcommand{\bdot}{\bm\cdot}
\newcommand{\B}[1]{\mathtt{#1}}
\newcommand{\cP}{{\mathcal{P}}}
\newcommand{\cS}{{\mathcal{S}}}
\newcommand{\carr}{{\mathcal{C}}}
\newcommand{\cpoly}{\chi}
\newcommand{\face}{\Sigma}
\newcommand{\frakc}{{\mathfrak{c}}}
\newcommand{\frakl}{{\mathfrak{l}}}
\newcommand{\frakz}{{\mathfrak{z}}}
\newcommand{\f}{{\mathfrak{f}}}
\newcommand{\maxflat}{\top}
\newcommand{\opp}[1]{\overline{#1}}
\newcommand{\p}{{\mathfrak{p}}}
\newcommand{\qand}{\quad\text{and}\quad}
\newcommand{\q}{{\mathfrak{q}}}
\newcommand{\rmH}{{\mathrm{H}}}
\newcommand{\RR}{\mathbb{R}}
\newcommand{\QQ}{\mathbb{Q}}
\newcommand{\rr}{{\mathfrak{r}}}
\newcommand{\sfh}{{\mathsf{h}}}
\newcommand{\sfp}{{\mathsf{p}}}
\newcommand{\sfq}{{\mathsf{q}}}
\newcommand{\tits}{\mathsf{\Sigma}}
\newcommand{\rmX}{\mathrm{X}}
\newcommand{\rmY}{\mathrm{Y}}
\newcommand{\ZZ}{\mathbb{Z}}
\newcommand{\fflat}{\mathcal{L}}
\newcommand{\qqand}{\qquad\text{and}\qquad}
\newcommand{\frakB}{{\mathfrak{B}}}
\newcommand{\frakC}{{\mathfrak{C}}}
\newcommand{\frakS}{{\mathfrak{S}}}
\DeclareMathOperator{\Des}{Des}
\DeclareMathOperator{\des}{des}
\DeclareMathOperator{\Exc}{Exc}
\DeclareMathOperator{\exc}{exc}
\DeclareMathOperator{\fexc}{fexc}
\DeclareMathOperator{\Neg}{Neg}
\DeclareMathOperator{\fneg}{neg}
\tikzset{
anticlockwise arc centered at/.style={
to path={
let \p1=(\tikztostart), \p2=(\tikztotarget), \p3=(#1) in
\pgfextra{
\pgfmathsetmacro{\anglestart}{atan2(\y1-\y3,\x1-\x3)}
\pgfmathsetmacro{\angletarget}{atan2(\y2-\y3,\x2-\x3)}
\pgfmathsetmacro{\angletarget}
{\angletarget < \anglestart ? \angletarget+360 : \angletarget}
\pgfmathsetmacro{\radius}{veclen(\y1-\y3,\x1-\x3)}}
arc(\anglestart:\angletarget:\radius pt) -- (\tikztotarget)},},
clockwise arc centered at/.style={
to path={
let \p1=(\tikztostart), \p2=(\tikztotarget), \p3=(#1) in
\pgfextra{
\pgfmathsetmacro{\anglestart}{atan2(\y1-\y3,\x1-\x3)}
\pgfmathsetmacro{\angletarget}{atan2(\y2-\y3,\x2-\x3)}
\pgfmathsetmacro{\angletarget}
{\angletarget > \anglestart ? \angletarget - 360 : \angletarget}
\pgfmathsetmacro{\radius}{veclen(\y1-\y3,\x1-\x3)}}
arc(\anglestart:\angletarget:\radius pt)  -- (\tikztotarget)},},}
\newcommand{\phantomarcThroughThreePoints}[4][]{
\coordinate (middle1) at ($(#2)!.5!(#3)$);
\coordinate (middle2) at ($(#3)!.5!(#4)$);
\coordinate (aux1) at ($(middle1)!1!90:(#3)$);
\coordinate (aux2) at ($(middle2)!1!90:(#4)$);
\coordinate (center#2) at ($(intersection of middle1--aux1 and middle2--aux2)$);
\draw[#1,draw=none,name path=kamaan#2]
let \p1=($(#2)-(center#2)$),
\p2=($(#4)-(center#2)$),
\n0={veclen(\p1)},
\n1={atan2(\y1,\x1)},
\n2={atan2(\y2,\x2)},
\n3={\n2>\n1?\n2:\n2+360}
in (#2) arc(\n1:\n3:\n0);}
\newcommand{\arcThroughThreePoints}[3]{
\phantomarcThroughThreePoints{#1}{#2}{#3};
\draw (#1) to[anticlockwise arc centered at=center#1] (#3);}
\begin{document}

\title{The polytope algebra of generalized permutahedra}

\author[J.~Bastidas]{Jose Bastidas}
\address{Department of Mathematics\\
Cornell University\\
Ithaca, NY 14853}
\email{\href{mailto:jdb394@cornell.edu}{jdb394@cornell.edu}}
\urladdr{\url{https://sites.google.com/view/bastidas}}

\begin{abstract}
The polytope subalgebra of deformations of a zonotope can be endowed with the structure of a module over the Tits algebra of the corresponding hyperplane arrangement. We explore this construction and find relations between statistics on (signed) permutations and the module structure in the case of (type B) generalized permutahedra. In type B, the module structure surprisingly reveals that any family of generators (via signed Minkowski sums) for generalized permutahedra of type B will contain at least $2^{d-1}$ full-dimensional polytopes. We find a generating family of simplices attaining this minimum. Finally, we prove that the relations defining the polytope algebra are compatible with the Hopf monoid structure of generalized permutahedra.
\end{abstract}
\maketitle

\section{Introduction}

Generalized permutahedra serve as a geometric model for many \emph{classical} (type A) combinatorial objects and have been extensively studied in resent years. Notably, Aguiar and Ardila~\citep{aa17} endowed generalized permutahedra with the structure of a Hopf monoid {\sf GP} in the category of species and, in so doing, gave a unified framework to study similar algebraic structures over many different families of combinatorial objects. At the same time, McMullen's polytope algebra~\citep{mcmullen89} offers a different algebraic perspective to study generalized permutahedra.

One of the goals of the present paper is to investigate the compatibility between both structures.
To achieve this goal, we take a more general approach and study deformations of an arbitrary zonotope.
We then specialize our results to deformations Coxeter permutahedra of type A and type B, revealing remarkable connections with (type B) Eulerian polynomials and statistics over (signed) permutations. The results in type B allow us to find a family of generalized permutahedra that generates all other deformations via signed Minkowski sums, thus solving a question posed by Ardila, Castillo, Eur, and Postnikov~\citep{acep20submodular}.

Let~$V$ be a finite-dimensional real vector space.
The polytope algebra~$\Pi(V)$ is generated by the classes~$[\p]$ of polytopes~$\p \subseteq V$.
These classes satisfy the following \emph{valuation} and \emph{translation invariance} relations:
$
[\p \cup \q] + [\p \cap \q] = [\p] + [\q]
$ and $
[\p + \{t\}] = [\p],
$
whenever $\p \cup \q$ is a polytope and for every $t \in V$.
The product structure of $\Pi(V)$ is given by the Minkowski sum of polytopes: $[\p] \cdot [\q] = [\p + \q]$.

\begin{figure}[ht]
\[
\left[
\begin{gathered}
\begin{tikzpicture}[scale=.7]
\draw [thick,fill=gray!30!white] (0,0) -- (1,0) -- (.5,.866) -- cycle;
\end{tikzpicture}
\end{gathered}
\right]
\cdot\left[
\begin{gathered}
\begin{tikzpicture}[scale=.7]
\draw [very thick,fill=gray!30!white] (0,0) -- (1,0);
\end{tikzpicture}
\end{gathered}
\right]
\quad = \quad 
\left[
\begin{gathered}
\begin{tikzpicture}[scale=.7]
\draw [thick,fill=gray!30!white] (0,0) -- (2,0) -- (1.5,.866) -- (.5,.866) -- cycle;
\end{tikzpicture}
\end{gathered}
\right]
\quad = \quad 
\left[
\begin{gathered}
\begin{tikzpicture}[scale=.7]
\draw [thick,fill=gray!30!white] (0,0) -- (1,0) -- (1.5,.866) -- (.5,.866) -- cycle;
\end{tikzpicture}
\end{gathered}
\right]
+
\left[
\begin{gathered}
\begin{tikzpicture}[scale=.7]
\draw [thick,fill=gray!30!white] (0,0) -- (1,0) -- (.5,.866) -- (-.5,.866) -- cycle;
\end{tikzpicture}
\end{gathered}
\right]
-
\left[
\begin{gathered}
\begin{tikzpicture}[scale=.7]
\draw [thick,fill=gray!30!white] (1,0) -- (1.5,.866) -- (.5,.866) -- cycle;
\end{tikzpicture}
\end{gathered}
\right]
\]
\caption{Different expressions for the class of the trapezoid above in the polytope algebra $\Pi(\RR^2)$.}
\end{figure}

For a fixed polytope $\p \subseteq V$, $\Pi(\p)$ denotes the subalgebra of $\Pi(V)$ generated by classes of \emph{deformations} of $\p$; see~\citep{mcmullen93simple}.
We are particularly interested in the case where $\p$ is a zonotope corresponding to a linear hyperplane arrangement $\arr$.
In this case, let $\tits[\arr]$ denote the Tits algebra of $\arr$, see Section~\ref{s:Tits}.
It is linearly generated by the elements $\B{H}_F$ as $F$ runs through the faces of the arrangement.
The following is a central result of this paper.

\newtheorem*{t:action}{Theorem~\ref{t:mod_str}}
\begin{t:action}
Let $\p$ and $\arr$ be as above.
The algebra $\Pi(\p)$ is a right $\tits[\arr]$-module under the following action:
\[
[\q] \cdot \B{H}_F := [\q_v],
\]
where $v \in \relint(F)$ and $\q_v$ denotes the face of $\q$ \emph{maximal in the direction} of $v$.
Moreover, the action of $\B{H}_F$ is an endomorphism of graded algebras.
\end{t:action}

In the particular case of the permutahedron and the braid arrangement, the compatibility between the algebra structure and the action of the Tits algebra is related to the Hopf monoid structure of Aguiar and Ardila.
Similar results for the Hopf monoid of extended generalized permutahedra were independently obtained by Ardila and Sanchez in~\citep{as20valGP}.

\newtheorem*{t:Hopf}{Theorem~\ref{t:McGP}}
\begin{t:Hopf}
The species~$\Pi$ defined by $\Pi[I] = \Pi(\pi_I)$, where $\pi_I \subseteq \RR^I$ is the standard permutahedron, is a Hopf monoid quotient of ${\sf GP}$.
\end{t:Hopf}

We embark on further understanding the module structure of $\Pi(\frakz)$ when $\frakz$ is a zonotope corresponding to a hyperplane arrangement $\arr$.
The decomposition of $\Pi(\frakz) = \bigoplus_r \Xi_r(\frakz)$ into its graded components will play an essential role here. McMullen characterized $\Xi_r(\frakz)$ as the eigenspace of the dilation morphism $\delta_\lambda$, defined by $\delta_\lambda[\p] = [\lambda \p]$, with eigenvalue $\lambda^r$ for any positive $\lambda \neq 1$.
Theorem~\ref{t:mod_str} then implies that each graded component is a $\tits[\arr]$-submodule.

The simple modules over~$\tits[\arr]$ are one-dimensional and indexed by the flats of the arrangement~\citep[Chapter 9]{am17}.
Given a module~$M$ over~$\tits[\arr]$, the number of copies of the simple module associated with the flat~$\rmX$ that appear as a composition factor of~$M$ is~$\eta_\rmX(M)$.
We propose that studying these algebraic invariants for the modules $\Xi_r(\frakz)$ yields important geometric and combinatorial information of the deformations of $\frakz$.
In Sections~\ref{s:braid} and \ref{s:typeB}, we do this for the algebra of deformations of the permutahedron and the type B permutahedron, respectively.
The main results in these sections relate the invariants $\eta_\rmX$ with statistics over (signed) permutations:

\newtheorem*{t:main}{Theorem~\ref{t:dims_simul_e-spaces_A}}
\begin{t:main}
For any flat~$\rmX$ of the braid arrangement in~$\RR^d$ and~$r=0,1,\dots,d-1$,
\[
\eta_\rmX(\Xi_r(\pi_d)) = \big|\{ \sigma \in \frakS_d \,:\, \supp(\sigma) = \rmX,\ \exc(\sigma) = r \}\big|.
\]
\end{t:main}

\newtheorem*{t:mainB}{Theorem~\ref{t:dims_simul_e-spaces_B}}
\begin{t:mainB}
For any flat~$\rmX$ of type B Coxeter arrangement in~$\RR^d$ and~$r=0,1,\dots, d$,
\[
\eta_\rmX(\Xi_r(\pi^B_d)) = \big|\{ \sigma \in \frakB_d \,:\, \supp(\sigma) = \rmX,\ \exc_B(\sigma) = r \}\big|.
\]
\end{t:mainB}

This surprising relation arises from the remarkable results of McMullen and Brenti that we explain now. McMullen~\citep{mcmullen93simple} proved that when $\p$ is a simple polytope, just like the (type B) permutahedron, the dimension of the graded components $\Xi_r(\p)$ are determined by the $h$-numbers of $\p$. On the other hand, building on top of work by Björner~\citep{bjorner84}, Brenti proved that the $h$-polynomial of the Coxeter permutahedron associated with any Coxeter group is the corresponding Eulerian polynomial~\citep[Theorem 2.3]{brenti94cox-eul}.

Work of Postnikov~\citep{postnikov09}, and of Ardila, Benedetti, and Doker~\citep{abd10volume} show that any generalized permutahedron in $\RR^d$ can be written as the \emph{signed Minkowski sum} of the faces of the standard simplex $\Delta_{[d]} = \Conv\{ e_j \,:\, j \in [d]\}$. As a consequence of Theorem~\ref{t:dims_simul_e-spaces_B}, we show that any such set of generators for type B generalized permutahedra contains at least $2^{d-1}$ full dimensional polytopes (see Proposition~\ref{p:min_num_gens}). We manage to obtain a family of generators that attains this minimum.

\newtheorem*{t:gensB}{Theorem~\ref{t:B-gens}}
\begin{t:gensB}
Every type B generalized permutahedron in $\RR^d$ can be written uniquely as a signed Minkowski sum of the simplices $\Delta_S$ and $\Delta^0_S$ for \emph{special involution-exclusive} subsets $S \subseteq [\pm d]$.
\end{t:gensB}

Unlike the standard simplex and its faces in the type A case, this collection of generators is not invariant under the action of the corresponding Coxeter group. However, using the set of generators in the previous theorem, we are able to find a different collection of generators that is invariant under the action of $\frakB_d$, at the cost of including twice as many full-dimensional polytopes.

\newtheorem*{t:gensBsym}{Theorem~\ref{t:B-sym-gens}}
\begin{t:gensBsym}
Every type B generalized permutahedron in $\RR^d$ can be written uniquely as a signed Minkowski sum of the simplices $\Delta^0_S$ for involution-exclusive subsets $S$.
\end{t:gensBsym}

This document is organized as follows. We review McMullen's construction in Section~\ref{s:polytope}.
In Section~\ref{s:Tits}, we recall the definition of the Tits algebra of a hyperplane arrangement and some of its representation theoretic properties. Characteristic elements and Eulerian idempotents are also reviewed in this section. In Section~\ref{s:module}, we begin the study of the polytope algebra of deformations of a zonotope as a module over the Tits algebra of the corresponding hyperplane arrangement, which is the central construction for this paper. We study the polytope algebra of generalized permutahedra in Section~\ref{s:braid}. In particular, we provide a conjectural basis of simultaneous eigenvectors for the action of the \emph{Adams element} and positive dilations on the module~$\Pi(\pi_d)$. Section~\ref{s:typeB} contains the analogous results for type B. We also give explicit sets of \emph{signed Minkowski generators} for type B generalized permutahedra; one shows that the lower bound on the number of full-dimensional polytopes in such a collection is tight, and the other is invariant under the action of $\frakB_d$. In Section~\ref{s:hopf} we prove that the valuation and translation invariance relations are compatible with the Hopf monoid structure of {\sf GP}. Section~\ref{s:remarks} contains some final remarks and questions. We begin with some preliminaries.

\subsection{Preliminaries}

Let~$V$ be a real vector space of dimension~$d$ endowed with an inner product~$\langle \,\cdot\, , \cdot\, \rangle$, and let ${\bf 0} \in V$ denote its zero vector.
For a polytope~$\p \subseteq V$ and a vector~$v \in V$, let~$\p_v$ denote the face of~$\p$ maximized in the direction~$v$.
That is,
\[
\p_v := \{ p \in \p \,:\, \langle p , v \rangle \geq \langle q , v \rangle \text{ for all } q \in \p\}.
\]
The (outer) \textbf{normal cone} of a face~$\f$ of~$\p$ is the polyhedral cone
\[
N(\f,\p) := \{ v \in V  \,:\,  \f \leq \p_v\} = \overline{\{ v \in V \,:\, \f = \p_v\}},
\]
and the \textbf{normal fan} of~$\p$ is the collection
$
\face_\p = \{ N(\f,\p) \,:\, \f \leq \p\}
$
of all normal cones of faces of $\p$.
There is a natural order-reversing correspondence between faces of~$\p$ and cones in~$\face_\p$.
For~$F \in \face_\p$, we let~$\p_F \leq \p$ denote the face whose normal cone is~$F$.
That is $\p_F = \p_v$ for any $v$ in the relative interior of $F$.

\begin{figure}[ht]
\[
\begin{gathered}
\begin{tikzpicture}[->]
\draw [blue1] (0,0) -- (-1,0) node [left] {\small$w$};
\draw [red1] (0,0) -- (0,1) node [above] {\small$v$};
\end{tikzpicture}
\end{gathered}
\hspace*{.1\linewidth}
\begin{gathered}
\begin{tikzpicture}
\draw [thick, fill=gray!30!white] (-1,0) -- (1,0) -- (1.5,.866) -- (1,1.732) -- (0,1.732) -- (-1,0);
\draw [line width=0.7mm, red1] (1,1.732) -- (0,1.732);
\node at (.5,.866) {\small$\p$};
\node [red1] at (.5,2) {\small$\p_v$};
\node [circle, inner sep = 1pt, fill = blue1, color = blue1, draw] at (-1,0) {};
\node [blue1] at (-1.3,0) {\small$\p_w$};
\end{tikzpicture}
\end{gathered}
\hspace*{.1\linewidth}
\begin{gathered}
\begin{tikzpicture}
\newdimen\R
\R=1.2cm \draw [fill, color = blue1, opacity = .3] (0,0) -- (150:\R) arc (150:270:\R);
\draw [fill, color = gray!20!white] (0,0) -- (-90:\R) arc (-90:150:\R);
\draw [<->] (30:\R) -- (0,0) -- (-90:\R);
\draw [<->] (150:\R) -- (-30:\R);
\draw [line width=0.4mm, red1, ->] (0,0) -- (90:\R) node [above] {\small $N(\p_v,\p)$};
\node [blue1] at (200:1.5\R) {\small $N(\p_w,\p)$};
\end{tikzpicture}
\end{gathered}
\]
\caption{A 2-dimensional polytope~$\p$ and two of its faces~$\p_v,\p_w$ maximized in directions~$v,w$, respectively.
On the right, the normal fan~$\face_\p$ and the normal cones corresponding to the faces~$\p_v,\p_w$ of~$\p$.}
\end{figure}
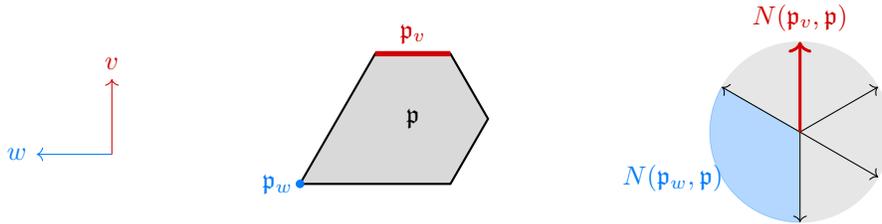

Recall that a fan~$\face$ \textbf{refines}~$\face'$ if every cone in~$\face'$ is a union of cones in~$\face$. We say that a polytope~$\q$ is a \textbf{deformation} of~$\p$ if~$\face_\p$ refines~$\face_\q$.
The \textbf{Minkowski sum} of two polytopes~$\p,\q \subseteq V$ is the polytope~$\p + \q := \{ p + q \,:\, p \in \p,\, q \in \q \}$.
We say that a polytope~$\q$ is a \textbf{Minkowski summand} of~$\p$ if~$\p = \q + \q'$ for some polytope~$\q'$.
The normal fan of~$\p + \q$ is the common refinement of~$\face_\p$ and~$\face_\q$. 
Hence,~$\face_\p$ refines the normal fan of any of its Minkowski summands, and the Minkowski sum of deformations of $\p$ is again a deformation of $\p$.

The~$f$-polynomial of a~$d$-dimensional polytope~$\p$ is
\[
f(\p,z) = \sum_{i=0}^d f_i(\p) z^i,
\]
where~$f_i(\p)$ is the number of~$i$-dimensional faces of~$\p$.
The~$h$-polynomial of~$\p$ is defined by
\[
h(\p,z) = \sum_{i=0}^d h_i(\p) z^i = f(\p,z-1).
\]
The sequences~$(f_0(\p),\dots,f_d(\p))$ and~$(h_0(\p),\dots,h_d(\p))$ are the~$f$-vector and~$h$-vector of~$\p$, respectively.
These polynomials behave nicely with respect to the Cartesian product of polytopes:
\[
f(\p \times \q,z) = f(\p,z)f(\q,z)
\qquad
h(\p \times \q,z) = h(\p,z)h(\q,z),
\]
where $\p \times \q = \{ (p,q) \,:\, p \in \p,\, q \in \q \}$.

\section{The polytope algebra}\label{s:polytope}

We briefly review the construction of McMullen's polytope algebra~\citep{mcmullen89} and its main properties. A hands on introduction to this topic can be found in the survey \citep{castillo19}.
The subalgebra relative to a fixed polytope~\citep{mcmullen93simple} is studied at the end of this section.

\subsection{Definition and structure theorem}

As an abelian group, the \textbf{polytope algebra}~$\Pi(V)$ is generated by elements~$[\p]$, one for each polytope~$\p \subseteq V$.
These generators satisfy the relations
\begin{equation}\label{eq:valuation_property}
[\p \cup \q] + [\p \cap \q] = [\p] + [\q],
\end{equation}
whenever~$\p$,~$\q$ and~$\p \cup \q$ are polytopes; and
\begin{equation}\label{eq:translation_property}
[\p + \{t\}] = [\p],
\end{equation}
for any polytope~$\p$ and translation vector~$t \in V$.
These relations are referred as the \textbf{valuation property} and the \textbf{translation invariance property}, respectively.
The group~$\Pi(V)$ is endowed with a commutative product defined on generators by means of the Minkowski sum
\[
[\p] \cdot [\q] := [ \p + \q ].
\]
It readily follows from~\eqref{eq:translation_property} that the class of a point~$1 := [\{{\bf 0}\}]$ is the unit of~$\Pi(V)$.

For each scalar $\lambda \in \RR$, the \textbf{dilation morphism}~$\delta_\lambda:\Pi(V) \rightarrow \Pi(V)$ is defined on generators by
$
\delta_\lambda [\p] = [\lambda \p].
$
Recall that for any subset~$S \subseteq V$ and~$\lambda \in \RR$, $\lambda S := \{\lambda v \,:\, v \in S\}$.
One can easily verify that~$\delta_\lambda$ preserves the valuation~\eqref{eq:valuation_property} and translation invariance~\eqref{eq:translation_property} relations, and that it defines a morphism of rings.
The main structural result for the polytope algebra is the following.

\begin{theorem}[{\citep[Theorem 1]{mcmullen89}}]
The commutative ring~$\Pi(V)$ is almost a graded~$\RR$-algebra, in the following sense:
\begin{enumerate}[i.]
\item as an abelian group,~$\Pi(V)$ admits a direct sum decomposition
\[
\Pi(V) = \Xi_0(V) \oplus \Xi_1(V) \oplus \cdots \oplus \Xi_d(V);
\]
\item under multiplication,
$
\Xi_r(V) \cdot \Xi_s(V) = \Xi_{r+s}(V),
$
with~$\Xi_r(V) = 0$ for~$r > d$;
\item~$\Xi_0(V) \cong \ZZ$, and for~$r=1,\dots,d$,~$\Xi_r(V)$ is a real vector space;
\item the product of elements in~$\bigoplus_{r \geq 1} \Xi_r(V)$ is bilinear.
\item the dilations~$\delta_\lambda$ are algebra endomorphisms, and for~$r = 0,1,\dots,d$, if~$x \in \Xi_r(V)$ and~$\lambda \geq 0$, then
$
\delta_\lambda x = \lambda^r x.
$
\end{enumerate}
\end{theorem}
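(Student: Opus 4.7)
The plan is to prove McMullen's structure theorem in three stages, the first being by far the most technical.

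\textbf{Stage 1 (nilpotency of the augmentation ideal).} The augmentation $\epsilon : \Pi(V) \to \ZZ$, $[\p] \mapsto 1$, is well-defined because both defining relations preserve it. Set $I := \ker \epsilon$; note that $I$ is generated as an abelian group by the elements $[\p] - 1$. The technical heart of the theorem is the nilpotency statement $I^{d+1} = 0$, where $d = \dim V$. I would prove this by induction on $d$. The base case $d = 0$ is trivial since every polytope is a point, killing $[\p] - 1$ outright. For the inductive step, given polytopes $\p_0, \dots, \p_d$, one shows that $([\p_0] - 1)\cdots([\p_d] - 1) = 0$ by slicing $\p_0$ with a generic hyperplane and applying the valuation relation, using translation invariance to identify the resulting lower-dimensional pieces with classes living in the polytope algebra of a hyperplane in $V$, so that the induction hypothesis applies. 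This nilpotency step is the main obstacle.

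\textbf{Stage 2 (logarithms and the dilation action).} Once nilpotency is in hand, tensor with $\RR$; the augmentation ideal becomes an $\RR$-vector space on which $[\p] - 1$ is nilpotent of uniformly bounded order. Define
\[
\log[\p] := \sum_{k=1}^{d} \frac{(-1)^{k+1}}{k}\,([\p] - 1)^k,
\]
a finite sum in $I_\RR$. Because Minkowski sum corresponds to multiplication in $\Pi(V)$, one has $\log([\p]\cdot[\q]) = \log[\p] + \log[\q]$. The dilation $\delta_\lambda$ preserves both relations (direct check) and is multiplicative since $\lambda(\p + \q) = \lambda\p + \lambda\q$, hence is an algebra endomorphism. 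Applying $\log$ to the identity $[(\lambda + \mu)\p] = [\lambda\p]\cdot[\mu\p]$ valid for $\lambda,\mu \geq 0$ yields Cauchy's functional equation $\log[(\lambda+\mu)\p] = \log[\lambda\p] + \log[\mu\p]$. Since $\log[\lambda\p]$ is a polynomial in $\lambda$ of degree at most $d$ (by nilpotency), the only solutions of Cauchy's equation are linear, so $\log[\lambda\p] = \lambda \log[\p]$ for $\lambda \geq 0$.

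\textbf{Stage 3 (graded decomposition).} Set $\Xi_0(V) := \ZZ \cdot 1$ and, for $r \geq 1$,
\[
\Xi_r(V) := \{\, x \in \Pi(V) : \delta_\lambda x = \lambda^r x \text{ for all } \lambda > 0 \,\}.
\]
By Stage 2, $\log[\p] \in \Xi_1(V)$, and the multiplicativity of $\delta_\lambda$ gives $\Xi_r(V) \cdot \Xi_s(V) \subseteq \Xi_{r+s}(V)$. The decomposition $I_\RR = \Xi_1(V) \oplus \cdots \oplus \Xi_d(V)$ follows by a Vandermonde/Lagrange interpolation argument: on $I_\RR$ the operator $\delta_\lambda$ depends polynomially on $\lambda$ with degree at most $d$, so writing $\delta_\lambda = \sum_{r \geq 1} \lambda^r \pi_r$ for projectors $\pi_r$ extracted by interpolation across $d$ distinct positive values of $\lambda$ yields the claimed spectral decomposition. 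Nilpotency $I^{d+1} = 0$ forces $\Xi_r(V) = 0$ for $r > d$, and property (v) is built in. Finally, property (iv) (bilinearity on $\bigoplus_{r \geq 1} \Xi_r(V)$) follows because on this piece the $\RR$-vector space structure is precisely the one induced by the exponential bijection with $I_\RR$, under which the Minkowski product pulls back to the truncated polynomial product.
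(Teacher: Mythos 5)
This theorem is not proved in the paper at all: it is quoted from McMullen~\citep{mcmullen89} with a citation, and the surrounding text only records the ingredients (the nil ideal $Z_1$, its $\QQ$- and then $\RR$-vector space structure, the $\log$/$\exp$ maps, and the eigenspace characterization of $\Xi_r$). So your sketch has to be measured against McMullen's argument, and at the two decisive points it has genuine gaps. In Stage 1, the proposed induction does not descend: slicing $\p_0$ by a generic hyperplane $H$ gives, via the valuation relation, $[\p_0]-1=([\p_0^+]-1)+([\p_0^-]-1)-([\p_0\cap H]-1)$, and the two half-pieces $\p_0^{\pm}$ are still $d$-dimensional, so the inductive hypothesis (an algebra attached to a hyperplane of $V$) never applies to them and no well-founded induction results. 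What is actually needed is the stronger local statement quoted later in this section as Lemma~13 of McMullen, namely $([\p]-1)^r=0$ for $r>\dim\p$, and its proof rests on a real geometric input: a dissection of the dilate $n\p$ into translated (half-open) cells indexed by faces, which yields $[\p]^n=[n\p]=\sum_{r\le\dim\p}\binom{n}{r}\,y_r$ with $y_r$ independent of $n$, whence nilpotency follows from the linear independence of the polynomials $\binom{n}{r}$ in $n$. Generic slicing plus translation invariance does not produce anything of this kind.

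Stages 2 and 3 are then circular at the key step. You justify both the linearity of solutions of Cauchy's equation and the Lagrange-interpolation projectors by asserting that $\log[\lambda\p]$, equivalently $\delta_\lambda$ restricted to the augmentation ideal, depends polynomially on the real parameter $\lambda$ ``by nilpotency''. Nilpotency gives no such thing: it bounds powers of the element $[\lambda\p]-1$ for each fixed $\lambda$, but says nothing about how $[\lambda\p]$ varies with $\lambda$; polynomial dependence on $\lambda\ge 0$ is essentially the homogeneity statement (v) you are trying to prove. Additivity (Cauchy's equation) only yields $\log[\lambda\p]=\lambda\log[\p]$ for rational $\lambda\ge 0$, and the passage to all real $\lambda$ is exactly where McMullen has to work, endowing $Z_1$ with a vector space structure ``first over $\QQ$ and then over $\RR$'' by a separate approximation/monotonicity argument. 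Relatedly, tensoring with $\RR$ as you propose proves a statement about $\Pi(V)\otimes\RR$, not the assertion in (iii) that $\Xi_r(V)\subseteq\Pi(V)$ itself is a real vector space. Without the polynomial dependence on $\lambda$ there is no polynomial for your Vandermonde argument to interpolate, so the decomposition in Stage 3, and with it properties (iii)--(v), remain unproved.
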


We discuss the definition of the graded components $\Xi_r(V)$ below. The component $\Xi_0(V)$ of degree $0$ is simply the subring of~$\Pi(V)$ generated by~$1$, and thus $\Xi_0(V) \cong \ZZ$. 
Let~$Z_1$ be the subgroup of~$\Pi(V)$ generated by all elements of the form~$[\p] - 1$.
Observe that $\delta_0[\p] = 1$ for every polytope $\p$, so $Z_1 = \ker\delta_0$ is an ideal.
As an abelian group,~$\Pi(V)$ has a direct sum decomposition
$
\Pi(V) = \Xi_0(V) \oplus Z_1.
$
Moreover, $Z_1$ is a nil ideal, since for any~$k$-dimensional polytope $\p$,
\[
([\p] - 1)^r = 0
\quad \text{for } r > k.
\]
This is {\citep[Lemma 13]{mcmullen89}}. McMullen also shows that $Z_1$ has the structure of a vector space (first over $\QQ$ and then over $\RR$). Therefore, we can define inverse maps
\[
\begin{tikzcd}
1 + Z_1 \arrow[rr, "\log" description, bend left] &  & Z_1 \arrow[ll, "\exp" description, bend left]
\end{tikzcd}
\]
by means of their usual power series.
In particular, we can define the \emph{log-class} of a $k$-dimensional polytope $\p$ by
\begin{equation}\label{eq:def-log}
\log[\p] := \log(1 + ([\p]-1)) = \sum_{r = 1}^k \dfrac{(-1)^{r-1}}{r}([\p]-1)^r.
\end{equation}
Using the exponential map, we recover $[\p]$ from $\log[\p]$:
\begin{equation}\label{eq:def-exp}
[\p] = \sum_{r = 0}^k \dfrac{1}{r!} (\log[\p])^r.
\end{equation}
The $\log$ and $\exp$ maps satisfy the standard properties of logarithms and exponentials. In particular,
\begin{equation}\label{eq:log_add}
\log[\p + \q] = \log([\p] \cdot [\q]) = \log[\p] + \log[\q].
\end{equation}

\begin{example}\label{ex:prod_intervals}
Let~$v_1,\dots,v_k \in V$ be nonzero vectors and let~$\frakl_i$ denote the line segment~$\Conv\{ {\bf 0} ,v_i\}$.
Then, $\log[\frakl_i] = [\frakl_i] - 1$.
We will see that the product $\prod_{i=1}^k \log[\frakl_i] \in \Xi_k(V)$ is nonzero if and only if the collection $\{v_1,\dots,v_k\}$ is linearly independent. See Figure~\ref{f:ex-prod-log} for an example with $k = 3$.

Consider the polytope~$\frakz = \frakl_1 + \frakl_1 + \dots + \frakl_k$, a Minkowski sum of segments.
Using the logarithm property~\eqref{eq:log_add}, we get
$
(\log[\frakz])^k 
= ( \sum_{i=1}^k \log[\frakl_i] )^k 
= k! \prod_{i=1}^k \log[\frakl_i].
$
The last equality follows since~$k! \prod \log[\frakl_i]$ is the only square-free term in the expansion of~$( \sum\log[\frakl_i])^k$, and~$(\log[\frakl])^2 = ([\frakl] - 1)^2 = 0$ for any line segment~$\frakl$.
Finally,~$(\log[\frakz])^k \neq 0$ if and only if~$k \leq \dim(\frakz)$, and~$\frakz$ being the sum of~$k$ line segments has dimension at most~$k$, with equality precisely when the vectors~$v_1,\dots,v_k$ are linearly independent.
\end{example}

\begin{figure}[ht]
\[
\begin{gathered}
\begin{tikzpicture}[->,black]
\draw (0,0) -- (0:1) node [right] {\small$v_1$};
\draw (0,0) -- (60:1) node [above right] {\small$v_2$};
\draw (0,0) -- (120:1) node [above left] {\small$v_3$};
\end{tikzpicture}
\end{gathered}
\]
\[
([\frakl_1]-1)([\frakl_2]-1)
= \quad
\begin{gathered}
\begin{tikzpicture}
\draw [fill=gray!30!white] (0,0) -- (1,0) -- (1.5,.866) -- (.5,.866) -- (0,0);
\node [] at (.75,.433) {\small$[\frakl_1][\frakl_2]$};
\end{tikzpicture}
\end{gathered}
\quad - \quad
\begin{gathered}
\begin{tikzpicture}
\node [above] at (-.5,0) {\small$[\frakl_1]$};
\draw [thick] (0,0) -- (-1,0);
\end{tikzpicture}
\end{gathered}
\quad - \quad
\begin{gathered}
\begin{tikzpicture}
\node [above] at (.6,.1) {\small$[\frakl_2]$};
\draw [shift={(.05,.05)}, thick] (0,0) -- (.5,.866);
\end{tikzpicture}
\end{gathered}
\quad + \quad
\bullet
\quad = \quad
\begin{gathered}
\begin{tikzpicture}
\draw [color = white, fill=gray!30!white] (0,0) -- (1,0) -- (1.5,.866) -- (.5,.866) -- (0,0);
\draw [] (.5,.866) -- (0,0) -- (1,0);
\draw [dashed] (.5,.866) -- (1.5,.866) -- (1,0);
\end{tikzpicture}
\end{gathered}
\]
\begin{multline*}
([\frakl_1]-1)([\frakl_2]-1)([\frakl_3]-1)
= \\
\begin{gathered}
\begin{tikzpicture}
\draw [fill=gray!30!white] (0,0) -- (1,0) -- (1.5,.866) -- (1,1.732) -- (0,1.732) -- (-.5,.866) -- (0,0);
\node at (.5,.866) {\small$[\frakl_1][\frakl_2][\frakl_3]$};
\draw [dotted] (-.5,.866) -- (.5,.866) -- (1,1.732);
\draw [dotted] (.5,.866) -- (1,0);
\end{tikzpicture}
\end{gathered}
\quad - \quad
\begin{gathered}
\begin{tikzpicture}
\draw [fill=gray!30!white] (0,0) -- (1,0) -- (.5,.866) -- (-.5,.866) -- (0,0);
\node at (.25,.433) {\small$[\frakl_1][\frakl_3]$};
\draw [shift={(1.1,.05)}, fill=gray!30!white] (0,0) -- (.5,.866) -- (0,1.732) -- (-.5,.866) -- (0,0);
\node [shift={(1.1,.05)}] at (0,.866) {\small$[\frakl_2][\frakl_3]$};
\draw [shift={(-.5,.966)}, fill=gray!30!white] (0,0) -- (1,0) -- (1.5,.866) -- (.5,.866) -- (0,0);
\node [shift={(-.5,.966)}] at (.75,.433) {\small$[\frakl_1][\frakl_2]$};
\end{tikzpicture}
\end{gathered}
\quad + \quad
\begin{gathered}
\begin{tikzpicture}
\node [above] at (-.5,0) {\small$[\frakl_1]$};
\node [above] at (.6,.1) {\small$[\frakl_2]$};
\node [below] at (.6,-.1) {\small$[\frakl_3]$};
\draw [thick] (0,0) -- (-1,0);
\draw [shift={(.05,.05)}, thick] (0,0) -- (.5,.866);
\draw [shift={(.05,-.05)}, thick] (0,0) -- (.5,-.866);
\end{tikzpicture}
\end{gathered}
\quad - \quad
\bullet
\quad = 0
\end{multline*}
\label{f:ex-prod-log}
\caption{Vectors $v_1,v_2,v_3$ lie in the same plane. The vectors $v_1,v_2$ are linearly independent and $\log[\frakl_1]\log[\frakl_2]$ represents \emph{the class of a half-open parallelogram}. In contrast, the product $\log[\frakl_1]\log[\frakl_2]\log[\frakl_3]$ is zero.}
\end{figure}
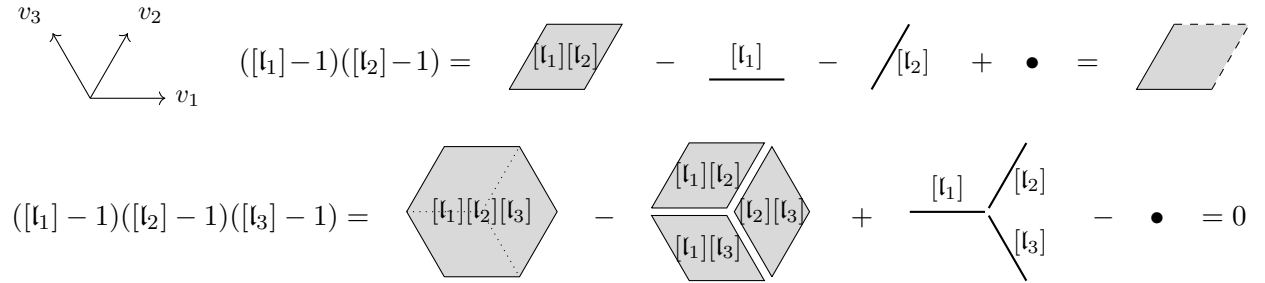

For~$r \geq 1$ let~$\Xi_r(V)$ be the subgroup (or subspace) of~$\Pi(V)$ generated by elements of the form~$(\log[\p])^r$. The following result implies that the sum $\Pi(V) = \Xi_0(V) + \Xi_1(V) + \dots + \Xi_d(V)$ is direct, and characterizes each graded component as the space of eigenvectors for the positive dilations $\delta_\lambda$.

\begin{lemma}[{\citep[Lemma 20]{mcmullen89}}]
Let~$x \in \Pi(V)$ and ~$\lambda > 0$, with~$\lambda \neq 1$.
Then,
\begin{equation}\label{eq:Xi_dilation}
x \in \Xi_r(V)
\qquad\text{if and only if}\qquad
\delta_\lambda x = \lambda^r x.
\end{equation}
\end{lemma}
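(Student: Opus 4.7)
The plan is to identify $\Xi_r(V)$ with the $\lambda^r$-eigenspace of $\delta_\lambda$; the ``if'' and ``only if'' implications will then both follow from the fact that the eigenvalues $\lambda^0, \lambda^1, \ldots, \lambda^d$ are distinct whenever $\lambda > 0$ and $\lambda \neq 1$.

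The first order of business is to check the eigenvalue action on generators $(\log[\p])^r$. Because $\delta_\lambda$ is a ring endomorphism and $\log[\p]$ is a polynomial in the nilpotent element $[\p]-1$, dilations commute with $\log$, giving $\delta_\lambda(\log[\p])^r = (\log[\lambda\p])^r$. The key identity to establish is therefore
$$
\log[\lambda\p] = \lambda\log[\p] \qquad \text{for every polytope } \p \text{ and every } \lambda > 0.
$$
Convexity of $\p$ yields $n\p = \p + \p + \cdots + \p$ for positive integers $n$, so $[n\p] = [\p]^n = \exp(n\log[\p])$, and applying $\log$ on $1 + Z_1$ gives the identity for $\lambda = n$. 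Next, for $\lambda, \mu > 0$ the Minkowski identity $(\lambda+\mu)\p = \lambda\p + \mu\p$ gives $\log[(\lambda+\mu)\p] = \log[\lambda\p] + \log[\mu\p]$, so the map $\lambda \mapsto \log[\lambda\p]$ is additive on $\RR_{>0}$. Combining this additivity with the integer case and the $\QQ$-vector space structure on $Z_1$ extends the identity to positive rationals; the further extension to all positive reals invokes the $\RR$-vector space structure on $Z_1$ constructed earlier by McMullen, which is tailored precisely so that such additive maps become $\RR$-linear.

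Granted the eigenvalue action $\delta_\lambda x = \lambda^r x$ for $x \in \Xi_r(V)$, the lemma follows quickly. The exponential expansion $[\p] = \exp(\log[\p])$ places every generator inside $\Xi_0(V) + \Xi_1(V) + \cdots + \Xi_d(V)$, so this sum equals $\Pi(V)$. Because the $\Xi_r(V)$ are eigenspaces of $\delta_\lambda$ for distinct eigenvalues, they are linearly independent and the sum is direct. Given any $x \in \Pi(V)$ satisfying $\delta_\lambda x = \lambda^r x$, write $x = \sum_s x_s$ uniquely with $x_s \in \Xi_s(V)$; comparing
$$
\sum_s \lambda^s x_s = \delta_\lambda x = \lambda^r x = \sum_s \lambda^r x_s
$$
and using the uniqueness of the decomposition together with the distinctness of $\lambda^0,\ldots,\lambda^d$ forces $x_s = 0$ for $s \neq r$, so $x \in \Xi_r(V)$.

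The main obstacle is the final step in extending $\log[\lambda\p] = \lambda\log[\p]$ from rational to real $\lambda$. Bare additivity yields only $\QQ$-linearity, and the genuine content lies in McMullen's upgrade of $Z_1$ to an $\RR$-vector space, under which this identity is essentially built in. Once that structure is granted, every other ingredient --- the integer case via convexity, the log/exp duality on $1 + Z_1$, and the eigenspace/direct-sum argument --- is a formal consequence of the ring and nilpotency structure already available.
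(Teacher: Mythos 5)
The paper itself offers no proof of this lemma: it is quoted verbatim from McMullen's Lemma 20, so there is no in-paper argument to compare yours against. Your proposal follows what is essentially the standard (McMullen's) route, and its skeleton is sound: show that $\delta_\lambda$ multiplies $\Xi_r(V)$ by $\lambda^r$ (note that, at the level of this paper, this forward implication is already available as item (v) of the quoted structure theorem), use $[\p]=\exp(\log[\p])$ to see that the classes of polytopes lie in $\Xi_0(V)+\cdots+\Xi_d(V)$, and then obtain both the directness of the sum and the converse implication from the distinctness of $1,\lambda,\dots,\lambda^d$, exactly as in your closing paragraph.

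Two caveats. First, your treatment of the key identity $\log[\lambda\p]=\lambda\log[\p]$ for irrational $\lambda$ is stated more strongly than it deserves: additivity of $\lambda\mapsto\log[\lambda\p]$ together with the mere existence of an $\RR$-vector space structure on $Z_1$ does not force $\RR$-homogeneity (Cauchy's functional equation admits additive non-linear solutions), so it is not true in the abstract that ``such additive maps become $\RR$-linear.'' What actually makes the identity hold is that McMullen's real scalar multiplication is constructed using the dilations themselves, so the identity is built into the definition of the structure; deferring to that construction is legitimate here (the paper does the same by citing McMullen), but be aware that within McMullen's development the real structure and the content of Lemma 20 are intertwined, so the deferral skirts, rather than removes, a potential circularity. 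Second, a bookkeeping point: $\Xi_0(V)\cong\ZZ$ is not a vector space until the convention $\Xi_0(V)\otimes\RR$ is adopted, and the scalars $\lambda^s-\lambda^r$ you need to divide by are in general irrational; so in the eigenvector-independence and component-comparison steps you should either invoke that convention or first apply $\delta_\lambda-\mathrm{Id}$ to eliminate the degree-zero component, and you genuinely need the real (not just rational) structure on $Z_1$ there.
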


It is clear by the definition of the graded components $\Xi_r(V)$ that the class of a half-open parallelogram in Figure~\ref{f:ex-prod-log} is in $\Xi_2(V)$. We can also verify this using~\eqref{eq:Xi_dilation} with $\lambda = 2$:
\[
\delta_2 \left(
\begin{gathered}
\begin{tikzpicture}
\draw [color = white, fill=gray!30!white] (0,0) -- (1,0) -- (1.5,.866) -- (.5,.866) -- (0,0);
\draw [] (.5,.866) -- (0,0) -- (1,0);
\draw [dashed] (.5,.866) -- (1.5,.866) -- (1,0);
\end{tikzpicture}
\end{gathered}
\right)
=
\begin{gathered}
\begin{tikzpicture}[scale = 2]
\draw [color = white, fill=gray!30!white] (0,0) -- (1,0) -- (1.5,.866) -- (.5,.866) -- (0,0);
\draw [] (.5,.866) -- (0,0) -- (1,0);
\draw [dashed] (.5,.866) -- (1.5,.866) -- (1,0);
\end{tikzpicture}
\end{gathered}
=
\begin{gathered}
\begin{tikzpicture}
\draw [color = white, fill=gray!30!white] (0,0) -- (1,0) -- (1.5,.866) -- (.5,.866) -- (0,0);
\draw [] (.5,.866) -- (0,0) -- (1,0);
\draw [dashed] (.5,.866) -- (1.5,.866) -- (1,0);

\draw [shift={(1.15,0)}, color = white, fill=gray!30!white] (0,0) -- (1,0) -- (1.5,.866) -- (.5,.866) -- (0,0);
\draw [shift={(1.15,0)}] (.5,.866) -- (0,0) -- (1,0);
\draw [shift={(1.15,0)}, dashed] (.5,.866) -- (1.5,.866) -- (1,0);

\draw [shift={(.6,1.039)}, color = white, fill=gray!30!white] (0,0) -- (1,0) -- (1.5,.866) -- (.5,.866) -- (0,0);
\draw [shift={(.6,1.039)}] (.5,.866) -- (0,0) -- (1,0);
\draw [shift={(.6,1.039)}, dashed] (.5,.866) -- (1.5,.866) -- (1,0);

\draw [shift={(1.75,1.039)}, color = white, fill=gray!30!white] (0,0) -- (1,0) -- (1.5,.866) -- (.5,.866) -- (0,0);
\draw [shift={(1.75,1.039)}] (.5,.866) -- (0,0) -- (1,0);
\draw [shift={(1.75,1.039)}, dashed] (.5,.866) -- (1.5,.866) -- (1,0);
\end{tikzpicture}
\end{gathered}
=
2^2 
\begin{gathered}
\begin{tikzpicture}
\draw [color = white, fill=gray!30!white] (0,0) -- (1,0) -- (1.5,.866) -- (.5,.866) -- (0,0);
\draw [] (.5,.866) -- (0,0) -- (1,0);
\draw [dashed] (.5,.866) -- (1.5,.866) -- (1,0);
\end{tikzpicture}
\end{gathered}
\]

\begin{convention}
As in later work of McMullen~\citep{mcmullen93separation,mcmullen93simple}, we replace~$\Xi_0(V) \cong \ZZ$ with the tensor product~$\Xi_0(V)_\RR := \RR \otimes \Xi_0(V) \cong \RR$ to get a genuine graded~$\RR$-algebra $\Pi(V)_\RR := \Xi_0(V)_\RR \oplus Z_1$.
To simplify notation, we drop the subscript $\RR$ and sometimes wirte $\Xi_r$ and $\Pi$ instead of $\Xi_r(V)$ and~$\Pi(V)_\RR$.
\end{convention}

For an arbitrary vector~$v \in V$, we can define a \textbf{maximization operator}~$\p \mapsto \p_v$ on the space of all polytopes~$\p \subseteq V$.
The next result shows that it induces a well-defined map on~$\Pi(V)$.

\begin{theorem}[{\citep[Theorem 7]{mcmullen89}}]\label{t:direction_morphism}
The map~$\p \mapsto \p_v$ induces an endomorphism $x \mapsto x_v$ on $\Pi(V)$,
defined on generators by
\[
[\p] \mapsto [\p]_v := [\p_v].
\]
This endomorphism commutes with nonnegative dilations.
\end{theorem}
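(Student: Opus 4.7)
The plan is to define a map $\phi_v$ on the free abelian group generated by classes of polytopes via $\phi_v[\p] = [\p_v]$, and then verify that it descends to an algebra endomorphism of $\Pi(V)$ that commutes with $\delta_\lambda$ for $\lambda \geq 0$. Three of the four required compatibilities are immediate from elementary properties of the face-maximization operation. Translation invariance follows from $(\p + \{t\})_v = \p_v + \{t\}$; multiplicativity follows from the classical identity $(\p + \q)_v = \p_v + \q_v$ (a point $p + q$ of $\p + \q$ maximizes $\langle \cdot, v\rangle$ precisely when $p$ and $q$ maximize it on the respective summands); and commutation with $\delta_\lambda$ for $\lambda \geq 0$ follows from $(\lambda\p)_v = \lambda\,\p_v$.

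The substance of the proof lies in verifying the valuation relation
\[
[(\p \cup \q)_v] + [(\p \cap \q)_v] = [\p_v] + [\q_v]
\]
whenever $\p \cup \q$ is a polytope. I would exploit the classical fact that if $\p \cup \q$ is convex then $F := \p \cap \q$ is a common face of both $\p$ and $\q$, and split the argument according to $c_\p := \max_{\p}\langle \cdot, v\rangle$ versus $c_\q := \max_{\q}\langle \cdot, v\rangle$. If $c_\p = c_\q$, then intersecting $\p \cup \q$ with the common supporting hyperplane shows $(\p \cup \q)_v = \p_v \cup \q_v$, a polytope, and a short verification identifies $(\p \cap \q)_v$ with $\p_v \cap \q_v$; the valuation relation in $\Pi(V)$ applied to the polytopes $\p_v$ and $\q_v$ then concludes this case. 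If $c_\p > c_\q$ (the reverse inequality being symmetric), then $(\p \cup \q)_v = \p_v$, and the desired equation reduces to the claim $(\p \cap \q)_v = \q_v$, after which it becomes tautological.

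The main obstacle is this last claim, which I would deduce from the geometric fact $\q_v \subseteq F$. Fix $x^* \in \q_v$ and $y \in \p_v$. By convexity of $\p \cup \q$, the segment $[x^*, y]$ lies entirely in $\p \cup \q$, and along this segment $\langle \cdot, v\rangle$ interpolates linearly from $c_\q$ to $c_\p > c_\q$; every interior point therefore has $v$-value strictly exceeding $c_\q$ and so cannot belong to $\q$, forcing it into $\p$. Taking a limit as the parameter approaches $x^*$ and invoking closedness of $\p$ places $x^*$ itself in $\p \cap \q = F$. Since $F \subseteq \q$ and $x^* \in \q_v$ was arbitrary, this yields $\q_v = F_v$, completing the valuation check and hence the proof.
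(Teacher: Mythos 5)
The paper itself gives no argument for this statement: it is quoted verbatim from McMullen \citep[Theorem~7]{mcmullen89} as background, so there is no internal proof to compare with, and your proposal should be judged as a self-contained substitute for the reference. As such it is essentially correct. The translation, Minkowski-sum, and dilation compatibilities are indeed immediate, and the heart of the matter is the valuation check, which your case split on $c_\p$ versus $c_\q$ handles correctly: in particular the segment-and-limit argument in the case $c_\p>c_\q$ is a clean, complete proof that $\q_v\subseteq \p\cap\q$ and hence $(\p\cap\q)_v=\q_v$, which is exactly what reduces that case to a tautology. This direct verification is more elementary than following McMullen's general machinery, which is a genuine (if modest) gain in self-containedness.

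Two points deserve repair. First, the ``classical fact'' you invoke at the outset --- that convexity of $\p\cup\q$ forces $\p\cap\q$ to be a common face of $\p$ and $\q$ --- is false: take $\p=[0,2]\times[0,1]$ and $\q=[1,3]\times[0,1]$ in $\RR^2$; the union $[0,3]\times[0,1]$ is a polytope, yet $\p\cap\q=[1,2]\times[0,1]$ is full-dimensional and a face of neither. Fortunately you never actually use this statement: your two cases are argued independently of it, so simply delete the claim. Second, in the case $c_\p=c_\q=c$, the ``short verification'' that $(\p\cap\q)_v=\p_v\cap\q_v$ does need one nontrivial input, namely that $\p_v\cap\q_v\neq\emptyset$ (otherwise the maximum of $\langle\,\cdot\,,v\rangle$ over $\p\cap\q$ could a priori be smaller than $c$, and moreover the valuation relation for $\p_v,\q_v$ would not even apply, since a disjoint union $\p_v\cup\q_v$ is not convex). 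This is where convexity of $\p\cup\q$ must enter in that case: $(\p\cup\q)_v=\p_v\cup\q_v$ is a face of the polytope $\p\cup\q$, hence convex and in particular connected, so the two nonempty compact sets $\p_v$ and $\q_v$ cannot be disjoint. Once $\p_v\cap\q_v\neq\emptyset$, both inclusions in $(\p\cap\q)_v=\p_v\cap\q_v$ are immediate and your argument closes as stated.
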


In particular, the morphism~$x \mapsto x_v$ restricts to each graded component~$\Xi_r$.

Lastly, the \textbf{Euler map}~$x \mapsto x^*$ is the linear operator defined on generators by
\begin{equation}\label{eq:Euler_map}
[\p]^* = \sum_{\q \leq \p}(-1)^{\dim(\q)} [\q].
\end{equation}
The sum runs over all nonempty faces~$\q$ of~$\p$.
Up to a sign, the element~$[\p]^*$ corresponds to \emph{the class of the interior} of~$\p$.

\begin{theorem}[{\citep[Theorem 2]{mcmullen89}}]\label{t:euler_dilations}
The Euler map is an involutory automorphism of~$\Pi(V)$.
Moreover, for~$x \in \Xi_r(V)$ and~$\lambda < 0$,
\[
\delta_\lambda x = \lambda^r x^*.
\]
\end{theorem}

\subsection{Subalgebra relative to a fixed polytope}\label{ss:subalgebra_polytope}

Fix a polytope~$\p \subseteq V$.
The \textbf{subalgebra relative to~$\p$}, denoted~$\Pi(\p)$, is the subalgebra of~$\Pi(V)$ generated by classes~$[\q]$ of deformations~$\q$ of~$\p$. 
It is worth pointing out that if $\q,\q'$ are deformations of $\p$ such that $\q \cup \q'$ is a polytope, then both $\q \cup \q'$ and $\q \cap \q'$ are deformations of $\p$.
This follows since, in this case, $\q \cup \q' + \q \cap \q' = \q + \q'$ and Minkowski summands of a deformation of $\p$ are again deformations of $\p$.
Thus, the valuation property~\eqref{eq:valuation_property} is not introducing classes of new polytopes to $\Pi(\p)$.

\begin{remark}
McMullen~\cite{mcmullen93simple} originally defined~$\Pi(\p)$ as the subalgebra generated by the classes of Minkowski summands of~$\p$.
The following result of Shephard~\cite[Section 15.2.7]{grunbaum03convex} implies that both definitions are equivalent:
a polytope $\q$ is a deformation of $\p$ if and only if for small enough $\lambda > 0$, $\lambda \q$ is a Minkowski summand of $\p$.
\end{remark}

The relations between $[\q]$ and $\log[\q]$ in~\eqref{eq:def-log} and~\eqref{eq:def-exp} show that $\Pi(\p)$ is generated by homogeneous elements.
Thus, the grading of~$\Pi(V)$ induces a grading of~$\Pi(\p)$.
We let~$\Xi_r(\p) = \Pi(\p) \cap \Xi_r(V)$ denote the component of $\Pi(\p)$ in degree $r$.
Unlike the full algebra $\Pi(V)$, the subalgebra $\Pi(\p)$ has finite dimension.
McMullen described the dimension of the graded components of $\Pi(\p)$ when $\p$ is a simple polytope.

\begin{theorem}[{\citep[Theorem 6.1]{mcmullen93simple}}]\label{t:simple_h_dimension}
Let~$\p$ be a~$d$-dimensional simple polytope.
Then,
\[\dim_\RR(\Xi_r(\p)) = h_r(\p)\]
for~$r = 0,1,\dots,d$.
\end{theorem}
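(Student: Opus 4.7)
The plan is to follow McMullen's vertex-local approach: construct, for each vertex $u$ of $\p$, a canonical class $\eta_u \in \Xi_{\rho(u)}(\p)$, where $\rho(u)$ counts the ``descending'' edges at $u$ with respect to a generic direction, and show that these elements form a basis of $\Pi(\p)$.

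Fix a vector $v \in V$ not orthogonal to any edge of $\p$. Since $\p$ is simple, each vertex $u$ is incident to exactly $d$ edges; let $D(u)$ be the subset of these on which $\langle \cdot, v \rangle$ strictly decreases away from $u$, and set $\rho(u) := |D(u)|$. The classical shelling interpretation of the $h$-vector for simple polytopes gives
\begin{equation*}
h_r(\p) \;=\; \#\{ u \in \p^{(0)} : \rho(u) = r\},
\end{equation*}
so the theorem reduces to producing a basis of $\Xi_r(\p)$ of exactly this cardinality.

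The construction of $\eta_u$ requires care. A naive product $\prod_{e \in D(u)} \log[\frakl_e^u]$ of log-classes of edge segments fails to yield independent elements at different vertices, since parallel edges at distinct vertices give equal segment classes in $\Pi(V)$. Instead, $\eta_u$ must be built from a genuinely local object at $u$: one iterates the maximization operator $x \mapsto x_{v_u}$ of Theorem~\ref{t:direction_morphism} for $v_u \in \relint N(u,\p)$ close to $v$, producing a ``germ'' at $u$, and then forms a product of log-classes of small Minkowski summands of $\p$ whose normal fans agree with $\face_\p$ only near $N(u,\p)$. Example~\ref{ex:prod_intervals} ensures nontriviality, since the edge directions at a simple vertex are linearly independent.

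Linear independence of $\{\eta_u : \rho(u) = r\}$ would then follow by pairing with weight functionals built from the normal cone $N(u,\p)$ (for instance, via mixed volumes): relative-interior disjointness of distinct vertex normal cones makes the pairing matrix diagonal. The main obstacle is spanning: one must show that every $r$-fold product $\prod_i \log[\q_i]$ of deformation log-classes reduces, modulo the valuation relation~\eqref{eq:valuation_property}, to a combination of the $\eta_u$'s. This is achieved by an inductive ``sweep from the top'' paralleling the shelling: starting with the $v$-maximal vertex and proceeding through the shelling order, at each step one uses the maximization operator to isolate the contribution at the current vertex and simplicity of $\p$ to express this contribution in terms of the local descending-edge classes, while ascending-edge contributions at $u$ get rewritten via~\eqref{eq:valuation_property} in terms of data at vertices later in the shelling order. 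Matching this algebraic reduction with the combinatorial shelling underlying the $h$-vector count is the heart of the proof.
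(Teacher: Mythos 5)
First, a point of comparison: the paper does not prove this statement at all --- it is imported as a black box from McMullen's \emph{On simple polytopes} (Theorem 6.1 there) --- so the only question is whether your argument stands on its own. As written it does not: it is an outline in which the decisive steps are announced rather than carried out. The only complete ingredient, the count $h_r(\p)=\#\{u \,:\, \rho(u)=r\}$ for a generic direction $v$, is a standard fact about simple polytopes and by itself says nothing about $\Xi_r(\p)$.

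The genuine gap is that the classes $\eta_u$ are never constructed, and the ingredients you gesture at are not available in $\Pi(\p)$. For a general simple polytope an edge segment is not a deformation of $\p$ (already for a triangle \emph{no} line segment is a deformation, since a segment's normal fan contains a line and no line is a union of cones of the triangle's fan), so even the ``naive'' products of log-classes of edge segments need not lie in $\Pi(\p)$; and there is no such thing as a Minkowski summand ``whose normal fan agrees with $\face_\p$ only near $N(u,\p)$'': the normal fan of any summand is a global coarsening of $\face_\p$, and for a simplex the only summands are dilates, so nothing vertex-local exists at all. Iterating the maximization operator of Theorem~\ref{t:direction_morphism} does not produce a ``germ'' either: it sends each class to the class of a face, and at a vertex this is the class of a point, which is $1$ and carries no local information. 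With no explicit $\eta_u$, the independence argument (a ``diagonal pairing'' from disjoint interiors of normal cones) cannot be checked --- nothing guarantees the degree-$r$ weights of $\eta_u$ are supported on faces near $u$ --- and the spanning step, which you yourself call the heart of the proof, is simply deferred to an unspecified reduction along a shelling. McMullen's actual argument rests on machinery (his separation/weight representation of classes, the face morphisms $\psi_\f$, and the fact that $\Pi(\p)$ is generated in degree one) that the proposal neither reproduces nor replaces; as it stands this is a heuristic plan, not a proof.
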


Let~$\f$ be a face of~$\p$ and~$v \in \relint\big(N(\f,\p)\big)$.
The maximization operator~$x \mapsto x_v$ defines a morphism of graded algebras
\begin{equation}\label{eq:morphism_to_face}
\psi_\f : \Pi(\p) \rightarrow \Pi(\f)
\end{equation}
that only depends on the face~$\f$ and not on the particular choice of~$v \in \relint\big(N(\f,\p)\big)$.

First observe that this map is well defined; that is,~$[\q_v] \in \Pi(\f)$ for every generator~$[\q]$ of~$\Pi(\p)$.
Indeed, if~$\q$ is a summand of~$\p$, say~$\p = \q + \q'$, then
$
\f = \p_v = \q_v + \q'_v,
$
so~$\q_v$ is a Minkowski summand of~$\f$.
Moreover, since the normal fan of~$\p$ refines that of~$\q$, then~$\q_w = \q_v$ for any other~$w \in \relint\big(N(\f,\p)\big)$.
Therefore the morphism~\eqref{eq:morphism_to_face} only depends on~$\f$.

\begin{theorem}[{\citep[Theorem 2.4]{mcmullen93simple}}]\label{t:simple_surjective}
Let~$\p$ be a simple polytope and~$\f$ a face of~$\p$.
Then, the morphism~$\psi_\f$ is surjective.
\end{theorem}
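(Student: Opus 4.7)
The plan is to prove surjectivity of $\psi_\f$ by induction on the codimension $c := d - \dim(\f)$, first reducing to the case where $\f$ is a facet of $\p$, and then handling the facet case by an explicit construction that exploits the simplicity of $\p$.

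For the inductive reduction, the base case $c = 0$ is trivial since $\psi_\p$ is the identity. Assuming surjectivity for all smaller codimensions, for $c \geq 2$ I would choose a face $\f'$ of $\p$ with $\f \lneq \f' \leq \p$ and $\dim \f' = \dim \f + 1$. Facets of simple polytopes are simple, so $\f'$ is itself simple. The key observation is a functoriality property of the maximization operator: taking $v' \in \relint N(\f',\p)$ and $w$ in the relative interior of the normal cone of $\f$ inside $\f'$ (a cone in $\text{aff}(\f') - \text{aff}(\f')$), one checks that $v := v' + \epsilon w \in \relint N(\f,\p)$ for $\epsilon > 0$ sufficiently small, and that $\q_v = (\q_{v'})_w$ for every deformation $\q$ of $\p$. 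This yields the factorization
\[
\psi_\f = \psi_{\f,\f'} \circ \psi_{\f'},
\]
where $\psi_{\f,\f'} \colon \Pi(\f') \to \Pi(\f)$ is the analogous maximization morphism for $\f$ viewed as a facet of $\f'$. By the inductive hypothesis, $\psi_{\f'}$ is surjective (codimension $c-1 < c$ in $\p$), and by the codimension-one case applied to the simple polytope $\f'$, so is $\psi_{\f,\f'}$; thus $\psi_\f$ is surjective.

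It then remains to settle the codimension-one case. Here $\f$ is a facet of simple $\p$, and $N(\f,\p) = \RR_{\geq 0} v$ is a single ray. Given any deformation $\rr$ of $\f$, I would construct a deformation $\q$ of $\p$ with $\q_v$ equal to a translate of $\lambda\rr$ for $\lambda > 0$ small enough that $\lambda\rr$ is a Minkowski summand of $\f$. Using that at each vertex $u \in \f$ the $d$ edges of $\p$ at $u$ split as the $d-1$ edges of $\f$ at $u$ together with a single outward-pointing edge opposite to $v$, I would specify $\q$ by its edge lengths: use the edge lengths of $\lambda\rr$ on the $\f$-edges of $\p$ (globally extended via the closing conditions at the 2-faces of $\f$), and on the remaining outward edges use the lengths coming from a small dilate of $\p$. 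Then $\psi_\f([\q]) = [\lambda\rr]$, and since $\delta_\lambda$ commutes with $\psi_\f$ and acts invertibly on $\Pi$ for $\lambda > 0$, the class $[\rr]$ is recovered as $\psi_\f(\delta_{1/\lambda}[\q])$. As the classes $[\rr]$ for $\rr$ a deformation of $\f$ generate $\Pi(\f)$ as an algebra, this establishes surjectivity.

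The hardest step will be verifying that the edge-length assignment in the codimension-one construction actually defines a polytope whose normal fan is refined by $\face_\p$; concretely, that the closing conditions at the 2-faces of $\p$ not contained in $\f$ are satisfied by the combined prescription. This is precisely where the simplicity of $\p$ is essential: the simplicial structure at each vertex of $\p$ provides enough degrees of freedom in the edge-length parameter space that the outward-edge extension by a small dilate of $\p$ resolves the extra closing conditions, while leaving the $\f$-edge data unchanged.
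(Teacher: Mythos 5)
First, a caveat: the paper does not prove this statement at all --- it is quoted verbatim from McMullen \citep[Theorem 2.4]{mcmullen93simple} --- so there is no in-paper argument to compare with, and your proposal has to stand on its own. Your inductive reduction to the case of a facet is fine (it uses the same standard facts about iterated maximization, $(\q_{v'})_w=\q_{v'+\epsilon w}$ and faces of simple polytopes being simple, that the paper invokes elsewhere), and the final bookkeeping with $\delta_{1/\lambda}$ is harmless. The genuine gap is the codimension-one construction, which is the entire content of the theorem, and as written it is not just incomplete but false. In a simple $d$-polytope $\p$ with facet $\f$, every edge $e\subseteq\f$ lies in exactly one $2$-face $G$ of $\p$ not contained in $\f$, and $G\cap\f=e$, so no other edge of $G$ lies in $\f$. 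Under your prescription the edges of $G$ other than $e$ receive the lengths of $\epsilon\p$ while $e$ receives $\lambda\ell_{\rr}(e)$; subtracting $\epsilon$ times the closing relation of $\p$ around $G$, the closing condition for $G$ reduces to $(\lambda\ell_{\rr}(e)-\epsilon\ell_{\p}(e))u_e=0$, i.e.\ it holds only if $\lambda\ell_{\rr}(e)=\epsilon\ell_{\p}(e)$. Since this must hold for \emph{every} edge of $\f$ with the same $\lambda,\epsilon$, your edge-length assignment closes up only when $\rr$ has the edge lengths of a dilate of $\f$, hence is a translate of a dilate of $\f$. So the construction produces nothing beyond dilates of $\f$, and the concluding claim that ``the outward-edge extension by a small dilate of $\p$ resolves the extra closing conditions while leaving the $\f$-edge data unchanged'' cannot be repaired in that form: the lengths of the edges outside $\f$ must depend nontrivially on $\rr$, and such changes propagate through all the $2$-face relations of $\p$.

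What is missing is precisely the extension step that makes McMullen's theorem nontrivial: one must extend the data of $\rr$ from the star of the ray $N(\f,\p)$ to all of $\face_\p$, and simplicity enters through the fact that $\face_\p$ is a \emph{simplicial} fan, not merely through ``degrees of freedom at each vertex.'' One workable route: a piecewise-linear function on a simplicial fan is freely determined by its values on the rays, so the support function of $\lambda\rr$ (defined on the cones containing $N(\f,\p)$) extends to a piecewise-linear function $g$ on $\face_\p$; since the deformation cone of a simple polytope is full-dimensional, $g+h_{\epsilon\p}$ is the support function of a deformation $\q$ of $\p$ for suitable $\epsilon,\lambda$, whose face in direction $v$ is (a translate of) $\epsilon\f+\lambda\rr$; one then removes the unwanted summand at the level of log classes, using $\log[\q_v]=\epsilon\log[\f]+\lambda\log[\rr]$, that $[\f]=\psi_\f([\p])$ already lies in the image, and that $\Pi(\f)$ is generated by $\Xi_1(\f)$. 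Some argument of this kind (or an equivalent careful propagation of edge lengths) has to be supplied; without it, the key step of your proof is unsupported and, in the specific form proposed, incorrect.
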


\section{The Tits algebra of a linear hyperplane arrangement}\label{s:Tits}

\subsection{Basic definitions}

Let~$\arr$ be a linear hyperplane arrangement in~$V$.
An arbitrary intersection of hyperplanes in $\arr$ is a \textbf{flat} of the arrangement.
The set~$\fflat[\arr]$ of flats of $\arr$ is a lattice with maximum~$\top:= V$ and minimum~$\perp$, the intersection of all hyperplanes in~$\arr$.
For a flat $\rmX \in \fflat[\arr]$, the \textbf{arrangement under~$\rmX$} is the following hyperplane arrangement in ambient space~$\rmX$:
\[
\arr^\rmX = \{\rmH\cap\rmX \,:\, \rmH\in\arr,\, \rmX\not\subseteq\rmH \}.
\]
The \textbf{characteristic polynomial} of~$\arr$ is
\[
\cpoly(\arr,t) := \sum_{\rmX \in \fflat[\arr]} \,\mu(\rmX,\maxflat) \,t^{\dim(\rmX)},
\]
where~$\mu(\rmX,\maxflat)$ denotes the Möbius function of~$\fflat[\arr]$.
It is a monic polynomial of degree~$\dim(V)$.

The hyperplanes in~$\arr$ split~$V$ into a collection~$\face[\arr]$ of polyhedral cones called \textbf{faces} of $\arr$.
Explicitly, the complement in~$V$ of the union of hyperplanes in~$\arr$ is the disjoint union of open subsets of~$V$; and~$\face[\arr]$ is the collection of the closures of these regions together with all their faces.
$\face[\arr]$ is a poset under containment, its maximal elements are called \textbf{chambers}.
The \textbf{central face}~$O$ is the minimum element of $\face[\arr]$, it coincides with the minimal flat~$\perp$ of the arrangement.
See Figure~\ref{f:ex_arr} for an example.

The \textbf{support} of a face~$F$ is the smallest flat~$\supp(F)$ containing it.
Equivalently, it is the linear span of~$F$.
The support map
\begin{equation}\label{eq:supp-map}
\supp:\face[\arr]\to\fflat[\arr]
\end{equation}
is surjective and order preserving.

\begin{figure}[ht]
\[
\begin{array}{C}
\begin{gathered}
\begin{tikzpicture}[<->,black]
\node [] at (-2.2,1.3) {$\arr$};
\draw (-90:1.5) -- (90:1.5) node [above] {\small$\rmH_1$};
\draw (150:1.5) -- (-30:1.5) node [below right] {\small$\rmH_3$};
\draw (210:1.5) -- (30:1.5) node [above right] {\small$\rmH_2$};
\draw [fill] (0,0) circle (.1);
\end{tikzpicture}
\end{gathered}
\\ \phantom{a}\\
\begin{gathered}
\begin{tikzpicture}[scale=.9]
\node [fill=white] at (-3.5,2) {$\face[\arr]$};
\draw (-2.5,1) -- (2.5,2);
\foreach \x in {-2.5 , -1.5 , -.5, .5 , 1.5} {
\draw[preaction={draw=white, -,line width=6pt}] (\x+1,1) -- (\x,2);
}
\foreach \x in {-2.5 , -1.5 , -.5, .5 , 1.5 , 2.5} {
\draw (0,0) -- (\x,1) -- (\x,2);
\draw[preaction={draw=white, -,line width=6pt}] (\x,1) -- (\x,2);
\foreach \y in {1,2} {
\node [fill=white] at (\x,\y) {};
\node [circle, inner sep=1.5pt,fill,draw] at (\x,\y) {};
}
}
\node [fill=white] at (0,0) {\small~$O$};
\end{tikzpicture}
\end{gathered}
\hspace*{.1\linewidth}
\begin{gathered}
\begin{tikzpicture}[scale=.9]
\draw (0,0) -- (-1.4,1) -- (0,2);
\draw (0,0) -- (0,1) -- (0,2);
\draw (0,0) -- (1.4,1) -- (0,2);
\node [fill=white] at (-2,2) {$\fflat[\arr]$};
\node [color=blue1,fill=white] at (2,2) {\small~$\mu(\bdot,\top)$};
\node [fill=white] at (0,0) {\small~$\perp^{\blue{2}}$};
\node [fill=white] at (-1.2,1) {\small~${\rmH_1}^{\blue{-1}}$};
\node [fill=white] at (0.05,1) {\small~${\rmH_2}^{\blue{-1}}$};
\node [fill=white] at (1.2,1) {\small~${\rmH_3}^{\blue{-1}}$};
\node [fill=white] at (0,2) {\small~$\top^{\blue{1}}$};
\end{tikzpicture}
\end{gathered}
\end{array}
\]
\caption{A 2-dimensional arrangement~$\arr$ together with
its poset of faces (left) and lattice of flats (right).
The Möbius function of the lattice of flats is shown in blue.
The characteristic polynomial of~$\arr$ is~$\cpoly(\arr,t) = t^2 - 3t + 2$.
}
\label{f:ex_arr}
\end{figure}
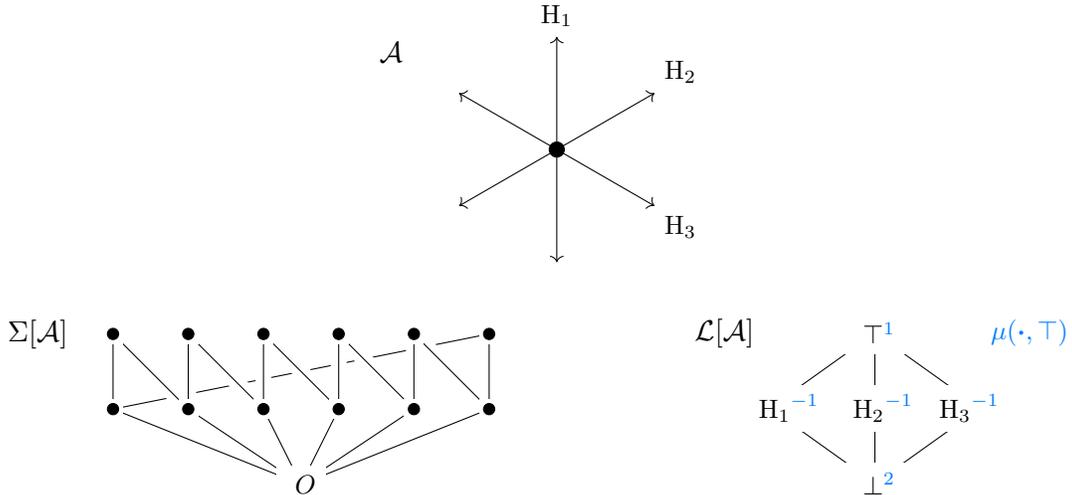

\subsection{The Tits algebra and characteristic elements}

The set~$\face[\arr]$ has the structure of a monoid under the \textbf{Tits product} illustrated in Figure~\ref{f:ex_arr_prod}.
The product of two faces~$F$ and~$G$, denoted~$FG$, is the first face you encounter after moving a small positive distance from an interior point of~$F$ to an interior point of~$G$.
The unit of this product is the central face~$O \in \face[\arr]$.
The \textbf{Tits algebra} of~$\arr$ is the monoid algebra $\tits[\arr] := \RR \Sigma[\arr]$.
See~\cite[Chapters 1 and 9]{am17} for more details.
We let~$\B{H}_F$ denote the basis element of~$\tits[\arr]$ associated with the face~$F$ of~$\arr$.

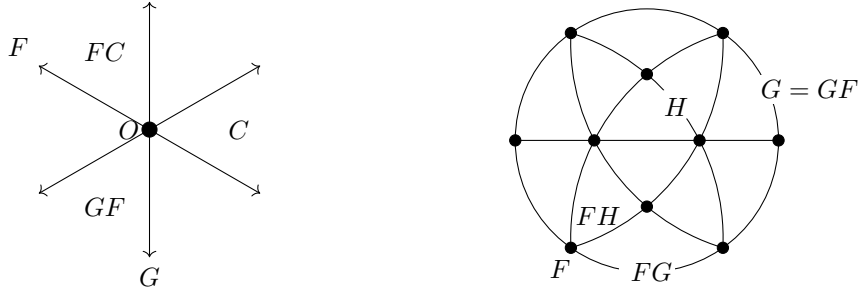
\begin{figure}[ht]
\[
\begin{gathered}
\begin{tikzpicture}[<->,black]
\newdimen\R
\R=1.7cm \draw (-90:\R) node [below] {\small$G$} -- (0,0) node [left] {\small$O$} -- (90:\R);
\draw (150:\R) node [above left] {\small$F$} -- (-30:\R);
\draw (210:\R) -- (30:\R);
\draw [fill] (0,0) circle (.1);
\node [] at (0:.7\R) {\small$C$};
\node [] at (120:.7\R) {\small$FC$};
\node [] at (240:.7\R) {\small$GF$};
\end{tikzpicture}
\end{gathered}
\hspace*{.2\linewidth}
\begin{gathered}
\begin{tikzpicture}[scale=.7,black]
\newdimen\R
\R=2.5cm \coordinate (madhya) at (0,0);
\draw (madhya) circle (\R); \coordinate (P) at (-1,0); \coordinate (Q) at (1,0); \coordinate (A1) at (54.735:\R);
\coordinate (B1) at (180+54.735:\R);
\arcThroughThreePoints{A1}{P}{B1};
\arcThroughThreePoints{B1}{Q}{A1};
\coordinate (A2) at (180-54.735:\R);
\coordinate (B2) at (-54.735:\R);
\arcThroughThreePoints{A2}{P}{B2};
\arcThroughThreePoints{B2}{Q}{A2};
\coordinate (A3) at (0:\R);
\coordinate (B3) at (180:\R);
\draw (A3) -- (B3);
\path[name intersections={of=kamaanA1 and kamaanB2, by=a1b2}];
\path[name intersections={of=kamaanB1 and kamaanA2, by=b1a2}];
\node [circle, inner sep=1.5pt,fill,draw] at (a1b2) {};
\node [circle, inner sep=1.5pt,fill,draw] at (b1a2) {};
\node [circle, inner sep=1.5pt,fill,draw] at (P) {};
\node [circle, inner sep=1.5pt,fill,draw] at (Q) {};
\node [circle, inner sep=1.5pt,fill,draw] at (A1) {};
\node [circle, inner sep=1.5pt,fill,draw] at (B1) {};
\node [circle, inner sep=1.5pt,fill,draw] at (A2) {};
\node [circle, inner sep=1.5pt,fill,draw] at (B2) {};
\node [circle, inner sep=1.5pt,fill,draw] at (A3) {};
\node [circle, inner sep=1.5pt,fill,draw] at (B3) {};
\node [black] at (180+54.735:1.2\R) {\small~$F$};
\node [black] at (180+54.735:0.7\R) {\small~$FH$};
\node [black, circle, inner sep = 0pt, minimum size = 1pt, fill=white] at (0.47,.63) {\small~$H$};
\node [black, inner sep = 2pt, fill=white, right] at (27.367:.85\R) {\small~$G=GF$};
\node [black, inner sep = 1pt, fill=white] at (270:1\R) {\small~$FG$};
\end{tikzpicture}
\end{gathered}
\]
\caption{Product of faces in arrangements in $\RR^2$ (left) and $\RR^3$ (right).
The second arrangement has been intersected with a sphere around the origin, its central face is not visible.
}
\label{f:ex_arr_prod}
\end{figure}

We view~$\fflat[\arr]$ as a commutative monoid with the join operation $\vee$ for the product.
This makes the support map~\eqref{eq:supp-map} a morphism of monoids.
We let~$\B{H}_\rmX$ denote the basis element of~$\RR\fflat[\arr]$ associated to the flat~$\rmX$ of~$\arr$, so that~$\B{H}_\rmX \cdot \B{H}_\rmY = \B{H}_{\rmX \vee \rmY}$.

A result of Solomon~\cite[Theorem 1]{solomon67} shows that the monoid algebra~$\RR \fflat[\arr]$ is split-semisimple.
This rests on the fact that the unique complete system of orthogonal idempotents for~$\RR \fflat[\arr]$ consists of elements~$\B{Q}_\rmX$ uniquely determined by
\begin{equation}\label{eq:H_and_Q}
\B{H}_\rmX = \sum_{\rmY:\,\rmY\geq\rmX}\B{Q}_\rmY
\quad\text{or equivalently}\quad
\B{Q}_\rmX = \sum_{\rmY:\,\rmY \geq \rmX} \mu(\rmX,\rmY) \B{H}_\rmY.
\end{equation}
In particular, $\RR \fflat[\arr]$ is the maximal split-semisimple quotient of~$\tits[\arr]$ via the support map and the simple modules of~$\tits[\arr]$ are indexed by flats.
The character~$\chi_\rmX$ of the simple module associated with the flat~$\rmX$ evaluated on an element
\[
w = \sum_F w^F \B{H}_F
\]
of~$\tits[\arr]$ yields
\begin{equation}\label{eq:elt-char}
\chi_\rmX(w)
= \sum_{F:\,\supp(F)\leq\rmX} w^F.
\end{equation}

Let~$t$ be a fixed scalar.
An element~$w$ of the Tits algebra is \textbf{characteristic of parameter~$t$} if for each flat~$\rmX$
\[
\chi_\rmX(w) = t^{\dim(\rmX)},
\]
with~$\chi_\rmX(w)$ as in~\eqref{eq:elt-char}.
Characteristic elements determine the characteristic polynomial of the arrangement and also determine the characteristic polynomial of the arrangements under each flat.
See~\citep[Section 12.4]{am17} and~\citep{abm19} for more information.

\subsection{Eulerian idempotents and diagonalization}

An \textbf{Eulerian family} of~$\arr$ is a collection of idempotent and mutually orthogonal elements~$\{\B{E}_\rmX\}_{\rmX \in \fflat[\arr]} \subseteq \tits[\arr]$ of the form
\[
\B{E}_\rmX = \sum_{F \,:\, \supp(F) \geq \rmX} a^F \B{H}_F,
\]
with~$a^F \neq 0$ for at least one~$F$ with~$\supp(F) = \rmX$. It follows that~$\{\B{E}_\rmX\}_\rmX$ is a complete system of primitive orthogonal idempotents and that $\supp(\B{E}_\rmX) = \B{Q}_\rmX$ \cite[Theorem 11.20]{am17}.
That is,
\[
\B{E}_\rmX \B{E}_\rmY = \begin{cases}
\B{E}_\rmX & \text{if } \rmX = \rmY,\\
0 & \text{otherwise},
\end{cases}
\qquad\qquad\qquad
\sum_\rmX \B{E}_\rmX = \B{H}_O,
\]
and $\B{E}_\rmX$ cannot be written as the sum of two non-trivial idempotents.

\begin{example}\label{ex:Eul_C2}
Let $\carr_2$ be the coordinate arrangement in $\RR^2$.
The following is an Eulerian family of $\carr_2$.
Observe that in this example only faces in the first quadrant have non-zero coefficients.
\[
\B{E}_\perp = 
\begin{gathered}
\begin{tikzpicture}
\filldraw [fill=green1, draw=white, opacity = .3] (0,0) -- (1,0) arc (0:90:1) -- cycle;
\draw [<->] (-.5,0) -- (0,0) node [blue1, below left] {\small~$1$} -- (0,-.5);
\draw [<->,red1] (1,0) node [right] {\small~$-1$} -- (0,0) -- (0,1) node [above] {\small~$-1$};
\node [circle, inner sep = 1.5pt, fill = blue1] at (0,0) {};
\node [color=green1!70!black] at (.8,.8) {\small~$1$};
\end{tikzpicture}
\end{gathered}
\quad
\B{E}_{x=0} = 
\begin{gathered}
\begin{tikzpicture}
\filldraw [fill=green1, draw=white, opacity = .3] (0,0) -- (1,0) arc (0:90:1) -- cycle;
\draw [<->] (-.5,0) -- (0,0) -- (0,-.5);
\draw [->] (0,0) -- (1,0);
\draw [->,red1] (0,0) -- (0,1) node [above] {\small~$1$};
\node [circle, inner sep = 1.5pt, fill = black] at (0,0) {};
\node [color=green1!70!black] at (.8,.8) {\small~$-1$};
\end{tikzpicture}
\end{gathered}
\quad
\B{E}_{y=0} = 
\begin{gathered}
\begin{tikzpicture}
\filldraw [fill=green1, draw=white, opacity = .3] (0,0) -- (1,0) arc (0:90:1) -- cycle;
\draw [<->] (-.5,0) -- (0,0) -- (0,-.5);
\draw [->,red1] (0,0) -- (1,0) node [right] {\small~$1$};
\draw [->] (0,0) -- (0,1);
\node [circle, inner sep = 1.5pt, fill = black] at (0,0) {};
\node [color=green1!70!black] at (.8,.8) {\small~$-1$};
\end{tikzpicture}
\end{gathered}
\quad
\B{E}_\top = 
\begin{gathered}
\begin{tikzpicture}
\filldraw [fill=green1, draw=white, opacity = .3] (0,0) -- (1,0) arc (0:90:1) -- cycle;
\draw [<->] (-.5,0) -- (0,0) -- (0,-.5);
\draw [<->] (0,1) -- (0,0) -- (1,0);
\node [circle, inner sep = 1.5pt, fill = black] at (0,0) {};
\node [color=green1!70!black] at (.8,.8) {\small~$1$};
\end{tikzpicture}
\end{gathered}
\]
\end{example}

A characteristic element~$w$ of \emph{non-critical}\footnote{$t \in \RR$ is non-critical if it is not a root of $\chi(\arr^\rmX,t)$ for any flat $\rmX \in \fflat[\arr]$.} parameter~$t$ uniquely determines an Eulerian family~$\B{E} = \{\B{E}_\rmX\}_\rmX$, which satisfies
\begin{equation}\label{eq:char_eul_idempotent}
w = \sum_\rmX t^{\dim(\rmX)} \rm E_\rmX.
\end{equation}
This is a consequence of~\citep[Propositions 11.9, 12.59]{am17}.
It follows that the action of such a characteristic elements~$w$ on any~$\tits[\arr]$-module~$M$ is diagonalizable.

Let~$M$ be a (right)~$\tits[\arr]$-module,~$w \in \tits[\arr]$ be a characteristic element of non-critical parameter~$t$ and~$\{\B{E}_\rmX\}_\rmX$ be the corresponding Eulerian family. Then, we have a decomposition
\begin{equation}\label{eq:M-decomposition}
M = \bigoplus_{\rmX} M \cdot \B{E}_\rmX
\end{equation}
of vector spaces. Expression~\eqref{eq:char_eul_idempotent} shows that~$w$ acts on~$M \cdot \B{E}_\rmX$ by multiplication by~$t^{\dim(\rmX)}$. We define
\[
\eta_\rmX(M) := \dim_\RR(M \cdot \B{E}_\rmX).
\]
The kernel of the support map $\supp$ is precisely the radical of $\tits[\arr]$ \citep[Proposition 9.22]{am17}. Consequently, the character~$\chi_M: \tits[\arr] \rightarrow \RR$ of~$M$ factors through~$\RR \fflat[\arr]$:
\[
\begin{tikzcd}[row sep=small]
{\tits[\arr]} \arrow[rd, "\chi_M"] \arrow[d, "\supp"'] & \\
{\RR\fflat[\arr]} \arrow[r, "\overline{\chi}_M"', dashed] & \RR
\end{tikzcd}
\]
Thus,~$\eta_\rmX(M) = \dim_\RR(M \cdot \B{E}_\rmX) = \chi_M(\B{E}_\rmX) = \opp{\chi}_M(\B{Q}_\rmX)$ is independent of the characteristic element~$w$. Furthermore, using relations~\eqref{eq:H_and_Q} and the linearity of $\opp{\chi}_M$ we deduce
\begin{multline}\label{eq:compute_eta}
\eta_\rmX(M) 
= \opp{\chi}_M(\B{Q}_\rmX) 
= \sum_{\rmY \geq \rmX} \mu(\rmX,\rmY) \opp{\chi}_M(\B{H}_\rmY)\\
= \sum_{\rmY \geq \rmX} \mu(\rmX,\rmY) \chi_M(\B{H}_{F_\rmY})
= \sum_{\rmY \geq \rmX} \mu(\rmX,\rmY) \dim_\RR(M \cdot \B{H}_{F_\rmY}),
\end{multline} 
where~$F_\rmY \in \face[\arr]$ is such that~$\supp(F_\rmY) = \rmY$. The last equality follows since~$\B{H}_{F_\rmY}$ is an idempotent element, and thus~$\chi_M(\B{H}_{F_\rmY}) = \dim_\RR(M \cdot \B{H}_{F_\rmY})$. 

Moreover, the number of composition factors~$M_{i+1}/M_i$ isomorphic to the simple module indexed by~$\rmX$ in a composition series
$
0 \subset M_1 \subset M_2 \subset \dots \subset M_k = M
$
of~$M$ is precisely~$\eta_\rmX(M)$.
See~\citep[Section 9.5 and Theorem D.37]{am17} for details.

\section{The polytope algebra as a module}\label{s:module}

Fix a hyperplane arrangement~$\arr$ in~$V$.
Take a normal vector~$v_\rmH$ for each hyperplane $\rmH \in \arr$, and consider the zonotope (Minkowski sum of segments):
\begin{equation}\label{eq:zon_A}
\frakz = \sum_{\rmH} \Conv\{ {\bf 0} , v_\rmH\}.
\end{equation}
Its normal fan~$\face_\frakz$ coincides with the collection of faces~$\face[\arr]$ of the arrangement $\arr$.
We say that a polytope~$\q$ is a \textbf{generalized zonotope} of~$\arr$ it is a deformation of~$\frakz$.

We now consider the algebra~$\Pi(\frakz)$ introduced in Section~\ref{ss:subalgebra_polytope}.
It is generated by the classes of generalized zonotopes of~$\arr$.
It only depends on the arrangement~$\arr$ and not on the particular choice of normal vectors~$v_\rmH$.
We start with a simple yet interesting result.
Recall the morphism $\psi_\f : \Pi(\frakz) \rightarrow \Pi(\f)$ defined for every face $\f \leq \frakz$ in~\eqref{eq:morphism_to_face}, it sends the class $[\q]$ of a generalized zonotope of $\arr$ to $[\q_v]$ where $v \in V$ is any vector such that $\f = \frakz_v$.

\begin{proposition}\label{p:zonotope_surjective}
Let~$\frakz$ be a zonotope, and~$\f$ a face of~$\frakz$. Then, the morphism~$\psi_\f$ is surjective.
\end{proposition}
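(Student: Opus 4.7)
My plan is to prove the stronger statement that for every deformation $\q'$ of $\f$, the class $[\q']$ already lies in $\Pi(\frakz)$ and satisfies $\psi_\f([\q']) = [\q']$. Since such classes generate $\Pi(\f)$, surjectivity of $\psi_\f$ follows at once.

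To show $[\q'] \in \Pi(\frakz)$, I would first exhibit $\f$ as a Minkowski summand of the zonotope $\frakz$, using the zonotope structure. Writing $\frakz = \sum_\rmH \Conv\{{\bf 0}, v_\rmH\}$ and picking any $v \in \relint(F)$ with $F = N(\f, \frakz)$, the face decomposes as $\f = \sum_\rmH \Conv\{{\bf 0}, v_\rmH\}_v$, where each summand is either a vertex of $\Conv\{{\bf 0}, v_\rmH\}$ or the full segment (according to the sign of $\langle v, v_\rmH \rangle$). For each $\rmH$ one can produce a complementary summand $\tilde{s}_\rmH$ (a point or a translate of the original segment) with $\Conv\{{\bf 0}, v_\rmH\} = \Conv\{{\bf 0}, v_\rmH\}_v + \tilde{s}_\rmH$, and summing over $\rmH$ gives $\frakz = \f + \sum_\rmH \tilde{s}_\rmH$. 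Consequently $\face[\arr] = \face_\frakz$ refines $\face_\f$. Combined with the hypothesis that $\face_\f$ refines $\face_{\q'}$, transitivity yields that $\q'$ is itself a deformation of $\frakz$, so $[\q'] \in \Pi(\frakz)$.

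For the identity $\psi_\f([\q']) = [\q'_v] = [\q']$, let $\rmX = \supp(F)$. Since $\f = \frakz_v$ holds for every $v \in \relint(F)$, the affine hull of $\f$ lies in an affine hyperplane perpendicular to each such $v$, hence is parallel to $\rmX^\perp$. As $\q'$ is a deformation of $\f$, its normal fan is coarsened from $\face_\f$ and therefore has lineality space containing $\rmX$; equivalently, the affine hull of $\q'$ is parallel to a subspace of $\rmX^\perp$. Thus $\langle v, \cdot \rangle$ is constant on $\q'$, so $\q'_v = \q'$ up to translation, and $[\q'_v] = [\q']$ by translation invariance.

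I expect the only substantive obstacle to be the first step above: identifying $\f$ as a Minkowski summand of $\frakz$, which genuinely uses the zonotope hypothesis and would fail for arbitrary polytopes (cf.\ Theorem~\ref{t:simple_surjective}, whose proof for simple polytopes proceeds along entirely different lines). Once this structural input is secured, the remainder is routine manipulation of normal-fan refinements and the fact that maximization in a direction perpendicular to a polytope's affine hull acts as the identity.
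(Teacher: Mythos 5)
Your proof is correct and follows essentially the same route as the paper's: exhibit $\f$ as a Minkowski summand of $\frakz$ using the decomposition of $\frakz$ into segments, conclude that the generators of $\Pi(\f)$ already lie in $\Pi(\frakz)$, and observe that $\psi_\f$ fixes them because maximizing in a direction $v\in\relint(F)$, which is orthogonal to the affine hull of $\f$ and hence to that of any of its deformations, acts as the identity. The only cosmetic difference is that you phrase the transitivity step via normal-fan refinement for deformations while the paper uses transitivity of the Minkowski-summand relation; these are interchangeable by the remark following Shephard's result in Section~\ref{ss:subalgebra_polytope}.
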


\begin{proof}
Let $\f$ be a face of $\frakz$, $F \in \face[\arr]$ be its normal cone, and $\rmX = \supp(F)$ be the flat orthogonal to $\f$.
It follows from~\eqref{eq:zon_A} that~$\f = \frakz_F$ is a translate of
\[
\frakz_\rmX := \sum_{\rmH \,:\, \rmH \supseteq \rmX} \Conv\{ {\bf 0} , v_\rmH\}.
\]
In particular,~$\f$ is a Minkowski summand of~$\frakz$.
Being a Minkowski summand is a transitive relation.
Hence, any generator $[\q]$ of~$\Pi(\f)$ is also in~$\Pi(\frakz)$.
That is,~$\Pi(\f)$ is a subalgebra of~$\Pi(\frakz)$.
Moreover, if~$v \in \relint(F)$ and~$\q$ is a Minkowski summand of~$\f$, then~$\q_v = \q$.
Therefore, the composition
$
\Pi(\f) \hookrightarrow \Pi(\frakz) \xrightarrow{\psi_\f} \Pi(\f)
$
is the identity map.
Consequently, the morphism $\psi_\f$ is surjective.
\end{proof}

\begin{remark}
Compare with Theorem~\ref{t:simple_surjective} and note that we do not assume the zonotope~$\frakz$ to be simple.
For an arbitrary polytope~$\p$ and a face~$\f$ of~$\p$, there is no natural morphism~$\Pi(\f) \rightarrow \Pi(\p)$, unlike in the previous case.
This is a particular property of zonotopes. Indeed, a polytope $\p$ is a zonotope if and only if every face $\f \leq \p$ is a Minkowski summand of $\p$, see~\citep[Proposition 2.2.14]{blswz} for a proof.
\end{remark}

Let~$F$ be a face of~$\arr$ and~$\f = \frakz_F$ the corresponding face of~$\frakz$.
We define right multiplication by the basis element~$\B{H}_F \in \tits[\arr]$ on~$\Pi(\frakz)$
as the projection $\psi_\f : \Pi(\frakz) \rightarrow \Pi(\f) \subseteq \Pi(\frakz)$.

\begin{theorem}\label{t:mod_str}
The algebra~$\Pi(\frakz)$ is a right~$\tits[\arr]$-module under the action above.
Explicitly, for a generator~$[\q]$ of~$\Pi(\frakz)$ and a basis element~$\B{H}_F$ of~$\tits[\arr]$,
\[
[\q] \cdot \B{H}_F := [\q_v], \qquad
\text{where }v \in \relint(F).
\]
Moreover, each graded component~$\Xi_r(\frakz)$ is a~$\tits[\arr]$-submodule and the action of basis elements~$\{\B{H}_F\}_F$ on~$\Pi(\frakz)$ is by \textup{(}graded\textup{)} algebra endomorphisms.
\end{theorem}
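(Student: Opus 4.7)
The plan is to exploit that the formula $[\q] \cdot \B{H}_F = [\q_v]$ has already been packaged as the morphism $\psi_\f : \Pi(\frakz) \to \Pi(\f) \hookrightarrow \Pi(\frakz)$ constructed just before the theorem, so the well-definedness of the action (independence of $v \in \relint(F)$, compatibility with the valuation~\eqref{eq:valuation_property} and translation~\eqref{eq:translation_property} relations, and the fact that $[\q_v]$ lies in $\Pi(\frakz)$) is already established via Theorem~\ref{t:direction_morphism} and the inclusion $\Pi(\f) \subseteq \Pi(\frakz)$ from the proof of Proposition~\ref{p:zonotope_surjective}. What remains is to verify the module axioms, the algebra-endomorphism property, and preservation of the grading.

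For the unit axiom $[\q] \cdot \B{H}_O = [\q]$, the central face $O$ equals the minimum flat $\perp$, which is the lineality space of $\face[\arr]$ and therefore contained in the lineality of any coarser fan $\face_\q$; hence $\q_v = \q$ for any $v \in \relint(O) = \perp$. For the associativity identity $([\q] \cdot \B{H}_F) \cdot \B{H}_G = [\q] \cdot \B{H}_{FG}$, I would show that for $v \in \relint(F)$ and $w \in \relint(G)$ and small enough $\epsilon > 0$, the vector $u := v + \epsilon w$ lies in $\relint(FG)$ (this is the geometric definition of the Tits product recalled in Figure~\ref{f:ex_arr_prod}) and satisfies $(\q_v)_w = \q_u$ for any polytope $\q$. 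A short argument unpacking the definition of face maximization gives this identity, and it extends by linearity to all of $\Pi(\frakz)$.

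Next, each $\psi_F$ is an algebra endomorphism because face maximization distributes over Minkowski sums, $(\q + \q')_v = \q_v + \q'_v$, so $\psi_F([\q] \cdot [\q']) = \psi_F([\q]) \cdot \psi_F([\q'])$ on generators, while $\psi_F(1) = [\{{\bf 0}\}_v] = 1$. To see that $\psi_F$ preserves the grading, I would invoke Theorem~\ref{t:direction_morphism} again, which states that the maximization operator commutes with nonnegative dilations $\delta_\lambda$. By the eigenspace characterization~\eqref{eq:Xi_dilation} of $\Xi_r(V)$, if $x \in \Xi_r(\frakz)$ and $\lambda > 0$ with $\lambda \neq 1$, then $\delta_\lambda(x \cdot \B{H}_F) = (\delta_\lambda x) \cdot \B{H}_F = \lambda^r (x \cdot \B{H}_F)$; hence $x \cdot \B{H}_F \in \Xi_r(V) \cap \Pi(\frakz) = \Xi_r(\frakz)$. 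This simultaneously establishes that each graded component is a $\tits[\arr]$-submodule.

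The main obstacle is psychological rather than technical: essentially all the heavy lifting—well-definedness of the maximization operator on classes and its compatibility with dilations—has been delegated to Theorem~\ref{t:direction_morphism}, and the single geometric identity that truly needs to be verified, namely $(\q_v)_w = \q_{v+\epsilon w}$, is the classical incarnation of the Tits product as iterated directional maximization. Consequently the proof should reduce to a clean assembly of already-established pieces rather than any new computation.
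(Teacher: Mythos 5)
Your proposal is correct and follows essentially the same route as the paper: well-definedness and compatibility with dilations are delegated to Theorem~\ref{t:direction_morphism} and the construction of $\psi_\f$, unitality comes from the central face, associativity from the classical identity $(\q_v)_w = \q_{v+\epsilon w}$ together with $v+\epsilon w \in \relint(FG)$, and the submodule statement from the eigenspace characterization~\eqref{eq:Xi_dilation}. The only cosmetic differences are your lineality-space argument for the unit (the paper simply notes ${\bf 0} \in O$) and your explicit check of multiplicativity via $(\q+\q')_v = \q_v + \q'_v$.
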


\begin{proof}
The zero vector belongs to the central face~$O$, so the action is clearly unital.
Associativity follows from the following fact about polytopes~\citep[Section 3.1.5]{grunbaum03convex}.
If~$\q \subseteq V$ is a polytope and~$v,w \in V$, then 
$
(\q_v)_w = \q_{v + \lambda w}
$
for any small enough~$\lambda > 0$.
Similarly, the definition of the Tits product is such that if~$v \in \relint(F)$ and~$w \in \relint(G)$, then~$v + \lambda w \in \relint(FG)$ for any small enough~$\lambda > 0$.
Hence,
\[
([\q] \cdot \B{H}_F) \cdot \B{H}_G = [(\q_v)_w] = [\q_{v + \lambda w}] = [\q] \cdot \B{H}_{FG}.
\]
It follows that this product gives~$\Pi(\frakz)$ the structure of a right~$\tits[\arr]$-module.

The second statement follows directly from Theorem~\ref{t:direction_morphism} and the characterization of the graded components~$\Xi_r$ in~\eqref{eq:Xi_dilation}.
Indeed, for any~$x \in \Xi_r(\frakz)$ and $\lambda > 0$,
\[
\delta_\lambda(x \cdot \B{H}_F)
= \delta_\lambda(x) \cdot \B{H}_F
= \lambda^r x \cdot \B{H}_F
= \lambda^r (x \cdot \B{H}_F),
\]
thus~$x \cdot \B{H}_F \in \Xi_r(\frakz)$.
\end{proof}

\subsection{Simultaneous diagonalization}

Let $\lambda > 0$ and let $w \in \tits[\arr]$ be a characteristic element of non-critical parameter $t$.
We know that the dilation morphism $\delta_\lambda$ and the action of $w$ are diagonalizable.
Moreover, since $\delta_\lambda$ and the action of $w$ commute, they are simultaneously diagonalizable.
A natural question is to determine the eigenvalues of $\delta_\lambda$ and of the action of $w$ in their simultaneous eigenspaces. 
We completely answer this question in the case of the Coxeter arrangements of type A and B in the next two sections.
The following result holds in the general case.

\begin{proposition}
Let $x \in \Pi(\frakz)$ be a \textup{(}nonzero\textup{)} simultaneous eigenvector for $\delta_\lambda$ and $w$ with eigenvalues $\lambda^r$ and $t^k$, respectively.
Then, $r + k \leq d$.
\end{proposition}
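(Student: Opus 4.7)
The plan is to combine the spectral decomposition of $w$ via its Eulerian family with McMullen's bound that $\Xi_r(\f) = 0$ whenever $r > \dim(\f)$ (Lemma 13 of~\citep{mcmullen89}). Relation~\eqref{eq:char_eul_idempotent} and the decomposition~\eqref{eq:M-decomposition} express $\Pi(\frakz) = \bigoplus_\rmX \Pi(\frakz) \cdot \B{E}_\rmX$, with $w$ acting by $t^{\dim(\rmX)}$ on the $\rmX$-summand. Since each $\Xi_r(\frakz)$ is a $\tits[\arr]$-submodule by Theorem~\ref{t:mod_str}, this decomposition restricts to $\Xi_r(\frakz) = \bigoplus_\rmX \Xi_r(\frakz) \cdot \B{E}_\rmX$. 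Writing $x = \sum_\rmX x_\rmX$ in this finer decomposition, the identity $x \cdot w = t^k x$ forces $t^{\dim(\rmX)} = t^k$ for every $\rmX$ with $x_\rmX \neq 0$; under the usual convention that $t$ is generic enough to separate powers, this pins down a single flat $\rmX$ with $\dim(\rmX) = k$ and $x \in \Xi_r(\frakz) \cdot \B{E}_\rmX$.

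Next, I would exploit the idempotence identity $x = x \cdot \B{E}_\rmX$ together with the expansion $\B{E}_\rmX = \sum_{F : \supp(F) \geq \rmX} a^F \B{H}_F$. For each face $F$ in this sum, the module action sends $x \cdot \B{H}_F$ to $\psi_{\frakz_F}(x)$; since $\psi_{\frakz_F}$ is a morphism of graded algebras by~\eqref{eq:morphism_to_face}, this image lies in $\Xi_r(\frakz_F)$. The key dimensional input is that the face $\frakz_F$ of $\frakz$ with normal cone $F$ has dimension $d - \dim(F) = d - \dim(\supp F) \leq d - k$, since $\supp(F) \geq \rmX$ forces $\dim(\supp F) \geq k$.

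Finally, suppose for contradiction that $r + k > d$. Then $r > d - k \geq \dim(\frakz_F)$ for every face $F$ contributing to $\B{E}_\rmX$, so $\Xi_r(\frakz_F) = 0$ and hence $x \cdot \B{H}_F = 0$. Summing over $F$ yields $x = x \cdot \B{E}_\rmX = 0$, contradicting $x \neq 0$. The one subtlety I anticipate is the initial reduction to a single flat $\rmX$: if the non-critical parameter $t$ satisfies $t^a = t^b$ for some $a \neq b$, several flats contribute to a single $w$-eigenspace, and the argument above must instead be applied to a nonzero component $x_\rmX$ of the decomposition, which still yields $r + \dim(\rmX) \leq d$ for each such $\rmX$.
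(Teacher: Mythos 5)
Your proof is correct, and its second half takes a genuinely more direct route than the paper's. The paper performs the same initial reduction to a nonzero element of $\Xi_r(\frakz)\cdot\B{E}_\rmX$ with $\dim(\rmX)=k$, but then concludes by a dimension count: invoking the surjectivity of $\psi_\f$ (Proposition~\ref{p:zonotope_surjective}) to identify $\Xi_r(\frakz)\cdot\B{H}_F$ with $\Xi_r(\frakz_\rmY)$ for $\supp(F)=\rmY$, it computes $\eta_\rmX(\Xi_r(\frakz))=\sum_{\rmY\geq\rmX}\mu(\rmX,\rmY)\dim_\RR(\Xi_r(\frakz_\rmY))$ via~\eqref{eq:compute_eta} and observes that every summand vanishes when $r+k>d$, so the whole isotypic component is zero. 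You instead expand $x=x\cdot\B{E}_\rmX=\sum_{\supp(F)\geq\rmX}a^F\,(x\cdot\B{H}_F)$ and kill each term individually, since $x\cdot\B{H}_F\in\Xi_r(\frakz_F)$ and $\dim(\frakz_F)=d-\dim(\supp F)\leq d-k<r$. This avoids both the character/M\"obius computation and the surjectivity statement: all you need is that the action of $\B{H}_F$ is a graded morphism landing in $\Pi(\frakz_F)$, together with the vanishing $\Xi_r(\q)=0$ for $r>\dim(\q)$, which is exactly the fact the paper also uses without proof (your citation of \citep[Lemma 13]{mcmullen89} is not quite the right statement --- that lemma is the nilpotency of $[\p]-1$; the vanishing follows from the structure theorem applied within the linear span of $\frakz_F$, since $\Pi(\frakz_F)$ is generated in degree one by log-classes of polytopes lying in translates of that span). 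What the paper's longer route buys is the identity~\eqref{eq:eta_general_arr}, which is established inside its proof and reused later to derive~\eqref{eq:eta_simplicial_arr}; your argument is leaner but does not produce that byproduct. Finally, your caveat about non-generic $t$ (note $t=-1$ is always non-critical, since $(-1)^{\dim\rmX}\cpoly(\arr^\rmX,-1)$ counts chambers) flags a point the paper glosses over when it asserts that the $t^k$-eigenspace is the sum over $k$-dimensional flats; treating a nonzero component $x_\rmX$ as you do is the right fix, with the understanding that the exponent $k$ in the statement is then read as the dimension of the contributing flat.
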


\begin{proof}
Let $\{\B{E}_\rmX\}_\rmX$ be the Eulerian family associated to $w$.
Using the characterization of the graded components $\Xi_r$ as the eigenspaces of $\delta_\lambda$, and the decomposition of $\tits[\arr]$-modules in \eqref{eq:M-decomposition}, we deduce that the common eigenspace for $\delta_\lambda$ and $w$ with the given eigenvalues is $\bigoplus_\rmX \Xi_r(\frakz) \cdot \B{E}_\rmX$, where the sum is over all $k$-dimensional flats of $\arr$. 
Without loss of generality, we assume that $x \in \Xi_r(\frakz) \cdot \B{E}_\rmX$ for a single $k$-dimensional flat $\rmX$.

Proposition~\ref{p:zonotope_surjective} implies that $\Xi_r(\frakz) \cdot \B{H}_F = \Xi_r(\frakz_\rmY)$, where~$F \in \face[\arr]$ is any face of support~$\rmY$.
Hence, formula~\eqref{eq:compute_eta} yields
\begin{equation}\label{eq:eta_general_arr}
\eta_\rmX(\Xi_r(\frakz)) = \sum_{\rmY \geq \rmX} \mu(\rmX,\rmY) \dim_\RR(\Xi_r(\frakz_\rmY)).
\end{equation}
If $k > d-r$, then $\dim(\frakz_\rmY) = d - \dim(\rmY) \leq d - k < r$ and $\dim_\RR(\Xi_r(\frakz_\rmY)) = 0$ for any $\rmY$ in the sum.
So, in this case, $\dim_\RR(\Xi_r(\frakz) \cdot \B{E}_\rmX) = \eta_\rmX(\Xi_r(\frakz)) = 0$.
This contradicts that $x \in \Xi_r(\frakz) \cdot \B{E}_\rmX$ is a nonzero element.
Therefore, $r + k \leq d$.
\end{proof}

If in addition~$\arr$ is a simplicial arrangement, like in the case of reflection arrangements, then $\frakz$ and each of its faces are simple polytopes.
In that case, Theorem~\ref{t:simple_h_dimension} allows us to replace $\dim_\RR(\Xi_r(\frakz_\rmY))$ by $h_r(\frakz_\rmY)$ in expression \eqref{eq:eta_general_arr}.
Multiplying by $z^r$ and taking the sum over all values of $r$, we obtain
\begin{equation}\label{eq:eta_simplicial_arr}
\sum_r \eta_\rmX(\Xi_r(\frakz)) z^r = \sum_{\rmY \geq \rmX} \mu(\rmX,\rmY) h(\frakz_\rmY,z).
\end{equation}

\subsection{First example: the cube and the coordinate arrangement}

Let~$\carr_d$ be the coordinate arrangement in~$\RR^d$.
It consists of the~$d$ coordinate hyperplanes~$x_i = 0$ for~$i=1,\dots,d$.
We identify the lattice of flats~$\fflat[\carr_d]$ with the (opposite) boolean lattice~$2^{[d]}$ in the following manner:
\[
S \subseteq [d] \longleftrightarrow \rmX_S := \bigcap_{i \in S} \{x \,:\, x_i = 0\}.
\]
Observe that~$\rmX_S \leq \rmX_T$ if and only if~$T \subseteq S$, and in this case $\mu_\fflat(\rmX_S,\rmX_T) = \mu_{2^{[d]}}(T,S) = (-1)^{|S \setminus T|}$.

The~$d$-cube~$\frakc_d = [0,1]^d$ a zonotope of~$\carr_d$.
It is the Minkowski sum of the~$d$ line segments $\frakl_i := \Conv\{{\bf 0},e_i\}$ for~$i=1,\dots,d$.
It is a simple polytope with~$h$-polynomial~$h(\frakc_d,z) = (1+z)^d$.
Furthermore, for any~$S \subseteq [d]$ we have
\[
(\frakc_d)_{\rmX_S} = \sum_{i \in S} \frakl_i \cong \frakc_{|S|}.
\]

Let us consider the right~$\tits[\carr_d]$-module~$\Pi(\frakc_d)$.
For a flat~$\rmX_S$, formula~\eqref{eq:eta_simplicial_arr} yields
\[
\sum_r \eta_{\rmX_S}(\Xi_r(\frakc_d)) z^r 
=  \sum_{T \subseteq S} \mu(\rmX_S,\rmX_T) h(\frakc_{|T|},z)
=  \sum_{T \subseteq S} (-1)^{|S \setminus T|} (1+z)^{|T|}
= z^{|S|}.
\]
Hence,
\[
\eta_{\rmX_S}(\Xi_r(\frakc_d)) = \begin{cases}
1 & \text{if } |S| = r,\\
0 & \text{otherwise}.
\end{cases}
\]
In particular, a series decomposition of~$\Xi_r(\frakc_d)$ contains exactly one copy of the simple module indexed by~$\rmX_S$ for every~$S \in {[d] \choose r}$.

Let us now consider the characteristic element~$\gamma_t \in \tits[\carr_d]$ introduced in~\citep[Section 5.3]{abm19} for $t \neq 1$.
It is defined by
\[
\gamma_t = \sum_F \gamma_t^F \B{H}_F,
\quad \text{where} \quad
\gamma_t^F = \begin{cases}
(t-1)^{\dim(F)} & \text{ if~$F$ lies in the first orthant,} \\
0& \text{ otherwise.}
\end{cases}
\]
For each $S \subseteq [d]$, let $F_S$ be the intersection of the first orthant with~$\rmX_S$, it is a face of $\carr_d$.
We have~$\supp(F_S) = \rmX_S$ and~$T \subseteq S$ if and only if~$F_S \leq F_T$.
A simple computation shows that the Eulerian family corresponding to the characteristic element $\gamma_t$ is determined by
\[
\B{E}_{\rmX_S} = \sum_{T \subseteq S} (-1)^{|S \setminus T|} \B{H}_{F_T}.
\]
In dimension 2, this is the Eulerian family in Example \ref{ex:Eul_C2}.
For each~$S \subseteq [d]$, define
\[
y_S = \prod_{i \in S} \log[\frakl_i] \in \Pi(\frakc_d).
\]
Example~\ref{ex:prod_intervals} shows that~$y_S$ is a nonzero element of~$\Pi(\frakc_d)$.

We claim that~$\{y_S\}_{S\subseteq [d]}$ is a basis of simultaneous eigenvectors of~$\Pi(\frakc_d)$.
Explicitly,~$y_S$ is an eigenvector for the action of~$\gamma_t$ of eigenvalue~$t^{d-|S|}$, and for the action of~$\delta_\lambda$ of eigenvalue~$\lambda^{|S|}$ ($\lambda > 0$).
The second statement is clear, since~$\log[\frakl_i] \in \Xi_1(\frakc_d)$.
Moreover, using that~$\log[\frakl_i] = [\frakl_i] - 1$, we have
\[
y_S = \prod_{i \in S} ([\frakl_i] - 1) = \sum_{T \subseteq S} (-1)^{|S \setminus T|} [\frakc_{\rmX_T}].
\]
On the other hand, observe that
\[
[\frakc_{\rmX_S}] \cdot \B{E}_{\rmX_S} = \sum_{T \subseteq S} (-1)^{|S \setminus T|} [\frakc_{\rmX_S}] \cdot \B{H}_{F_T} = \sum_{T \subseteq S} (-1)^{|S \setminus T|} [\frakc_{\rmX_T}] = y_S.
\]
Therefore,
\[
y_S \in \Xi_r(\frakc_d) \cap (\Pi(\frakc_d) \cdot \B{E}_{\rmX_S}) = \Xi_r(\frakc_d) \cdot \B{E}_{\rmX_S}.
\]
The claim follows since~$\dim(\rmX_S) = d - |S|$.

\subsection{The zonotope module of a product of arrangements}

The Cartesian product of two arrangements~$\arr$ in~$V$ and~$\arr'$ in~$W$ is the following collection of hyperplanes in~$V \oplus W$:
\[
\arr \times \arr' =
\{ \rmH \oplus W \,:\, \rmH \in \arr \} \cup
\{ V \oplus \rmH \,:\, \rmH \in \arr' \}.
\]
One can easily verify that~$\face[\arr \times \arr'] \cong \face[\arr] \times \face[\arr']$ as monoids.
Hence, ~$\tits[\arr \times \arr'] \cong \tits[\arr] \otimes \tits[\arr']$.
In fact, it is also true that
\[
\Pi(\frakz \times \frakz') \cong \Pi(\frakz) \otimes \Pi(\frakz'),
\]
where~$\frakz$ and~$\frakz'$ are zonotopes of~$\arr$ and~$\arr'$, respectively, and therefore~$\frakz \times \frakz'$ is a zonotope of~$\arr \times \arr'$.
Indeed, every generalized zonotope of~$\arr \times \arr'$ is the Cartesian product of generalized zonotopes of~$\arr$ and~$\arr'$.
The corresponding isomorphism is induced by
\[
\begin{array}{CCC}
\Pi(\frakz) \otimes \Pi(\frakz') & \rightarrow & \Pi(\frakz \times \frakz')\\
{[\p]} \otimes [\q] & \mapsto & [\p \times \q]
\end{array}
\]
The fact that this map is well-defined and a morphism of~$\tits[\arr \times \arr']$-modules follows from the ideas in Section~\ref{ss:mc}.

\section{The module of generalized permutahedra}\label{s:braid}

Generalized permutahedra are the deformations of the standard permutahedron $\pi_d \subseteq \RR^d$.
Edmonds first introduced them under a different name in~\citep{edmonds}, where he studied their relation to submodular functions and optimization.
For a thorough study of the combinatorics of these polytopes, see~\citep{aa17,postnikov09,prw08faces}.

In this section, we study the algebra $\Pi(\pi_d)$ of generalized permutahedra and its structure as a module over the Tits algebra of the braid arrangement $\arr_d$.
We begin with a brief review of the braid arrangement, its relation with the symmetric group, and some statistics on permutations.

\subsection{The braid arrangement}

The {braid arrangement}~$\arr_d$ in~$\RR^d$ consists of the 
diagonal 
hyperplanes $x_i=x_j$ for~$1\leq i<j\leq d$.
Its central face is the line perpendicular to the hyperplane~$x_1+\cdots+x_d=0$. Intersecting $\arr_d$ with this hyperplane and a sphere around the origin we obtain the \emph{Coxeter complex of type~$A_{d-1}$}. The pictures below show the cases~$d=3$ and~$4$.
\[
\begin{gathered}
\begin{tikzpicture}[scale=.6]
\newdimen\R
\R=2.5cm
\draw (0,0) circle (\R);
\node [circle, inner sep=1.5pt,fill=magenta,draw] at (30:\R) {};
\node [circle, inner sep=1.5pt,fill=cyan,draw] at (90:\R) {};
\node [circle, inner sep=1.5pt,fill=magenta,draw] at (150:\R) {};
\node [circle, inner sep=1.5pt,fill=cyan,draw] at (210:\R) {};
\node [circle, inner sep=1.5pt,fill=magenta,draw] at (270:\R) {};
\node [circle, inner sep=1.5pt,fill=cyan,draw] at (330:\R) {};
\end{tikzpicture}
\end{gathered}
\hspace*{.2\linewidth}
\begin{gathered}
\begin{tikzpicture}[scale=.6]
\newdimen\R
\R=2.5cm
\draw (0,0) circle (\R);
\coordinate (P) at (-1,0); \coordinate (Q) at (1,0); \coordinate (A1) at (54.735:\R);
\coordinate (B1) at (180+54.735:\R);
\arcThroughThreePoints{A1}{P}{B1};
\arcThroughThreePoints{B1}{Q}{A1};
\coordinate (A2) at (180-54.735:\R);
\coordinate (B2) at (-54.735:\R);
\arcThroughThreePoints{A2}{P}{B2};
\arcThroughThreePoints{B2}{Q}{A2};
\coordinate (A3) at (0:\R);
\coordinate (B3) at (180:\R);
\draw (A3) -- (B3);
\path[name intersections={of=kamaanA1 and kamaanB2, by=a1b2}];
\path[name intersections={of=kamaanB1 and kamaanA2, by=b1a2}];
\node [circle, inner sep=1.5pt,fill=magenta,draw] at (a1b2) {};
\node [circle, inner sep=1.5pt,fill=magenta,draw] at (b1a2) {};
\node [circle, inner sep=1.5pt,fill=orange,draw] at (P) {};
\node [circle, inner sep=1.5pt,fill=cyan,draw] at (Q) {};
\node [circle, inner sep=1.5pt,fill=orange,draw] at (A1) {};
\node [circle, inner sep=1.5pt,fill=cyan,draw] at (B1) {};
\node [circle, inner sep=1.5pt,fill=cyan,draw] at (A2) {};
\node [circle, inner sep=1.5pt,fill=orange,draw] at (B2) {};
\node [circle, inner sep=1.5pt,fill=magenta,draw] at (A3) {};
\node [circle, inner sep=1.5pt,fill=magenta,draw] at (B3) {};
\end{tikzpicture}
\end{gathered}
\]
Flats and faces of $\arr_d$ are in one-to-one correspondence with set partitions and set compositions of~$[d] : = \{1,2,\dots,d\}$, respectively. 
We proceed to review this correspondence.

A \textbf{weak set partition} of a finite set~$I$ is a collection~$\rmX = \{S_1,\dots,S_k\}$ of pairwise disjoint subsets $S_i \subseteq I$ such that $I = S_1 \cup \dots \cup S_k$.
The subsets~$S_i$ are the \textbf{blocks} of~$\rmX$.
A \textbf{set partition} is a weak set partition with no empty blocks.
We write~$\rmX \vdash I$ to denote that~$\rmX$ is a set partition of~$I$.
Given a partition~$\rmX \vdash [d]$, the corresponding flat of $\arr_d$ is the intersection of the hyperplanes~$x_a = x_b$ for all~$a,b$ that belong to the same block of $\rmX$, as illustrated in the following example for $d=8$:
\[
x_1 = x_3, \quad x_2 = x_5 = x_6 = x_8
\qquad \longleftrightarrow \qquad
\{13,2568,4,7\},
\]
where we write $13$ to abbreviate the set $\{1,3\}$, $2568$ to abbreviate the set $\{2,5,6,8\}$ and so on.
We use $\rmX$ to denote both a flat of $\arr_d$ and the corresponding set partition of $[d]$.
Observe that $\dim(\rmX)$ is precisely the number of blocks of $\rmX$ as a partition.
The partial order relation of $\fflat[\arr_d]$ becomes the ordering by refinement of set partitions. That is, $\rmX \leq \rmY$ if the set partition $\rmX$ is \textbf{refined by}~$\rmY$. Recall that $\rmX$ is refined by $\rmY$ if every block of $\rmX$ is the union of some blocks in $\rmY$. For instance, $\{12345678\} \leq \{13,2568,4,7\} \leq \{1,28,3,4,56,7\}$.

If~$S \subseteq I$ is a union of blocks of a partition~$\rmX \vdash I$, we let~$\rmX|_S \vdash S$ denote the partition of~$S$ formed by the blocks of $\rmX$ whose union is $S$.
Let~$\rmX = \{S_1,\dots,S_k\} \vdash [d]$ be a partition.
Then, the choice of $\rmY \geq \rmX$ is equivalent to the choice of partitions $\rmY|_{S_i} \vdash S_i$ for each block of $\rmX$.
With $\rmX$ and $\rmY$ as above, the Möbius function of~$\fflat[\arr_d]$ is determined by
\begin{equation}\label{eq:mu_prod_A}
\mu(\perp,\rmX) = (-1)^{k-1} (k-1)!
\qqand
\mu(\rmX,\rmY) = \mu(\perp,\rmY|_{S_1}) \dots \mu(\perp,\rmY|_{S_k}),
\end{equation}
where in each factor, $\perp$ denotes the minimum partition of $S_i$.

A \textbf{set composition} of~$I$ is an ordered set partition~$F = (S_1,\dots,S_k)$.
We write~$F \vDash I$ to denote that~$F$ is a composition of~$I$, and let~$\supp(F) \vdash I$ be the underlying (unordered) set partition.
Given a set composition~$F \vDash [d]$, the corresponding face of $\arr_d$ is obtained by intersecting the hyperplanes $x_a = x_b$ whenever $a,b$ are in the same block of $F$, and the halfspaces~$x_a \geq x_b$ whenever the block containing~$a$ precedes the block containing~$b$. For example,
\[
x_1 = x_3 \geq x_4 \geq x_2 = x_5 = x_6 = x_8 \geq x_7
\qquad \longleftrightarrow \qquad
(13,4,2568,7).
\]

\subsection{The symmetric group and the Eulerian polynomial}

The symmetric group~$\frakS_d$ is the group of \emph{permutations}~$\sigma : [d] \rightarrow [d]$ under composition.
It is the Coxeter group corresponding to the braid arrangement. It acts on~$\RR^d$ by permuting coordinates:
\[
\sigma(x_1,x_2,\dots,x_d) = (x_{\sigma(1)},x_{\sigma(2)},\dots,x_{\sigma(d)}).
\]
For a permutation $\sigma \in \frakS_d$, we let~$\supp(\sigma)$ denote the \textbf{subspace of points fixed by the action} of~$\sigma$; it is a flat of~$\arr_d$.
In view of the identification between flats of $\arr_d$ and partitions of $[d]$,~$\supp(\sigma)$ can equivalently be defined as the partition of~$[d]$ into the disjoint cycles of~$\sigma$.
For example, if in cycle notation $\sigma = ({\bf 1}3)({\bf 2}6{\bf 5}8)(4)(7)$, then $\supp(\sigma) = \{13,2568,4,7\}$.

Recall that $i \in [d-1]$ is a \textbf{descent} of $\sigma \in \frakS_d$ if $\sigma(i) > \sigma(i+1)$, 
and $i \in [d-1]$ is an \textbf{excedance} of $\sigma \in \frakS_d$ if $\sigma(i) > i$. Let $\des(\sigma)$ and $\exc(\sigma)$ denote the number of descents and excedances of $\sigma$, respectively. In the example above, $1,2,5$ (in bold) are the excedances of $\sigma$, and we have $\exc(\sigma) = 3$. We can similarly define descents and excedances for permutations of any set $S$ with a total order~$\prec$, we denote the corresponding statistics by $\des_\prec$ and $\exc_\prec$.

It is a classical result that descents and excedances are \emph{equidistributed} in~$\frakS_d$.
That is,
\[
A_{d,k} := \big|\{ \sigma \in \frakS_d \,:\, \des(\sigma) = k \}\big|
=
\big|\{ \sigma \in \frakS_d \,:\, \exc(\sigma) = k \}\big|,
\]
for all possible values of~$k$.
Foata's \emph{fundamental transformation} provides a simple proof of this result.
The numbers $A_{d,k}$ are the classical \textbf{Eulerian numbers} (OEIS: \href{https://oeis.org/A008292}{A008292}).
The \textbf{Eulerian polynomial}~$A_d(z)$ is:
\[
A_d(z) := \sum_{k=0}^{d-1} A_{d,k} z^k = \sum_{\sigma \in \frakS_d} z^{\exc(\sigma)}.
\]
The exponential generating function for these polynomials was originally given by Euler himself:
\begin{equation}\label{eq:generating_eulerian}
A(z,x) = 1 + \sum_{d \geq 1} A_d(z) \dfrac{x^d}{d!} = \dfrac{z-1}{z-e^{x(z-1)}}.
\end{equation}
See~\citep[Section 3]{foata10eulerian} for a derivation of this formula.

Let~$\frakC(S)$ the collection of \textbf{cyclic permutations} on a finite set~$S$, and $\frakC(d) = \frakC([d])$.
Given a permutation $\sigma \in \frakS_d$ and a block $S \in \supp(\sigma)$, the restriction $\sigma|_S$ of $\sigma$ to $S$ is a cyclic permutation. For example, with $\sigma$ as before and $S = \{2,5,6\} \in \supp(\sigma)$, we have $\sigma|_S = (265) \in \frakC(\{2,5,6\})$. A very simple but important observation is that the number of excedances of $\sigma$ can be computed by adding up the excedances in each cycle in its cycle decomposition. That is, $\exc(\sigma) = \sum_{S \in \supp(\sigma)} \exc(\sigma|_S)$. The number of excedances in each cycle $\sigma|_S$ is computed with respect to the natural order in $S \subseteq [d]$.

\subsection{The module Generalized permutahedra}

The permutahedron~$\pi_d \subseteq \RR^d$ is the convex hull of the~$\frakS_d$-orbit the point~$(1,2,\dots,d)$.
It is a zonotope of the braid arrangement~$\arr_d$ and has dimension~$d-1$.
Deformations of~$\pi_d$ are called \textbf{generalized permutahedra}.
We consider the module~$\Pi(\pi_d)$ as in Section~\ref{s:module}. The main goal of this section will be to prove the following result.

\begin{theorem}\label{t:dims_simul_e-spaces_A}
For any flat~$\rmX \in \fflat[\arr_d]$ and~$r=0,1,\dots,d-1$,
\[
\eta_\rmX(\Xi_r(\pi_d)) = \big| \{ \sigma \in \frakS_d \,:\, \supp(\sigma) = \rmX,\ \exc(\sigma) = r \}\big|.
\]
\end{theorem}

\begin{figure}[ht]
\begin{center}
\[
\begin{gathered}
\begin{tikzpicture}
\newdimen\R
\R=1.3cm
\draw [very thick, fill=gray!30!white] (0:\R) -- (60:\R) -- (120:\R) -- (180:\R) -- (240:\R) -- (300:\R) -- cycle;
\node [right] at (0:\R) {\scriptsize$(1,2,3)$};
\node [] at (60:1.15\R) {\scriptsize$(1,3,2)$};
\node [] at (120:1.15\R) {\scriptsize$(2,3,1)$};
\node [left] at (180:\R) {\scriptsize$(3,2,1)$};
\node [] at (240:1.15\R) {\scriptsize$(3,1,2)$};
\node [] at (300:1.15\R) {\scriptsize$(2,1,3)$};
\end{tikzpicture}
\end{gathered}
\hspace*{.2\linewidth}
\begin{gathered}
\tdplotsetmaincoords{60}{120}
\begin{tikzpicture}[tdplot_main_coords,scale=.7]
\coordinate (A) at (-1.414, 0.816, 2.309);
\coordinate (B) at (-1.414, 1.632, 1.154);
\coordinate (C) at (-0.707, -0.408, 2.309);
\coordinate (D) at (-0.707, 1.224, 0);
\coordinate (E) at (-0.707, 1.224, 3.464);
\coordinate (F) at (-0.707, 2.857, 1.154);
\coordinate (G) at (0, -0.816, 1.154);
\coordinate (H) at (0, 0, 0);
\coordinate (I) at (0, 0, 3.464);
\coordinate (J) at (0, 2.449, 0);
\coordinate (K) at (0, 2.449, 3.464);
\coordinate (L) at (0, 3.265, 2.309);
\coordinate (M) at (1.414, -0.816, 1.154);
\coordinate (N) at (1.414, 0, 0);
\coordinate (O) at (1.414, 0, 3.464);
\coordinate (P) at (1.414, 2.449, 0);
\coordinate (Q) at (1.414, 2.449, 3.464);
\coordinate (R) at (1.414, 3.265, 2.309);
\coordinate (S) at (2.121, -0.408, 2.309);
\coordinate (T) at (2.121, 1.224, 0);
\coordinate (U) at (2.121, 1.224, 3.464);
\coordinate (V) at (2.121, 2.857, 1.154);
\coordinate (W) at (2.828, 0.816, 2.309);
\coordinate (X) at (2.828, 1.632, 1.154);
\draw [dashed] (A) -- (B);
\draw [dashed] (A) -- (C);
\draw [dashed] (A) -- (E);
\draw [dashed] (B) -- (D);
\draw [dashed] (B) -- (F);
\draw [dashed] (C) -- (G);
\draw [dashed] (C) -- (I);
\draw [dashed] (D) -- (H);
\draw [dashed] (D) -- (J);
\draw [very thick] (E) -- (I);
\draw [very thick] (E) -- (K);
\draw [very thick] (F) -- (J);
\draw [very thick] (F) -- (L);
\draw [dashed] (G) -- (H);
\draw [dashed] (G) -- (M);
\draw [dashed] (H) -- (N);
\draw [very thick] (I) -- (O);
\draw [very thick] (J) -- (P);
\draw [very thick] (K) -- (L);
\draw [very thick] (L) -- (R);
\draw [very thick] (M) -- (N);
\draw [very thick] (M) -- (S);
\draw [very thick] (M) -- (S);
\draw [very thick] (N) -- (T);
\draw [very thick] (O) -- (S);
\draw [very thick] (O) -- (U);
\draw [very thick] (P) -- (T);
\draw [very thick] (P) -- (V);
\draw [very thick] (Q) -- (R);
\draw [very thick] (Q) -- (U);
\draw [very thick] (R) -- (V);
\draw [very thick] (S) -- (W);
\draw [very thick] (T) -- (X);
\draw [very thick] (U) -- (W);
\draw [very thick] (V) -- (X);
\draw [very thick] (W) -- (X);
\draw [very thick] (K) -- (Q);
\end{tikzpicture}
\end{gathered}
\]
\caption{The permutahedron in $\RR^3$ and $\RR^4$.}
\end{center}
\end{figure}

The relation between~$\Pi(\pi_d)$ and statistics on $\frakS_d$ is via the $h$-polynomial of $\pi_d$.
Brenti~\citep[Theorem 2.3]{brenti94cox-eul} showed that $h(\pi_d,z) = A_d(z)$. Moreover, for a flat/partition~$\rmX =\{S_1,\dots,S_k\}$ of~$\arr_d$, the face $(\pi_d)_\rmX$ is a translate of $\pi_{|S_1|} \times \dots \times \pi_{|S_k|}$, a product of lower-dimensional permutahedra. Thus,
\begin{equation}\label{eq:prod_perm}
h((\pi_d)_\rmX,z) = A_{|S_1|}(z) \cdot \ldots \cdot A_{|S_k|}(z).
\end{equation}

Lemma~\ref{l:cyc_exc} below is an essential ingredient in the proof of Theorem~\ref{t:dims_simul_e-spaces_A}. Its proof uses the Compositional Formula, for which a type B analog is proved in Proposition~\ref{p:typeB-compositional}.

\begin{theorem}[{The Compositional Formula \citep[Theorem 5.1.4]{stanley99EC2}}]\label{t:compositional_formula}
Let
\[
g(x) = 1 + \sum_{d \geq 1} g_d \dfrac{x^d}{d!} \qquad
a(x) = \sum_{d \geq 1} a_d \dfrac{x^d}{d!}.
\]
If
\[
h(x) = 1 + \sum_{d \geq 1} h_d \dfrac{x^d}{d!!}
\qquad \text{where} \qquad
h_d = \sum_{\{ S_1,S_2,\dots , S_k\} \vdash [d]} g_k a_{|S_1|} \dots a_{|S_k|},
\]
then
\[
h(x) = g(a(x)).
\]
\end{theorem}

\begin{lemma}\label{l:cyc_exc}
For every $d \geq 1$,
\begin{equation}\label{eq:lem_cyc_exc}
\sum_{\rmX = \{S_1,\dots,S_k\} \vdash [d]} \mu(\perp,\rmX) A_{|S_1|}(z) \cdot \ldots \cdot A_{|S_k|}(z) = \sum_{\sigma \in \frakC(d)} z^{\exc(\sigma)}.
\end{equation}
\end{lemma}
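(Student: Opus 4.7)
The plan is to prove the identity by Möbius inversion on $\flat[\arr_d]$, after first expressing $A_d(z)$ in terms of a ``cyclic Eulerian polynomial''
\[
C_m(z) := \sum_{\tau \in \frakC([m])} z^{\exc(\tau)} \qquad (m \geq 1).
\]

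\textbf{Step 1 (cycle decomposition of $A_d$).} For any finite totally ordered set $S$ of size $m$, the unique order isomorphism $S \cong [m]$ induces a bijection $\frakC(S) \to \frakC([m])$ preserving excedances, so $\sum_{\tau \in \frakC(S)} z^{\exc_\prec(\tau)} = C_{|S|}(z)$. Combined with the cycle-additivity identity $\exc(\sigma) = \sum_{S \in \supp(\sigma)} \exc(\sigma|_S)$ recalled just before the lemma, and grouping $\sigma \in \frakS_d$ by its support, this yields
\[
A_d(z) \;=\; \sum_{\rmY \vdash [d]} \prod_{T \in \rmY} C_{|T|}(z).
\]

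\textbf{Step 2 (block-wise extension).} Applying Step 1 to each block of an arbitrary partition $\rmX = \{S_1,\dots,S_k\} \vdash [d]$, and using that a flat $\rmY \geq \rmX$ in $\flat[\arr_d]$ is exactly a choice of partition $\rmY|_{S_i} \vdash S_i$ for each $i$ (with $\rmY$ being their disjoint union), one obtains
\[
\prod_{i=1}^k A_{|S_i|}(z) \;=\; \sum_{\rmY \geq \rmX} \prod_{T \in \rmY} C_{|T|}(z).
\]

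\textbf{Step 3 (Möbius inversion).} Setting $g(\rmX) := \prod_{S \in \rmX} A_{|S|}(z)$ and $f(\rmY) := \prod_{T \in \rmY} C_{|T|}(z)$, Step 2 reads $g(\rmX) = \sum_{\rmY \geq \rmX} f(\rmY)$ in $\flat[\arr_d]$. Möbius inversion gives $f(\rmX) = \sum_{\rmY \geq \rmX} \mu(\rmX,\rmY)\, g(\rmY)$; evaluating at the minimum flat $\rmX = \perp$ produces $f(\perp) = C_d(z) = \sum_{\sigma \in \frakC(d)} z^{\exc(\sigma)}$ on the left, and precisely the left-hand side of~\eqref{eq:lem_cyc_exc} on the right, using the explicit values of $\mu(\perp,\rmY)$ recorded in~\eqref{eq:mu_prod_A}.

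The whole argument is essentially bookkeeping; the only mild obstacle is Step 1, which relies on the additivity of excedances over cycles with respect to the inherited order. Equivalently, Steps 1--2 can be repackaged as the exponential-formula identity $A(x,z) = \exp(C(x,z))$ between the associated EGFs, so that $C(x,z) = \log A(x,z)$, and the classical coefficient formula for the logarithm of an EGF reproduces~\eqref{eq:lem_cyc_exc} directly.
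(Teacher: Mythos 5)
Your proof is correct, but it takes a different route from the paper. You work entirely inside the finite lattice $\flat[\arr_d]$: the cycle-additivity of excedances (together with the order isomorphism $S\cong[|S|]$) gives $\prod_i A_{|S_i|}(z)=\sum_{\rmY\geq\rmX}\prod_{T\in\rmY}C_{|T|}(z)$, and then dual Möbius inversion evaluated at $\perp$ yields \eqref{eq:lem_cyc_exc} directly — note that you never actually need the explicit values $\mu(\perp,\rmX)=(-1)^{k-1}(k-1)!$ from \eqref{eq:mu_prod_A}, only the abstract inversion formula, so that final remark in your Step 3 is superfluous. The paper instead proves the identity at the level of exponential generating functions: it shows both sides have EGF $\log(A(z,x))$, using the Compositional Formula with $\log(1+x)$ (whose coefficients are exactly $\mu(\perp,\rmX)$) for the left side and the Exponential Formula together with \eqref{eq:exc_cycles} for the right side — precisely the repackaging you sketch in your closing sentence. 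Your version is more elementary and self-contained for a fixed $d$; the paper's EGF formulation has the advantage that the intermediate identity ``EGF of cyclic excedance polynomials $=\log(A(z,x))$'' is reused verbatim in the proof of the type B analogue (Lemma~\ref{l:cyc_exc_B}) and, after a small modification, yields Brenti's bivariate generating function, so the generating-function detour is not wasted effort in the paper's overall development.
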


\begin{proof}
We will show that the exponential generating function of both sides of~\eqref{eq:lem_cyc_exc} are equal to~$\log (A(z,x))$, where~$A(z,x)$ is the generating function for the Eulerian polynomials in~\eqref{eq:generating_eulerian}.

First, recall that if $\rmX = \{S_1,\dots,S_k\}$, then~$\mu(\perp,\rmX) = (-1)^{k-1}(k-1)!$.
Thus, a direct application of the Compositional Formula shows that the exponential generating function of the LHS of~\eqref{eq:lem_cyc_exc} is the composition of
\[
\sum_{d \geq 1} (-1)^{d-1}(d-1)! \dfrac{x^d}{d!}
= \log(1+x)
\qquad \text{with} \qquad
\sum_{d \geq 1} A_d(z) \dfrac{x^d}{d!}
= A(z,x) - 1,
\]
which is precisely~$\log (A(z,x))$.

On the other hand, grouping permutations with the same underlying partition~$\supp(\sigma)$, we obtain
\[
A(z,x)
= 1 + \sum_{d \geq 1} \Big( \sum_{\sigma \in \frakS_d} z^{\exc(\sigma)} \Big) \dfrac{x^d}{d!}
= 1 + \sum_{d \geq 1} \Big( \sum_{\rmX \vdash [d]} \Big( \sum_{\substack{\sigma \in \frakS_d \\ \supp(\sigma) = \rmX}} z^{\exc(\sigma)} \Big) \Big) \dfrac{x^d}{d!}.
\]
Since a permutation $\sigma$ with $\supp(\sigma) = \rmX$ is the product of cyclic permutations $\sigma_S \in \frakC(S)$ for each block $S \in \rmX$, and in this case $\exc(\sigma) = \sum_{S \in \rmX} \exc(\sigma_S)$,
\begin{equation}\label{eq:exc_cycles}
\sum_{\substack{\sigma \in \frakS_d \\ \supp(\sigma) = \rmX}} z^{\exc(\sigma)}
= \prod_{S \in \rmX} \Big( \sum_{\sigma_S \in \frakC(S)} z^{\exc(\sigma_S)} \Big).
\end{equation}
Thus, the Exponential Formula~\citep[Corollary 5.1.6]{stanley99EC2} implies that
\[
A(z,x) = \exp\bigg( \sum_{d \geq 1} \Big( \sum_{\sigma \in \frakC(d)} z^{\exc(\sigma)} \Big) \dfrac{x^d}{d!} \bigg).
\]
Taking logarithms on both sides yields the result.
\end{proof}

A small modification in the proof of the previous Lemma immediately gives the following result, which was first discovered by Brenti.

\begin{corollary}[{\citep[Proposition 7.3]{brenti00qeulerian}}]
The following identity holds
\[
1 + \sum_{d \geq 1} \Big( \sum_{\sigma \in \frakS_d} t^{|\supp(\sigma)|} z^{\exc(\sigma)} \Big) \dfrac{x^n}{n!}
= \exp(t\log (A(z,x)))
= \left( \dfrac{z-1}{z-e^{x(z-1)}} \right)^t.
\]
\end{corollary}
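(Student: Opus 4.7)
The plan is to mimic the proof of Lemma~\ref{l:cyc_exc}, but carrying along the extra weight $t^{|\supp(\sigma)|}$. The key observation is that the cycle decomposition lets us write $t^{|\supp(\sigma)|} = \prod_{S \in \supp(\sigma)} t$, so that the weight $t$ contributes one factor per cycle. Combined with the cycle-additivity of excedances (which already appeared in~\eqref{eq:exc_cycles}), this makes the weighted generating function amenable to the Exponential Formula.

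Concretely, I would first group permutations by their underlying partition:
$$
1 + \sum_{d \geq 1} \Big( \sum_{\sigma \in \frakS_d} t^{|\supp(\sigma)|} z^{\exc(\sigma)} \Big) \dfrac{x^d}{d!}
= 1 + \sum_{d \geq 1} \Big( \sum_{\rmX \vdash [d]} \prod_{S \in \rmX} \Big( t \sum_{\sigma_S \in \frakC(S)} z^{\exc(\sigma_S)} \Big) \Big) \dfrac{x^d}{d!},
$$
using~\eqref{eq:exc_cycles} and the factorization of $t^{|\rmX|}$ over the blocks of $\rmX$. The Exponential Formula~\citep[Corollary 5.1.6]{stanley99EC2} then identifies the right-hand side with
$$
\exp\Big( t \sum_{d \geq 1} \Big( \sum_{\sigma \in \frakC(d)} z^{\exc(\sigma)} \Big) \dfrac{x^d}{d!} \Big).
$$

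Next, I would invoke the second half of the proof of Lemma~\ref{l:cyc_exc}, which showed precisely that the inner exponential generating function (without the factor $t$) equals $\log A(z,x)$. Substituting yields $\exp(t \log A(z,x))$, and plugging in Euler's formula~\eqref{eq:generating_eulerian} gives the claimed expression $\bigl((z-1)/(z - e^{x(z-1)})\bigr)^t$.

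There is essentially no obstacle: the proof is a routine bookkeeping variation on Lemma~\ref{l:cyc_exc}, and the only thing to be careful about is verifying that the weight $t^{|\supp(\sigma)|}$ indeed distributes over the cycles, so that the Exponential Formula applies with the connected species weighted by $t \cdot z^{\exc}$ rather than just $z^{\exc}$.
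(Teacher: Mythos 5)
Your proposal is correct and matches the paper's intended argument: the paper gives no separate proof, remarking only that the corollary follows from ``a small modification'' of the proof of Lemma~\ref{l:cyc_exc}, and that modification is exactly what you carry out---weighting each cycle (block) by $t$, applying the Exponential Formula, and reusing the identification of the cyclic generating function with $\log A(z,x)$.
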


An analogous formula for the type B Coxeter group is described in Proposition~\ref{p:bivariate-generating-B}.
We are now ready to prove the main result of this section.

\begin{proof}[Proof of Theorem~\ref{t:dims_simul_e-spaces_A}]
We will compute the values $\eta_\rmX(\Xi_r(\pi_d))$ using formula \eqref{eq:eta_simplicial_arr},
which in this case reads
\[
\sum_r \eta_\rmX(\Xi_r(\pi_d)) z^r
= \sum_{\rmY \,:\, \rmY \geq \rmX} \mu(\rmX,\rmY) h((\pi_d)_\rmY,z).
\]
Using~\eqref{eq:mu_prod_A} and~\eqref{eq:prod_perm}, we can rewrite the expression above as
\[
\sum_r \eta_\rmX(\Xi_r(\pi_d)) z^r
= \prod_{S \in \rmX} \Big( \sum_{\rmY=\{T_1,\dots,T_\ell\} \vdash S} \mu(\perp,\rmY) A_{|T_1|}(z) \cdot \ldots \cdot A_{|T_\ell|}(z) \Big).
\]
Now, an application of Lemma~\ref{l:cyc_exc} and relation~\eqref{eq:exc_cycles} gives
\[
\sum_r \eta_\rmX(\Xi_r(\pi_d)) z^r
= \prod_{S \in \rmX} \Big( \sum_{\sigma \in \frakC(S)} z^{\exc(\sigma)} \Big)
= \sum_{\substack{\sigma \in \frakS_d \\ \supp(\sigma) = \rmX}} z^{\exc(\sigma)}.
\]
Finally, taking the coefficient of~$z^r$ on both sides of the last equality yields the result.
\end{proof}

Adding over all flats with the same dimension in Theorem~\ref{t:dims_simul_e-spaces_A}, we conclude the following.

\begin{corollary}
Let~$w \in \tits[\arr_d]$ be a characteristic element of non-critical parameter~$t$ and $\lambda > 0$.
The dimension of the simultaneous eigenspace for $w$ and $\delta_\lambda$ with eigenvalues $t^k$ and $\lambda^r$ is
\[
\big| \{ \sigma \in \frakS_d \,:\, |\supp(\sigma)| = k,\ \exc(\sigma) = r \}\big|.
\]
\end{corollary}

\subsection{Simultaneous-eigenbasis for the Adams element}

Perhaps the most natural characteristic elements for the braid arrangement are the \textbf{Adams elements}, defined for any parameter $t$ by:
\[
\alpha_t = \sum_F {t \choose \dim(F)} \B{H}_F.
\]
It is invariant with respect to the action of~$\frakS_d$, and its action on $\tits[\arr_d]$-modules is closely related with the \emph{convolution powers of the identity map} of a Hopf monoid, see \citep[Section 14.4]{am13}.
The corresponding Eulerian idempotents are~\citep[Theorem 12.75]{am17}
\[
\B{E}_\rmX = \dfrac{1}{\dim(\rmX)!} \sum_{\supp(F) = \rmX} \sum_{G \geq F} \dfrac{(-1)^{\dim(G/F)}}{\deg(G/F)} \B{H}_G,
\]
where~$\dim(G/F) = \dim(G) - \dim(F)$ and,
$
\deg(G/F) = \prod_{S \in F} \big|G|_S\big|.
$

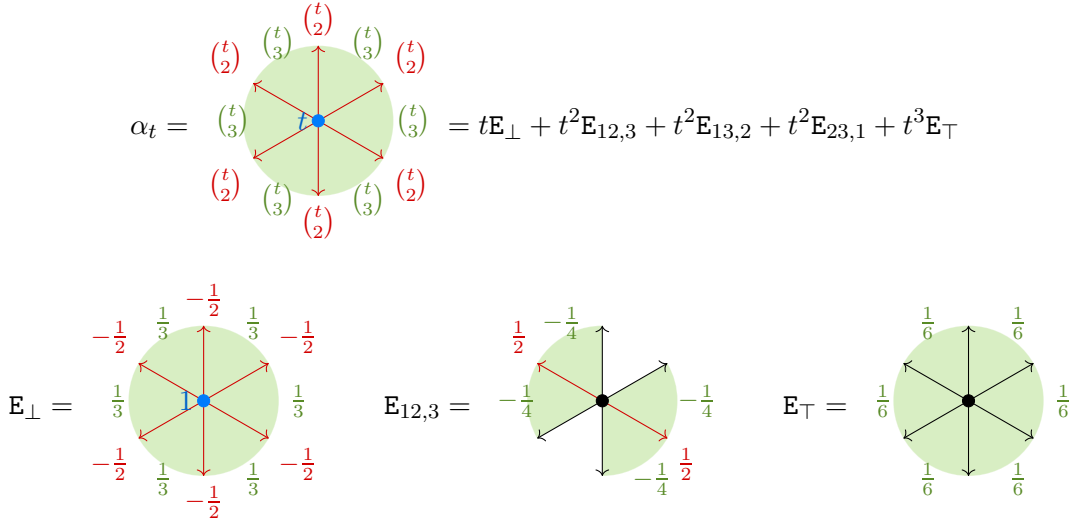
\begin{figure}[ht]
\[
\alpha_t = 
\begin{gathered}
\begin{tikzpicture}
\newdimen\R
\R=1cm \filldraw [fill=green1, draw=white, opacity = .3] (0:\R) arc (0:360:\R);
\draw [red1 , <->] (150:\R) node [above left] {\small${t \choose 2}$} -- (-30:\R) node [below right] {\small${t \choose 2}$};
\draw [red1 , <->] (210:\R) node [below left] {\small${t \choose 2}$} -- (30:\R) node [above right] {\small${t \choose 2}$};
\draw [red1 , <->] (-90:\R) node [below] {\small${t \choose 2}$} -- (0,0) node [blue1!80!black, left] {$t$} -- (90:\R) node [above] {\small${t \choose 2}$};
\draw [blue1, fill] (0,0) circle (.08);
\foreach \x in {0,60,120,180,240,300} {
\node [color=green1!70!black] at (\x:1.2\R) {\small~${t \choose 3}$};
}
\end{tikzpicture}
\end{gathered}
=
t \B{E}_\perp + 
t^2 \B{E}_{12,3} + t^2 \B{E}_{13,2} + t^2 \B{E}_{23,1} +
t^3 \B{E}_\top
\]
\[
\B{E}_\perp = 
\begin{gathered}
\begin{tikzpicture}
\newdimen\R
\R=1cm \filldraw [fill=green1, draw=white, opacity = .3] (0:\R) arc (0:360:\R);
\draw [red1 , <->] (150:\R) node [above left] {\small$-\tfrac{1}{2}$} -- (-30:\R) node [below right] {\small$-\tfrac{1}{2}$};
\draw [red1 , <->] (210:\R) node [below left] {\small$-\tfrac{1}{2}$} -- (30:\R) node [above right] {\small$-\tfrac{1}{2}$};
\draw [red1 , <->] (-90:\R) node [below] {\small$-\tfrac{1}{2}$} -- (0,0) node [blue1!80!black, left] {$1$} -- (90:\R) node [above] {\small$-\tfrac{1}{2}$};
\draw [blue1, fill] (0,0) circle (.08);
\foreach \x in {0,60,120,180,240,300} {
\node [color=green1!70!black] at (\x:1.2\R) {\small~$\tfrac{1}{3}$};
}
\end{tikzpicture}
\end{gathered}
\quad
\B{E}_{12,3} = 
\begin{gathered}
\begin{tikzpicture}
\newdimen\R
\R=1cm \filldraw [fill=green1, draw=white, opacity = .3] (0,0) -- (-90:\R) arc (-90:30:\R) -- cycle;
\filldraw [fill=green1, draw=white, opacity = .3] (0,0) -- (90:\R) arc (90:210:\R) -- cycle;
\draw [<->] (-90:\R) -- (90:\R);
\draw [red1 , <->] (150:\R) node [above left] {\small$\tfrac{1}{2}$} -- (-30:\R) node [below right] {\small$\tfrac{1}{2}$};
\draw [<->] (210:\R) -- (30:\R);
\draw [fill] (0,0) circle (.08);
\foreach \x in {0,120,180,300} {
\node [color=green1!70!black] at (\x:1.2\R) {\small~$-\tfrac{1}{4}$};
}
\end{tikzpicture}
\end{gathered}
\quad
\B{E}_{\top} = 
\begin{gathered}
\begin{tikzpicture}
\newdimen\R
\R=1cm \filldraw [fill=green1, draw=white, opacity = .3] (0:\R) arc (0:360:\R);
\draw [<->] (-90:\R) -- (90:\R);
\draw [<->] (150:\R) -- (-30:\R);
\draw [<->] (210:\R) -- (30:\R);
\draw [fill] (0,0) circle (.08);
\foreach \x in {0,60,120,180,240,300} {
\node [color=green1!70!black] at (\x:1.2\R) {\small~$\tfrac{1}{6}$};
}
\end{tikzpicture}
\end{gathered}
\]
\caption{The Adams element $\alpha_t$ and some of the associated Eulerian idempotents of the braid arrangement in $\RR^3$.}
\end{figure}

Theorem~\ref{t:dims_simul_e-spaces_A} suggest the existence of a natural basis for~$\Xi_r(\pi_d) \cdot \B{E}_\rmX$ indexed by permutations~$\sigma$ with~$r$ excedances and~$\supp(\sigma) = \rmX$.
In this section we will construct a candidate for such basis.

The standard simplex~$\Delta_{[d]} \subseteq \RR^d$ is the convex hull of the standard basis~$\{e_1,\dots,e_d\}$ of~$\RR^d$.
Similarly, for any nonempty subset~$S \subseteq [d]$, we let~$\Delta_S = \Conv\{e_i \,:\, i \in S\}$.

\begin{theorem}[{\citep{abd10volume,postnikov09}}]\label{t:typeA_gens}
Every generalized permutahedron~$\p$ can be written uniquely as a \emph{signed Minkowski sum} of the simplices $\{\Delta_S \,:\, S \subseteq [d] \}$.
That is, there is a unique choice of scalars $\{y_S\}_{S \subseteq [d]} \subseteq \RR$ such that
\[
\p + \sum_{y_S < 0} |y_S| \Delta_S = \sum_{y_S > 0} y_S \Delta_S.
\]
\end{theorem}

Up to translation, this is equivalent to the following identity in $\Xi_1(\pi_d)$:
\[
\log[\p]
= \sum_{S \subseteq [d]} y_S \log[\Delta_S].
\]
Given that~$\log[\Delta_S] = 0$ whenever~$S$ is a singleton, and that~$[\p] = [\q]$ if and only if~$\p$ is a translate of~$\q$,
we conclude that~$\{\log[\Delta_S] \,:\, S \subseteq [d],\, \big|S\big| \geq 2\}$ is a linear basis for~$\Xi_1(\pi_d)$ and therefore generate $\Pi(\pi_d)$ as an algebra.
This agrees with~$\dim_\RR(\Xi_1(\pi_d)) = h_1(\pi_d) = 2^d - d - 1$.

We will use a bijection between \emph{increasing rooted forests} on~$[d]$ and permutations in~$\frakS_d$.
An increasing rooted forest is a disjoint union of planar rooted trees where each child is larger than its parent and the children are in increasing order from left to right.
Given a rooted forest~$t$, the corresponding permutation~$\sigma(t)$ is read as follows.
Each connected component of~$t$ corresponds to a cycle of~$\sigma(t)$.
To form a cycle, traverse the corresponding tree counterclockwise and record a node the \emph{second} time you pass by it\footnote{A similar bijection is described by Peter Luschny in this \href{https://oeis.org/wiki/User:Peter_Luschny/PermutationTrees}{OEIS entry}.}.
\[
\begin{gathered}
\begin{tikzpicture}[scale = .6]
\draw (-1.5,-4) -- (-1.5,-3) -- (-1,-2) -- (-.5,-3);
\draw (-2,-2)  -- (-1.5,-1) -- (-1,-2);
\draw (-1.5,-1) -- (0,0) -- (0,-2);
\draw (0,0) -- (1.5,-1) -- (1,-2);
\draw (1.5,-1) -- (2,-2);
\node [fill=white] at (0,0) {$1$};
\node [fill=white] at (-1.5,-1) {$2$};
\node [fill=white] at (0,-1) {$4$};
\node [fill=white] at (1.5,-1) {$10$};
\node [fill=white] at (-2,-2) {$3$};
\node [fill=white] at (-1,-2) {$5$};
\node [fill=white] at (0,-2) {$9$};
\node [fill=white] at (1,-2) {$11$};
\node [fill=white] at (2,-2) {$12$};
\node [fill=white] at (-1.5,-3) {$6$};
\node [fill=white] at (-.5,-3) {$8$};
\node [fill=white] at (-1.5,-4) {$7$};
\draw [thick, blue1, ->] plot[smooth, tension=.7] coordinates
{(-.4,0) (-1.7,-.8) (-2.4,-2) (-2,-2.4) (-1.5,-1.7) (-1.3,-2) (-1.7,-3) (-1.7,-4) (-1.5,-4.3) (-1.3,-4) (-1.4,-3.5) (-1,-2.6) (-.7,-3) (-.5,-3.3) (-.3,-3) (-1,-1) (-.3,-.6) (-.2,-2) (-0,-2.3) (.2,-2) (.3,-.6) (1,-1) (.7,-2) (1,-2.3) (1.5,-1.9) (2,-2.3) (2.3,-2) (1.8,-.8) (.4,0)};
\end{tikzpicture}
\end{gathered}
\qquad\longmapsto\qquad
(3\,\, 7\,\, 6\,\, 8\,\, 5\,\, 2\,\, 9\,\, 4\,\, 11\,\, 12\,\, 10\,\, 1)
\]
The inverse can be described inductively by writing each cycle with its minimum element in the last position, and using right to left minima.
We omit the details, but provide an example~$\sigma \mapsto t(\sigma)$ to illustrate the idea.
\begin{equation}\label{eq:permtree1}
(7\, 3\, 6\, 9\, 5\, 1)(4\, 10\, 8\, 2)
\longmapsto
\begin{gathered}
\begin{tikzpicture}[scale = .8]
\draw (0,0) -- (0.5,1) -- (1,0);
\draw (2,0) -- (2.5,1) -- (3,0);
\node [fill=white] at (0.5,1) {$1$};
\node [fill=white] at (-.1,0) {$(73)$};
\node [fill=white] at (1,0) {$(695)$};
\node [fill=white] at (2.5,1) {$2$};
\node [fill=white] at (2,0) {$4$};
\node [fill=white] at (3,0) {$(10\, 8)$};
\end{tikzpicture}
\end{gathered}
\longmapsto
\begin{gathered}
\begin{tikzpicture}[scale = .8]
\draw (0,0) -- (1,2) -- (2,0) -- (1.5,1) -- (1,0);
\draw (3,1) -- (3.5,2) -- (4,1) -- (4,0);
\node [fill=white] at (1,2) {$1$};
\node [fill=white] at (0.5,1) {$3$};
\node [fill=white] at (0,0) {$7$};
\node [fill=white] at (1.5,1) {$5$};
\node [fill=white] at (1,0) {$6$};
\node [fill=white] at (2,0) {$9$};
\node [fill=white] at (3.5,2) {$2$};
\node [fill=white] at (3,1) {$4$};
\node [fill=white] at (4,1) {$8$};
\node [fill=white] at (4,0) {$10$};
\end{tikzpicture}
\end{gathered}
\end{equation}
This bijection is such that the connected components of the forest~$t(\sigma)$ are the blocks of~$\supp(\sigma)$.
Moreover, the number of leaves of~$t(\sigma)$ in~$S \in \supp(\sigma)$ is~$\exc(\sigma|_S)$ (a tree consisting only of its root has zero leaves).
Consequently, the total number of leaves of~$t(\sigma)$ is~$\exc(\sigma)$.

Let~$\sigma \in \frakS_d$ be a permutation with~$r$ excedances and let~$\rmX = \supp(\sigma)$.
For~$1 \leq i \leq r$, let~$J_i$ be the elements on the path from the~$i^{th}$ leaf of~$t(\sigma)$ to the root of the corresponding tree.
Define the element
\begin{equation}\label{eq:def_2-e-basis}
x_\sigma = \Big( \prod_{i=1}^r \log[\Delta_{J_i}] \Big) \cdot \B{E}_\rmX.
\end{equation}
For instance, if~$\sigma$ is the permutation in~\eqref{eq:permtree1}, then
\[
x_\sigma = \Big( \log[\Delta_{\{7,3,1\}}]\log[\Delta_{\{6,5,1\}}]\log[\Delta_{\{9,5,1\}}]\log[\Delta_{\{4,2\}}]\log[\Delta_{\{10,8,2\}}] \Big) \cdot \B{E}_{\rmX},
\]
where $\rmX = \{1,3,5,6,7,9\},\{2,4,8,10\}$.

\begin{conjecture}
For fixed~$\rmX \vdash [d]$ and~$r \leq d - \big|\rmX\big|$, the collection
\[
\{x_\sigma \,:\, \supp(\sigma) = \rmX,\ \exc(\sigma) = r\}
\]
is a linear basis of~$\Xi_r(\pi_d)\cdot \B{E}_\rmX$.
\end{conjecture}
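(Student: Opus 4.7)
The plan combines a dimension count, a reduction to the connected (cyclic) case, and the construction of a dual pairing encoded by the tree $t(\sigma)$.

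By Theorem~\ref{t:dims_simul_e-spaces_A}, the cardinality of the proposed indexing set equals $\dim_\RR \Xi_r(\pi_d) \cdot \B{E}_\rmX$, so it suffices to prove either spanning or linear independence of $\{x_\sigma\}$. The containment $x_\sigma \in \Xi_r(\pi_d) \cdot \B{E}_\rmX$ is automatic: each $\log[\Delta_{J_i}]$ lies in $\Xi_1(\pi_d)$, so $\prod_{i=1}^r \log[\Delta_{J_i}] \in \Xi_r(\pi_d)$, and right multiplication by the idempotent $\B{E}_\rmX$ preserves this graded component.

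Next, I would reduce to the case $\rmX = \{[d]\}$. For a general $\rmX = \{S_1, \dots, S_k\}$, the face $(\pi_d)_\rmX$ is a translate of $\pi_{|S_1|} \times \cdots \times \pi_{|S_k|}$, and permutations with $\supp(\sigma) = \rmX$ are in bijection with tuples $(\sigma_1, \dots, \sigma_k) \in \prod_i \frakC(S_i)$ in a way that identifies $\exc(\sigma) = \sum_i \exc(\sigma_i)$ and $t(\sigma) = \bigsqcup_i t(\sigma_i)$. Combined with the tensor product isomorphism $\Pi(\pi_d \times \pi_{d'}) \cong \Pi(\pi_d) \otimes \Pi(\pi_{d'})$ from the end of Section~\ref{s:module}, a block-wise factorization of $\B{E}_\rmX$ into top idempotents should identify the general statement with the tensor product of its cyclic instances.

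In the connected case, for $\sigma \in \frakC([d])$ with tree $t(\sigma)$ rooted at $1$, the strategy is to exhibit a family of face operators $\B{H}_{F_\sigma}$ and a linear functional $\chi$ such that the matrix $\chi(x_\tau \cdot \B{H}_{F_\sigma})$ is triangular with respect to a partial order on cyclic permutations with $r$ excedances. The explicit computation $[\Delta_S] \cdot \B{H}_F = [\Delta_{S \cap T_j}]$, where $T_j$ is the first block of $F$ meeting $S$, gives clean recursive control over the action of $\B{H}_F$ on $\prod_i \log[\Delta_{J_i}]$. A natural candidate for $F_\sigma$ is the set composition obtained by post-order traversal of $t(\sigma)$, which should produce a nonzero diagonal entry at $\tau = \sigma$ while collapsing the trees of all incomparable $\tau$.

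The main obstacle lies in expanding the Eulerian idempotent and verifying triangularity. The formula $\B{E}_\top = \tfrac{1}{d!}\sum_{F,\, G \geq F} \tfrac{(-1)^{\dim(G/F)}}{\deg(G/F)} \B{H}_G$ combined with the alternating expansion of each $\log[\Delta_{J_i}]$ produces a highly signed combinatorial sum. Making the triangularity manifest appears to require either a new inclusion--exclusion argument mirroring the role of the Exponential Formula in the proof of Lemma~\ref{l:cyc_exc}, or a conceptual interpretation of $\B{E}_\top$ via Theorem~\ref{t:McGP} as a projector onto the primitives of the Hopf monoid $\Pi$, so that the tree $t(\sigma)$ records an iterated coproduct decomposition of $x_\sigma$.
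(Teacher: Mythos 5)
This statement is a \emph{conjecture} in the paper: no proof is given, and none is currently known. The paper only records that $x_\sigma \in \Xi_r(\pi_d)\cdot\B{E}_\rmX$ by construction, that Theorem~\ref{t:dims_simul_e-spaces_A} reduces the claim to linear independence, that the cases $d=2,3,4$ have been checked by computer, and that the two extremal cases $r=1$ and $r=d-|\rmX|$ are established (Propositions~\ref{p:basisAdams1} and~\ref{p:basisAdams2}, which use quite different arguments: a leading-coefficient computation in the basis $\{\log[\Delta_J]\}$ for $r=1$, and the annihilation $x_\rmX\cdot\B{H}_G=0$ for $\supp(G)>\rmX$ together with Example~\ref{ex:prod_intervals} for $r=d-|\rmX|$).

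Your proposal correctly reproduces the two observations the paper itself makes --- the dimension count via Theorem~\ref{t:dims_simul_e-spaces_A} and the automatic containment in $\Xi_r(\pi_d)\cdot\B{E}_\rmX$ --- but everything after that is a plan, not a proof. The reduction to the cyclic case is plausible but unverified: you would need to show that the Adams--Eulerian idempotent $\B{E}_\rmX$ factors, under the identification $\Pi((\pi_d)_\rmX)\cong\bigotimes_i\Pi(\pi_{S_i})$, as a tensor product of top idempotents of the block arrangements, and this requires handling the average over the $\dim(\rmX)!$ orderings of the blocks in the formula for $\B{E}_\rmX$. More seriously, the triangularity argument in the connected case --- the choice of functional $\chi$, the faces $F_\sigma$, the partial order on cyclic permutations with $r$ excedances, and the verification that the diagonal entries are nonzero after expanding $\B{E}_\top$ --- is exactly the open content of the conjecture, and you explicitly flag it as the ``main obstacle'' without resolving it. So the proposal does not establish the statement; it outlines a reasonable line of attack whose crucial step is missing.
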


It follows from the definition \eqref{eq:def_2-e-basis} that~$x_\sigma \in \Xi_r(\pi_d)\cdot \B{E}_\rmX$.
The content of the conjecture is that these elements are linearly independent.
Explicit computations show that this is the case for~$d = 2,3,4$.
Propositions~\ref{p:basisAdams1} and~\ref{p:basisAdams2} below prove the extremal cases~$r = 1$ and~$r = d - |\rmX|$ of this conjecture,
respectively.

\begin{proposition}\label{p:basisAdams1}
For a subset~$J \subseteq [d]$ of cardinality at least~$2$, let~$\rmX_J \vdash [d]$ be the partition whose only non-singleton block is~$J$.
Then,
\[
\log[\Delta_J] \cdot \B{E}_{\rmX_J}
\]
is a nonzero element.
Furthermore,~$\{ \log[\Delta_J] \cdot \B{E}_{\rmX_J} \,:\, J \subseteq [d],\, |J| \geq 2\}$ is a basis of simultaneous eigenvectors for~$\Xi_1(\pi_d)$.
\end{proposition}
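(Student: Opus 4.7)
The plan is to combine Theorem~\ref{t:dims_simul_e-spaces_A} with a direct expansion of $\log[\Delta_J] \cdot \B{E}_{\rmX_J}$ in the basis $\{\log[\Delta_Y]\}_{|Y| \geq 2}$ of $\Xi_1(\pi_d)$ that was established just before the statement.

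First I would compute the invariants $\eta_\rmX(\Xi_1(\pi_d))$ using Theorem~\ref{t:dims_simul_e-spaces_A}. Since $\exc(\sigma) = \sum_{S \in \supp(\sigma)} \exc(\sigma|_S)$, a non-trivial cycle contributes at least one excedance and a fixed point contributes zero; so $\exc(\sigma) = 1$ forces $\sigma$ to consist of a single non-trivial cycle $\sigma|_J$ on some $J$ with $|J| \geq 2$ together with fixed points. A short case analysis shows that for each such $J$ there is exactly one cyclic permutation on $J$ with a single excedance, namely the one sending $\min J \mapsto \max J$ and then listing the remaining elements of $J$ in decreasing order. Hence $\eta_{\rmX_J}(\Xi_1(\pi_d)) = 1$ for each $J$ with $|J| \geq 2$, and $\eta_\rmX(\Xi_1(\pi_d)) = 0$ otherwise. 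The decomposition~\eqref{eq:M-decomposition} thus specialises to $\Xi_1(\pi_d) = \bigoplus_{|J| \geq 2} \Xi_1(\pi_d) \cdot \B{E}_{\rmX_J}$ with each summand one-dimensional, reducing the proposition to the single assertion that $\log[\Delta_J] \cdot \B{E}_{\rmX_J} \neq 0$ for every $J$ with $|J| \geq 2$.

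Next I would evaluate this element explicitly. A direct calculation shows that $\log[\Delta_J] \cdot \B{H}_G = \log[\Delta_{J \cap S_\ell}]$, where $G = (S_1, \dots, S_p) \vDash [d]$ and $S_\ell$ is the first block of $G$ meeting $J$; in particular it vanishes whenever $|J \cap S_\ell| \leq 1$. Analysing the relation $F \leq G$ in the braid arrangement reveals that the faces $G$ with nonzero coefficient in the Adams idempotent $\B{E}_{\rmX_J}$ are exactly those compositions of $[d]$ whose blocks inside $J$ form a consecutive subcomposition $(Y_1, \dots, Y_m) \vDash J$, interleaved with singletons from $[d] \setminus J$; such a $G$ satisfies $J \cap S_\ell = Y_1$ and has coefficient $\frac{(-1)^{m-1}}{(d-|J|+1)!\,m}$ in $\B{E}_{\rmX_J}$. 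Collecting the $(d-|J|+1)!$ interleavings sharing a common $J$-subcomposition then yields
$$
\log[\Delta_J] \cdot \B{E}_{\rmX_J} \;=\; \sum_{(Y_1, \dots, Y_m) \vDash J} \frac{(-1)^{m-1}}{m} \log[\Delta_{Y_1}] \;=\; \sum_{Y \subseteq J,\, |Y| \geq 2} C_Y \log[\Delta_Y]
$$
for explicit scalars $C_Y$.

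The key observation is that $C_J$ receives its only contribution from the trivial composition $(J)$ of $J$ (corresponding to $m=1$), giving $C_J = 1$. Since $\{\log[\Delta_Y] : |Y| \geq 2\}$ is linearly independent, the displayed sum is nonzero; hence $\log[\Delta_J] \cdot \B{E}_{\rmX_J}$ spans the one-dimensional simultaneous eigenspace $\Xi_1(\pi_d) \cdot \B{E}_{\rmX_J}$, and the collection $\{\log[\Delta_J] \cdot \B{E}_{\rmX_J} : |J| \geq 2\}$ forms the desired basis of simultaneous eigenvectors. I expect the main obstacle to be the middle step: correctly parametrising the faces $G$ appearing in the support of $\B{E}_{\rmX_J}$ and carrying out the bookkeeping that consolidates the $(d-|J|+1)!$ interleavings into the clean expression $\frac{(-1)^{m-1}}{m} \log[\Delta_{Y_1}]$.
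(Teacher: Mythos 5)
Your proposal is correct and follows essentially the same route as the paper: reduce via Theorem~\ref{t:dims_simul_e-spaces_A} to showing that each one-dimensional space $\Xi_1(\pi_d)\cdot\B{E}_{\rmX_J}$ is hit by a nonzero element, then show the coefficient of $\log[\Delta_J]$ in the expansion of $\log[\Delta_J]\cdot\B{E}_{\rmX_J}$ equals $1$. The paper extracts that single coefficient more directly (only the faces $G$ with $\supp(G)=\rmX_J$ can contribute to it, each weighted by $1/\dim(\rmX_J)!$), whereas you compute the full expansion over compositions of $J$; both come to the same thing.
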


\begin{proof}
First, observe that any cyclic permutation on a set with more than one element has at least one excedance, and only one cyclic permutation attains this minimum. Namely, the only cyclic permutation in $\frakS_d$ having one excedance is $(d \, d-1 \, \dots \, 2 \, 1)$.
Hence, a permutation~$\sigma \in \frakS_d$ has at least as many excedances as non-singleton blocks in~$\supp(\sigma)$.
It then follows from Theorem~\ref{t:dims_simul_e-spaces_A} that
\[
\dim_\RR(\Xi_1(\pi_d)\cdot \B{E}_\rmX) = \begin{cases}
1 & \text{if~$\rmX \vdash [d]$ has exactly one non-singleton block},\\
0 & \text{otherwise}.
\end{cases}
\]
Thus, the second statement follows from the first.

Since~$\{\log[\Delta_J] \,:\, J \subseteq [d],\, \big|J\big| \geq 2\}$ is a linear basis for~$\Xi_1(\pi_d)$,
it is enough to write~$\log[\Delta_J] \cdot \B{E}_{\rmX_J}$ as a non-trivial linear combination of these basis elements. Observe that if $F = (S_1,S_2,\dots,S_k)$, then~$[\Delta_J] \cdot \B{H}_F = [\Delta_{J \cap S_i}]$ where~$i$ is the first index for which the intersection~$J \cap S_i$ is nonempty. Thus,
\[
[\Delta_J] \cdot \B{H}_F = \begin{cases}
[\Delta_J] & \text{if }\supp(F) \leq \rmX_J,\\
[\text{a proper face of } \Delta_J] & \text{otherwise}.
\end{cases}
\]
Using that the action of $\B{H}_F$ is an algebra morphism, we have $\log[\Delta_J] \cdot \B{H}_F = \log([\Delta_J] \cdot \B{H}_F)$.
Hence, the coefficient of~$\log[\Delta_J]$ in~$\log[\Delta_J] \cdot \B{E}_{\rmX_J}$ is
\[
\dfrac{1}{\dim(\rmX_J)!} \sum_{\supp(F) = \rmX_J} 1 = 1.
\]
The equality follows since for any flat~$\rmX$ of the braid arrangement,~$\arr_d^\rmX$ has~$\dim(\rmX)!$ chambers.
\end{proof}

Note that the element~$\log[\Delta_J] \cdot \B{E}_{\rmX_J}$ in the proposition is precisely the element~$x_\sigma$ for the unique permutation~$\sigma$ with~$\supp(\sigma) = \rmX_J$ and~$\exc(\sigma) = 1$.
Indeed, $t(\sigma)$ consists of a increasing path whose nodes are the elements in $J$ and isolated roots indexed by the elements in $[d] \setminus J$.

\begin{proposition}\label{p:basisAdams2}
For any~$\rmX = \{S_1,\dots,S_k\} \vdash [d]$, the space~$\Xi_{d-k}(\pi_d) \cdot \B{E}_\rmX$ is~$1$-dimensional.
Moreover,
\begin{equation}\label{eq:basis_segments}
x_\rmX = \prod_{i=1}^k \Big( \prod_{j \neq \min(S_i)} \log[\Delta_{\{\min(S_i),j\}}] \Big)
\end{equation}
is a nonzero element in~$\Xi_{d-k}(\pi_d) \cdot \B{E}_\rmX$.
\end{proposition}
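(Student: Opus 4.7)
The dimension count in the first part follows directly from Theorem~\ref{t:dims_simul_e-spaces_A}. Writing $\rmX = \{S_1,\dots,S_k\}$, every $\sigma \in \frakS_d$ with $\supp(\sigma) = \rmX$ decomposes into cyclic factors $\sigma|_{S_i}$ satisfying $\exc(\sigma) = \sum_i \exc(\sigma|_{S_i})$. A cyclic permutation of a set of size $n$ has at most $n-1$ excedances, this maximum being attained only by the cycle sending each element to the next larger one (and the largest to the smallest). Summing, $\exc(\sigma) \leq d - k$ with equality forcing each $\sigma|_{S_i}$ to be this unique ``increasing'' cycle. Hence there is exactly one such $\sigma$, and $\eta_\rmX(\Xi_{d-k}(\pi_d)) = 1$.

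The heart of the argument is to compute $x_\rmX \cdot \B{H}_G$ for an arbitrary face $G = (T_1,\dots,T_\ell)$ of $\arr_d$. Since $\B{H}_G$ acts as an algebra endomorphism (Theorem~\ref{t:mod_str}) and, as noted in the proof of Proposition~\ref{p:basisAdams1}, $\log[\Delta_{\{a,b\}}] \cdot \B{H}_G = \log[\Delta_{\{a,b\} \cap T_m}]$ with $T_m$ the first block of $G$ meeting $\{a,b\}$, each factor of $x_\rmX$ either survives unchanged (when both endpoints of the segment lie in a common block of $G$) or vanishes (since $\log$ of a point class is zero). Taking the product over all factors,
$$
x_\rmX \cdot \B{H}_G \;=\; \begin{cases} x_\rmX & \text{if every $S_i$ is contained in a single block of $G$,} \\ 0 & \text{otherwise,} \end{cases}
$$
and the first case is exactly the condition $\supp(G) \leq \rmX$. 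Comparing with~\eqref{eq:elt-char} yields $x_\rmX \cdot w = \chi_\rmX(w)\, x_\rmX$ for every $w \in \tits[\arr_d]$, so $x_\rmX \cdot \B{E}_\rmY = \delta_{\rmX,\rmY}\, x_\rmX$ using the standard identity $\chi_\rmX(\B{E}_\rmY) = \delta_{\rmX,\rmY}$. In particular $x_\rmX \in \Pi(\pi_d) \cdot \B{E}_\rmX$; combined with $x_\rmX \in \Xi_{d-k}(\pi_d)$ (it is a product of $d-k$ elements of $\Xi_1$), this gives $x_\rmX \in \Xi_{d-k}(\pi_d) \cdot \B{E}_\rmX$.

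It remains to verify $x_\rmX \neq 0$. By translation invariance each factor equals $\log[\Conv\{{\bf 0}, e_j - e_{\min(S_i)}\}]$, so Example~\ref{ex:prod_intervals} reduces nonvanishing to the linear independence of the $d-k$ direction vectors $e_j - e_{\min(S_i)}$ for $i \in [k]$ and $j \in S_i \setminus \{\min(S_i)\}$. This is immediate: vectors coming from distinct blocks have disjoint coordinate supports, and those from a single block $S_i$ form a basis for the intersection of $\{\sum_a x_a = 0\}$ with the coordinate subspace indexed by $S_i$. The main obstacle is the face-by-face computation producing the clean formula $x_\rmX \cdot \B{H}_G = [\supp(G) \leq \rmX]\, x_\rmX$; once it is in place, the identification with $\B{E}_\rmX$ is automatic via the character $\chi_\rmX$.
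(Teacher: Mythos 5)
Your proposal is correct and follows essentially the same route as the paper: the same dimension count via the unique maximal-excedance cycle in each block, the same factor-by-factor computation showing that $x_\rmX \cdot \B{H}_G$ equals $x_\rmX$ or $0$ according to whether $\supp(G) \leq \rmX$, and the same nonvanishing argument via Example~\ref{ex:prod_intervals} and linear independence of the segment directions. The only (minor) difference is the final step: you deduce $x_\rmX \cdot \B{E}_\rmX = x_\rmX$ from $x_\rmX \cdot w = \chi_\rmX(w)\,x_\rmX$ together with $\chi_\rmX(\B{E}_\rmY) = \delta_{\rmX,\rmY}$, which works for any Eulerian family, whereas the paper substitutes directly into the explicit Adams idempotent; both are immediate once the $\B{H}_G$-computation is in place.
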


\begin{proof}
Observe that any cyclic permutation on a set with~$s$ elements has at most~$s-1$ excedances, and only one cyclic permutation attains this maximum.
Namely, the only cyclic permutation in $\frakS_d$ having $d-1$ excedances is $(1 \, 2 \, \dots \, d-1 \, d)$.
Hence, for any~$\rmX = \{S_1,\dots,S_k\} \vdash [d]$ there is exactly one permutation with~$\supp(\sigma) = \rmX$ and~$d-k$ excedances.
Theorem~\ref{t:dims_simul_e-spaces_A} then implies that
$
\dim_\RR(\Xi_{d-k}(\pi_d) \cdot \B{E}_\rmX) = 1.
$

It follows from Example~\ref{ex:prod_intervals} that the element~$x_\rmX$ is nonzero, and counting the number of factors in \eqref{eq:basis_segments} shows that $x_\rmX \in \Xi_{d-k}(\pi_d)$. Thus, we are only left to prove that~$x_\rmX \in \Xi_{d-k}(\pi_d) \cdot \B{E}_\rmX$. That is, that $x_\rmX \cdot \B{E}_\rmX = x_\rmX$

Let~$G \in \face[\arr_d]$ with~$\supp(G) > \rmX$.
Then, for some block~$S_i \in \rmX$ and some~$a \in S_i$,~$a$ and~$\min(S_i)$ are not in the same block of~$\supp(G)$.
Hence~$[\Delta_{\{\min(S_i),a\}}] \cdot \B{H}_G$ is the class of a point, and~$\log[\Delta_{\{\min(S_i),a\}}]\cdot \B{H}_G = \log[\{{\bf 0}\}] = 0$.
Since the action of $\B{H}_G$ is an algebra morphism, we get that $x_\rmX \cdot \B{H}_G = 0$.
Therefore,
\[
x_\rmX \cdot \B{E}_\rmX = x_\rmX \cdot \Big( \dfrac{1}{\dim(\rmX)!} \sum_{\supp(F) = \rmX} \B{H}_F \Big) 
= \dfrac{1}{\dim(\rmX)!} \sum_{\supp(F) = \rmX} x_\rmX \cdot \B{H}_F
= x_\rmX,
\]
as we wanted to show.
\end{proof}

The element $x_\rmX$ in the previous result is $x_\sigma$ for the only permutation $\sigma$ with $\supp(\sigma) = \rmX$ and $\exc(\sigma) = d - |\rmX|$.
In this case, the forest $t(\sigma)$ consists of a $k$ trees with node sets $S_1,\dots,S_k$, respectively.
Each tree has $\min(S_i)$ as a root and every other element in $S_i$ as a child of the root.

\section{The module of type B generalized permutahedra}\label{s:typeB}

In this section, we study the algebra $\Pi(\pi^B_d)$ of \emph{type B generalized permutahedra} and its structure as a module over the Tits algebra of the Coxeter arrangement of type B.

\subsection{The type B Coxeter arrangement}

The \textbf{type B Coxeter arrangement}~$\arr^\pm_d$ in~$\RR^d$ consists of the hyperplanes
$x_i=x_j$,~$x_i=-x_j$ for~$1 \leq i<j \leq d$ and~$x_k=0$ for~$1 \leq k \leq d$.
Its central face is the trivial cone $\{{\bf 0}\}$.
The \emph{Coxeter complex of type~$B_d$}, obtained by intersecting the arrangement with a sphere around the origin in~$\RR^d$, is shown below for~$d=2$ and~$3$.
\[
\begin{gathered}
\begin{tikzpicture}[scale=1.5]
\draw circle (1);
\node [circle, inner sep=1.4pt,fill=cyan,draw] at (0:1) {};
\node [circle, inner sep=1.4pt,fill=magenta,draw] at (45:1) {};
\node [circle, inner sep=1.4pt,fill=cyan,draw] at (90:1) {};
\node [circle, inner sep=1.4pt,fill=magenta,draw] at (135:1) {};
\node [circle, inner sep=1.4pt,fill=cyan,draw] at (180:1) {};
\node [circle, inner sep=1.4pt,fill=magenta,draw] at (225:1) {};
\node [circle, inner sep=1.4pt,fill=cyan,draw] at (270:1) {};
\node [circle, inner sep=1.4pt,fill=magenta,draw] at (315:1) {};
\end{tikzpicture}
\end{gathered}
\hspace*{.2\linewidth}
\begin{gathered}
\begin{tikzpicture}[scale=0.6]
\newdimen\R
\R=2.5cm \coordinate (madhya) at (0,0);
\draw (madhya) circle (\R); \coordinate (P) at (-1.2,0);
\coordinate (Q) at (1.2,0);
\coordinate (R) at (0,1.2);
\coordinate (S) at (0,-1.2);
\coordinate (A1) at (0:\R);
\coordinate (B1) at (180:\R);
\draw (A1) -- (B1);
\arcThroughThreePoints{A1}{R}{B1};
\arcThroughThreePoints{B1}{S}{A1};
\coordinate (A2) at (45:\R);
\coordinate (B2) at (225:\R);
\draw (A2) -- (B2);
\coordinate (A3) at (90:\R);
\coordinate (B3) at (270:\R);
\draw (A3) -- (B3);
\arcThroughThreePoints{A3}{P}{B3};
\arcThroughThreePoints{B3}{Q}{A3};
\coordinate (A4) at (135:\R);
\coordinate (B4) at (315:\R);
\draw (A4) -- (B4);
\path[name intersections={of=kamaanA1 and kamaanB3, by=a1b3}];
\path[name intersections={of=kamaanB1 and kamaanA3, by=b1a3}];
\path[name intersections={of=kamaanA1 and kamaanA3, by=a1a3}];
\path[name intersections={of=kamaanB1 and kamaanB3, by=b1b3}];
\node [circle, inner sep=1.5pt,fill=orange,draw] at (madhya) {};
\node [circle, inner sep=1.5pt,fill=cyan,draw] at (a1b3) {};
\node [circle, inner sep=1.5pt,fill=cyan,draw] at (b1a3) {};
\node [circle, inner sep=1.5pt,fill=cyan,draw] at (a1a3) {};
\node [circle, inner sep=1.5pt,fill=cyan,draw] at (b1b3) {};
\node [circle, inner sep=1.5pt,fill=magenta,draw] at (P) {};
\node [circle, inner sep=1.5pt,fill=magenta,draw] at (Q) {};
\node [circle, inner sep=1.5pt,fill=magenta,draw] at (R) {};
\node [circle, inner sep=1.5pt,fill=magenta,draw] at (S) {};
\node [circle, inner sep=1.5pt,fill=orange,draw] at (A1) {};
\node [circle, inner sep=1.5pt,fill=orange,draw] at (B1) {};
\node [circle, inner sep=1.5pt,fill=magenta,draw] at (A2) {};
\node [circle, inner sep=1.5pt,fill=magenta,draw] at (B2) {};
\node [circle, inner sep=1.5pt,fill=orange,draw] at (A3) {};
\node [circle, inner sep=1.5pt,fill=orange,draw] at (B3) {};
\node [circle, inner sep=1.5pt,fill=magenta,draw] at (A4) {};
\node [circle, inner sep=1.5pt,fill=magenta,draw] at (B4) {};
\end{tikzpicture}
\end{gathered}
\]
Flats and faces of $\arr^\pm_d$ are in correspondence with signed set partitions and signed set compositions of $[\pm d] := \{-d,-d+1,\dots,-1,1,\dots,d-1,d\}$, as originally introduced by Reiner~\citep{reiner97noncrossing}.

Let~$I$ be a finite set with an fixed point free involution~$x \mapsto \opp{x}$.
For instance, $[\pm d]$ with involution $\opp{x} = -x$.
A subset $S \subseteq I$ is said to be \textbf{involution-exclusive} if $S \cap \opp{S} = \emptyset$, where $\opp{S} = \{\opp{x} \,:\, x \in S\}$.
In contrast, $S \subseteq I$ is said to be \textbf{involution-inclusive} if $S = \opp{S}$.
Given an arbitrary subset $S \subseteq I$, we let $\pm S$ be the involution-inclusive set $S \cup \opp{S}$.

A \textbf{signed set partition} of~$I$ is a weak set partition of the form~$\rmX = \{S_0,S_1,\opp{S_1},\dots,S_k,\opp{S_k}\}$, where $S_0$ is involution-inclusive and allowed to be empty, and each $S_i$ for $i \neq 0$ is nonempty and involution-exclusive. We call $S_0$ the \textbf{zero block} of $\rmX$.
We write~$\rmX \vdash^B I$ to denote that~$\rmX$ is a signed partition of~$I$.
Given a signed partition~$\rmX \vdash^B [\pm d]$, the corresponding flat of $\arr^\pm_d$ is the intersection of the hyperplanes $x_i = x_j$ for each $i,j$ in the same block of $\rmX$, where for $k \in [d]$, we let $x_{\opp{k}}$ denote $-x_k$.
In particular, if $k \in [d]$ is in the zero block of $\rmX$, the corresponding flat lies in the hyperplane $x_k = 0$.
For instance, consider the following two examples for $d = 7$:
\[
\begin{array}{CCC}
x_1 = x_3, \quad x_2 = -x_4 = x_5, \quad x_6 = x_7
& \hspace*{.05\linewidth} \longleftrightarrow \hspace*{.05\linewidth} & 
\{ \emptyset , 13, \bar{1}\bar{3}, 2\bar{4}5 , \bar{2}4\bar{5} , 67 , \bar{6}\bar{7} \}, \\
x_1 = x_3 = 0, \quad x_2 = -x_4 = x_5, \quad x_6 = x_7
& \longleftrightarrow & 
\{ 1\bar{1}3\bar{3}, 2\bar{4}5 , \bar{2}4\bar{5} , 67 , \bar{6}\bar{7} \}. \\
\end{array}
\]
The zero block in the first partition is empty since the corresponding flat is not contained in any coordinate hyperplane. We use $\rmX$ to denote both a flat of $\arr^\pm_d$ and the corresponding signed partition of $[\pm d]$.
Observe that the number of nonzero blocks of $\rmX$ is $2\dim(\rmX)$.
The partial order relation of $\fflat[\arr^\pm_d]$ becomes the ordering by refinement of signed partitions. For instance,
$\{ 1\bar{1}3\bar{3}, 2\bar{4}5 , \bar{2}4\bar{5} , 67 , \bar{6}\bar{7} \} \leq \{ \emptyset , 13, \bar{1}\bar{3}, 2\bar{4}5 , \bar{2}4\bar{5} , 67 , \bar{6}\bar{7} \} \leq \{ \emptyset , 13, \bar{1}\bar{3}, 25 , \bar{2}\bar{5}, 4, \bar{4} , 67 , \bar{6}\bar{7} \}$.

If $S$ is an involution-inclusive union of blocks of $\rmX \vdash^B I$, we let $\rmX|_S \vdash^B S$ denote the corresponding signed partition.
If, on the other hand, $S$ is an involution-exclusive union of blocks of $\rmX \vdash^B I$, we let $\rmX|_S \vdash S$ denote the corresponding (type A) partition.

Let~$\rmX = \{S_0,S_1,\opp{S_1},\dots,S_k,\opp{S_k}\}$.
A choice of $\rmY \geq \rmX$ is equivalent to the choice of a signed partition $\rmY|_{S_0} \vdash^B S_0$ of the zero block and of (type A) partitions $\rmY|_{S_i} \vdash S_i$ for $i = 1,\dots, k$.
Note that in this case, $\rmY|_{\opp{S_i}}$ is automatically determined by $\rmY|_{S_i}$.

With $\rmX$ and $\rmY$ as above, the Möbius function of~$\fflat[\arr^\pm_d]$ is determined by
\begin{equation}\label{eq:mu_prod_B}
\mu(\perp,\rmX) = (-1)^k(2k-1)!!
\qqand
\mu(\rmX,\rmY) = \mu(\perp,\rmY|_{S_0}) \mu(\perp,\rmY|_{S_1}) \dots \mu(\perp,\rmY|_{S_k}),
\end{equation}
where~$(2k-1)!!$ is the double factorial~$(2k-1)!! = (2k-1)(2k-3)\dots 1$,
and in each factor $\perp$ denotes the minimum (signed) partition of ($S_0$) $S_i$.

A \textbf{signed composition} is an ordered signed set partition with the property that $S_i$ precedes $S_j$ if and only if $\opp{S_j}$ precedes $\opp{S_i}$.
The following examples illustrate the identification between signed compositions of $\pm [d]$ and faces of $\arr^\pm_d$:
\[
\begin{array}{CCC}
x_6 = x_7 > -x_2 = x_4 = -x_5 > x_1 = x_3 > 0 
& \hspace*{.05\linewidth} \longleftrightarrow \hspace*{.05\linewidth} &
(67,\bar{2}4\bar{5},13,\emptyset,\bar{1}\bar{3},2\bar{4}5,\bar{6}\bar{7})\\
x_6 = x_7 > -x_2 = x_4 = -x_5 > x_1 = x_3 = 0 
& \longleftrightarrow &
(67,\bar{2}4\bar{5},1\bar{1}3\bar{3},2\bar{4}5,\bar{6}\bar{7})
\end{array}
\]
Note that we can alternatively describe the first face with the inequalities $0 > -x_1 = -x_3 > x_2 = -x_4 = x_5 > -x_6 = -x_7$.

\subsection{The hyperoctahedral group and the Type B Eulerian polynomial}

The hyperoctahedral group~$\frakB_d$ is the group of bijections~$\sigma : [\pm d] \rightarrow [\pm d]$ satisfying~$\sigma(\opp{i}) = \opp{\sigma(i)}$ for all $i \in [\pm d]$ under composition. Elements in~$\frakB_d$ are called \textbf{signed permutations}. 
The group~$\frakB_d$ acts on~$\RR^d$ by permutation and sign changes of coordinates:
\[
\sigma(x_1,x_2,\dots,x_d) = (x_{\sigma(1)},x_{\sigma(2)},\dots,x_{\sigma(d)}).
\]
Recall that, for instance, $x_{\opp{1}} = -x_1$.
For a signed permutation $\sigma \in \frakB_d$, we let~$\supp(\sigma)$ denote the subspace of points fixed by the action of~$\sigma$; it is a flat of~$\arr^\pm_d$.
Under the identification above,~$\supp(\sigma)$ is the signed partition of~$[\pm d]$ obtained from the underlying the cycle decomposition of~$\sigma$ by merging all the blocks that contain an element~$i$ and its negative~$\opp{i}$.
For example, if in cycle notation $\sigma = (1)(\bar{1})(2\bar{2})(34\bar{3}\bar{4})(5\bar{6})(\bar{5}6)$, then $\supp(\sigma) = \{2\bar{2}3\bar{3}4\bar{4},1,\bar{1},5\bar{6},\bar{5}6\}$.

Let~$\sigma \in \frakB_n$.
The restriction~$\sigma|_{S_0}$ to the zero block~$S_0 \in \supp(\sigma)$ is a signed permutation of~$S_0$.
Its action on~$\RR^{|S_0|/2}$ does not fix any nonzero vector, so~$\supp(\sigma|_{S_0}) = \perp$.
For a nonzero block~$S\in \supp(\sigma)$,~$\sigma|_S \in \frakC(S)$ is a cyclic permutation of the elements in~$S$.
The restriction~$\sigma|_{\pm S}$ is again a signed permutation, and it is completely determined by either~$\sigma|_S$ or~$\sigma|_{\opp{S}}$.

We present some statistics on signed permutations.
For~$\sigma \in \frakB_d$, let
\begin{align*}
\Des(\sigma) & = \{i \in [d-1] \cup \{0\} \,:\, \sigma(i) > \sigma(i+1)\} &
\des(\sigma) & = \big|\Des(\sigma)\big| \\
\Exc(\sigma) & = \{i \in [d-1] \,:\, \sigma(i) > i\} &
\exc(\sigma) & = \big|\Exc(\sigma)\big| \\
\Neg(\sigma) & = \{i \in [d] \,:\, \sigma(i) < 0\} &
\fneg(\sigma) & = \big|\Neg(\sigma)\big| \\
& &
\fexc(\sigma) & = 2 \exc(\sigma) + \fneg(\sigma),
\end{align*}
where we set $\sigma(0) = 0$.
Elements in the sets above are \textbf{descents}, \textbf{excedances} and \textbf{negations} of~$\sigma$, respectively.
The last statistic is called the \textbf{flag-excedance} of a signed permutation.
We define one last statistic, the B-excedance of~$\sigma$:
\begin{equation}\label{eq:def_B-exc}
\exc_B(\sigma) = \lfloor \tfrac{\fexc(\sigma)+1}{2} \rfloor = \exc(\sigma) + \lfloor \tfrac{\fneg(\sigma)+1}{2} \rfloor.
\end{equation}
Foata and Han~\citep[Section 9]{fh09signedV} show that descents and B-excedances are equidistributed.
That is,
\[
B_{d,k} := \big|\{ \sigma \in \frakB_d \,:\, \des(\sigma) = k \}\big|
=
\big|\{ \sigma \in \frakB_d \,:\, \exc_B(\sigma) = k \}\big|,
\]
for all possible values of~$k$.
The numbers~$B_{d,k}$ are the \textbf{Eulerian numbers of type B} (OEIS: \href{https://oeis.org/A060187}{A060187}).
The \textbf{type B Eulerian polynomial}~$B_d(z)$ is:
\[
B_d(z) = \sum_{k=0}^d B_{d,k}z^k = \sum_{\sigma \in \frakB_d} z^{\exc_B(\sigma)}.
\]
The exponential generating function of these polynomials is first due to Brenti~\citep[Theorem 3.4]{brenti94cox-eul}.
We will be interested in the \textbf{type B exponential generating function} of these polynomials:
\begin{equation}\label{eq:generating_eulerian_B}
B(z,x) = 1 + \sum_{d \geq 1} B_d(z) \dfrac{x^d}{(2d)!!} = \dfrac{(1-z)e^{x(1-z)/2}}{1-ze^{x(1-z)}},
\end{equation}
where~$(2d)!!$ is the double factorial~$(2d)!! = (2d)(2d-2)\dots 2 = 2^d d!$.
Substituting~$x$ by~$2x$ one recovers Brenti's original formula.

\subsection{The module of type B Generalized permutahedra}

The type B permutahedron~$\pi^B_d \subseteq \RR^d$ is the convex hull of the $\frakB_d$-orbit of the point~$(1,2,\dots,d)$.
It is full-dimensional and a zonotope of~$\arr^\pm_d$.
We now consider the module~$\Pi(\pi^B_d)$.
The main result of this section is the following.

\begin{theorem}\label{t:dims_simul_e-spaces_B}
For any flat~$\rmX \in \fflat[\arr^\pm_d]$ and~$r=0,1,\dots, d$,
\[
\eta_\rmX(\Xi_r(\pi^B_d)) = \big|\{ \sigma \in \frakB_d \,:\, \supp(\sigma) = \rmX,\ \exc_B(\sigma) = r \}\big|.
\]
\end{theorem}

\begin{figure}[ht]
\[
\begin{gathered}
\begin{tikzpicture}[scale=.6]
\draw [very thick, fill=gray!30!white] (2,1) node [right] {\scriptsize$(2,1)$}
-- (1,2) node [above right] {\scriptsize$(1,2)$}
-- (-1,2) node [above left] {\scriptsize$(-1,2)$}
-- (-2,1) node [left] {\scriptsize$(-2,1)$}
-- (-2,-1) node [left] {\scriptsize$(-2,-1)$}
-- (-1,-2) node [below left] {\scriptsize$(-1,-2)$}
-- (1,-2) node [below right] {\scriptsize$(1,-2)$}
-- (2,-1) node [right] {\scriptsize$(2,-1)$}
-- cycle;
\end{tikzpicture}
\end{gathered}
\hspace*{.2\linewidth}
\begin{gathered}
\tdplotsetmaincoords{60}{120}
\begin{tikzpicture}[tdplot_main_coords,scale=.6]
\draw [very thick] (-1, 2, 3) -- (1, 2, 3);
\draw [very thick] (1, -2, 3) -- (2, -1, 3);
\draw [very thick] (-1, -2, 3) -- (1, -2, 3);
\draw [very thick] (-2, 1, 3) -- (-1, 2, 3);
\draw [very thick] (-2, -1, 3) -- (-2, 1, 3);
\draw [very thick] (-2, -1, 3) -- (-1, -2, 3);
\draw [very thick] (2, -1, 3) -- (2, 1, 3);
\draw [very thick] (1, 2, 3) -- (2, 1, 3);
\draw [very thick] (2, 1, -3) -- (3, 1, -2);
\draw [very thick] (2, -1, -3) -- (3, -1, -2);
\draw [very thick] (2, -1, -3) -- (2, 1, -3);
\draw [very thick] (3, -1, -2) -- (3, 1, -2);
\draw [very thick] (2, -3, -1) -- (2, -3, 1);
\draw [very thick] (2, -3, 1) -- (3, -2, 1);
\draw [very thick] (2, -3, -1) -- (3, -2, -1);
\draw [very thick] (3, -2, -1) -- (3, -2, 1);
\draw [very thick] (1, 3, -2) -- (2, 3, -1);
\draw [very thick] (1, 2, -3) -- (2, 1, -3);
\draw [very thick] (1, 2, -3) -- (1, 3, -2);
\draw [very thick] (2, 3, -1) -- (3, 2, -1);
\draw [very thick] (3, 1, -2) -- (3, 2, -1);
\draw [very thick] (-1, 3, 2) -- (1, 3, 2);
\draw [very thick] (-2, 3, 1) -- (-1, 3, 2);
\draw [very thick] (-1, 3, -2) -- (1, 3, -2);
\draw [very thick] (-2, 3, -1) -- (-2, 3, 1);
\draw [very thick] (-2, 3, -1) -- (-1, 3, -2);
\draw [very thick] (2, 3, -1) -- (2, 3, 1);
\draw [very thick] (1, 3, 2) -- (2, 3, 1);
\draw [very thick] (-1, 2, 3) -- (-1, 3, 2);
\draw [very thick] (1, 2, 3) -- (1, 3, 2);
\draw [very thick] (1, -3, 2) -- (2, -3, 1);
\draw [dashed] (1, -3, -2) -- (2, -3, -1);
\draw [very thick] (-1, -3, 2) -- (1, -3, 2);
\draw [dashed] (-1, -3, -2) -- (1, -3, -2);
\draw [dashed] (-2, -3, -1) -- (-2, -3, 1);
\draw [dashed] (-2, -3, -1) -- (-1, -3, -2);
\draw [dashed] (-2, -3, 1) -- (-1, -3, 2);
\draw [very thick] (1, -3, 2) -- (1, -2, 3);
\draw [very thick] (2, -1, 3) -- (3, -1, 2);
\draw [very thick] (3, -2, 1) -- (3, -1, 2);
\draw [dashed] (1, -2, -3) -- (2, -1, -3);
\draw [dashed] (1, -3, -2) -- (1, -2, -3);
\draw [very thick] (3, -2, -1) -- (3, -1, -2);
\draw [very thick] (-1, -3, 2) -- (-1, -2, 3);
\draw [very thick] (-3, 2, 1) -- (-2, 3, 1);
\draw [very thick] (-3, 1, 2) -- (-3, 2, 1);
\draw [very thick] (-3, 1, 2) -- (-2, 1, 3);
\draw [dashed] (-1, -2, -3) -- (1, -2, -3);
\draw [dashed] (-1, 2, -3) -- (1, 2, -3);
\draw [dashed] (-2, 1, -3) -- (-1, 2, -3);
\draw [dashed] (-2, -1, -3) -- (-1, -2, -3);
\draw [dashed] (-2, -1, -3) -- (-2, 1, -3);
\draw [dashed] (-1, -3, -2) -- (-1, -2, -3);
\draw [dashed] (-1, 2, -3) -- (-1, 3, -2);
\draw [dashed] (-3, 2, -1) -- (-3, 2, 1);
\draw [dashed] (-3, 2, -1) -- (-2, 3, -1);
\draw [dashed] (-3, 1, -2) -- (-3, 2, -1);
\draw [dashed] (-3, -1, 2) -- (-3, 1, 2);
\draw [dashed] (-3, -2, -1) -- (-3, -2, 1);
\draw [dashed] (-3, -2, -1) -- (-3, -1, -2);
\draw [dashed] (-3, -1, -2) -- (-3, 1, -2);
\draw [dashed] (-3, -2, 1) -- (-3, -1, 2);
\draw [dashed] (-3, 1, -2) -- (-2, 1, -3);
\draw [dashed] (-3, -1, 2) -- (-2, -1, 3);
\draw [dashed] (-3, -2, -1) -- (-2, -3, -1);
\draw [dashed] (-3, -2, 1) -- (-2, -3, 1);
\draw [dashed] (-3, -1, -2) -- (-2, -1, -3);
\draw [very thick] (2, 1, 3) -- (3, 1, 2);
\draw [very thick] (3, -1, 2) -- (3, 1, 2);
\draw [very thick] (2, 3, 1) -- (3, 2, 1);
\draw [very thick] (3, 2, -1) -- (3, 2, 1);
\draw [very thick] (3, 1, 2) -- (3, 2, 1);
\end{tikzpicture}
\end{gathered}
\]
\caption{Type B permutahedron in $\RR^2$ and $\RR^3$.}
\end{figure}

As in type A, the relation between~$\Pi(\pi^B_d)$ and statistics on~$\frakB_d$ is due to Brenti's result showing that $h(\pi^B_d,z) = B_d(z)$.
For a flat~$\rmX = \{S_0,S_1,\opp{S_1},\dots,S_k,\opp{S_k}\}$ of~$\arr^\pm_d$,
the face $(\pi^B_d)_\rmX$ is a translate of $\pi^B_{|S_0|/2} \times \pi_{|S_1|} \times \dots \times \pi_{|S_k|}$, a product of lower-dimensional permutahedra of type A and B, where exactly one factor is of type B.
Thus,
\begin{equation}\label{eq:prod_perm_B}
h((\pi^B_d)_\rmX,z) = B_{|S_0|/2}(z) \cdot A_{|S_1|}(z) \cdot \ldots \cdot A_{|S_k|}(z).
\end{equation}

The following result is the analogous in type B of Lemma~\ref{l:cyc_exc}.

\begin{lemma}\label{l:cyc_exc_B}
For every $d \geq 1$,
\begin{equation}\label{eq:lem_cyc_exc_B}
\sum_{\rmX = \{S_0,\dots,S_k,\opp{S_k}\} \vdash^B [\pm d]} \mu(\perp,\rmX) B_{|S_0|/2}(z) \cdot A_{|S_1|}(z) \cdot \ldots \cdot A_{|S_k|}(z)
=
\sum_{\substack{\sigma \in \frakB_d \\ \supp(\sigma) = \perp}} z^{\exc_B(\sigma)}.
\end{equation}
\end{lemma}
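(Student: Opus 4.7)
The plan is to mirror the proof of Lemma~\ref{l:cyc_exc}: I will show that both sides of \eqref{eq:lem_cyc_exc_B}, viewed as exponential generating functions with the normalization $\frac{x^d}{(2d)!!}$, equal $B(z,x)/\sqrt{A(z,x)}$.

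For the LHS, work in the variable $y = x/2$ so that $B(z,2y)$ is a standard EGF. A signed partition $\rmX = \{S_0,S_1,\opp{S_1},\dots,S_k,\opp{S_k}\}$ is the same data as a choice of zero block $T_0\subseteq[d]$, an unsigned partition $\{T_1,\dots,T_k\}$ of $[d]\setminus T_0$, and for each $T_j$ a signing $S_j$ modulo global negation (contributing $2^{|T_j|-1}$ options per block). The zero block contributes the EGF factor $B(z,2y)$, while the nonzero-block sum, by the Compositional Formula, is $\sum_{k\geq 0}(-1)^k(2k-1)!!\,\frac{g(y)^k}{k!}$ with $g(y) = \sum_{d\geq 1} 2^{d-1}A_d(z)\frac{y^d}{d!} = \tfrac12(A(z,2y)-1)$. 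Using the identity
$$
\sum_{k\geq 0}(-1)^k(2k-1)!!\,\frac{u^k}{k!} = \frac{1}{\sqrt{1+2u}},
$$
which follows from $(2k-1)!! = (2k)!/(2^k k!)$ and the standard expansion of $(1-4w)^{-1/2}$, the nonzero contribution collapses to $1/\sqrt{A(z,2y)}$, and the LHS EGF equals $B(z,2y)/\sqrt{A(z,2y)}$.

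For the RHS, I establish a type~B Exponential Formula. Every $\sigma\in\frakB_d$ decomposes uniquely as $\sigma|_{S_0}$ (a signed permutation with trivial support on the zero block) together with one cyclic permutation $\sigma|_{S_j}$ per nonzero paired block $\{S_j,\opp{S_j}\}$. The critical combinatorial input is the additive decomposition
$$
\exc_B(\sigma) = \exc_B(\sigma|_{S_0}) + \sum_j \bigl(\exc^{\text{pair}}_j(\sigma) + \tfrac{1}{2}\fneg^{\text{pair}}_j(\sigma)\bigr),
$$
where $\exc^{\text{pair}}_j$ and $\fneg^{\text{pair}}_j$ collect the contributions to $\exc$ and $\fneg$ coming from positive indices in $S_j\cup\opp{S_j}$. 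The nontrivial point is that each $\fneg^{\text{pair}}_j$ is \emph{even}: the sign changes encountered while traversing a cycle that contains no $\pm$-pair must cancel in order to return to the starting sign. This parity allows the ceiling in $\exc_B = \exc + \lceil\fneg/2\rceil$ to distribute across the zero/nonzero split. A direct count, using the order-preserving bijection $S_j\to[|S_j|]$ and the type A formula $\log A(z,x) = \sum_{d\geq 1}c_d(z)\frac{x^d}{d!}$, then shows that summing $z$ to the pair contribution over all unordered signed cycle pairs on $[m]$ yields $e_m(z) = 2^{m-1}c_m(z)$; equivalently, each unordered pair is counted twice among the $2^m(m-1)!$ ordered (signing, cycle) data, and the statistic averages out to $\exc$ of a cyclic permutation of $[m]$. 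The Exponential Formula now gives $B(z,2y) = \bar b(y)\cdot\exp\bigl(\sum_{m\geq 1}2^{m-1}c_m(z)\frac{y^m}{m!}\bigr) = \bar b(y)\,\sqrt{A(z,2y)}$, where $\bar b(y) = \sum_{d\geq 0}b_d(z)\frac{y^d}{d!}$ with $b_d(z)$ the RHS of \eqref{eq:lem_cyc_exc_B}. Solving gives $\bar b(y) = B(z,2y)/\sqrt{A(z,2y)}$, matching the LHS EGF.

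The principal obstacle is the parity lemma $\fneg^{\text{pair}}_j \equiv 0 \pmod 2$ and the resulting additive decomposition of $\exc_B$---a combinatorial statement with no analogue in the type A case, since there is no ceiling operation to worry about there. Once it is in place, the generating-function manipulations proceed in close parallel with the type A proof of Lemma~\ref{l:cyc_exc}, with the new analytic identity for the double-factorial series playing the role played by $\log(1+u)$ in that proof.
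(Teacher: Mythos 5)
Your proof is correct and follows essentially the same route as the paper's: both sides are shown to have type~B exponential generating function $B(z,x)/\sqrt{A(z,x)}$, using as the two key inputs the type~B compositional/exponential formulas (which you re-derive inline via the $2^{|T_j|-1}$ signing count in the variable $y=x/2$) and the block-additivity of $\exc_B$, which rests on the evenness of $\fneg$ on each non-zero cycle pair exactly as in the paper's Proposition~\ref{p:exc_b_additive}. Your double-factorial identity $\sum_k(-1)^k(2k-1)!!\,u^k/k!=(1+2u)^{-1/2}$ is the paper's $1+\sum_d(-1)^d(2d-1)!!\,x^d/(2d)!!=(1+x)^{-1/2}$ after the substitution $x=2u$.
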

In the same spirit as the proof of Lemma~\ref{l:cyc_exc}, we will establish~\eqref{eq:lem_cyc_exc_B} by comparing the type B exponential generating function of both sides of the equality.
An important tool in this proof is the following analog of the Compositional Formula (Theorem~\ref{t:compositional_formula}) for type B generating functions.

\begin{proposition}[Type B Compositional Formula]\label{p:typeB-compositional}
Let
\[
f(x) = 1 + \sum_{d \geq 1} f_d \dfrac{x^d}{(2d)!!} \qquad
g(x) = 1 + \sum_{d \geq 1} g_d \dfrac{x^d}{(2d)!!} \qquad
a(x) = \sum_{d \geq 1} a_d \dfrac{x^d}{d!}.
\]
If
\[
h(x) = 1 + \sum_{d \geq 1} h_d \dfrac{x^d}{(2d)!!}
\quad \text{where} \quad
h_d = \sum_{\{ S_0 , S_1,\opp{S_1}, \dots , S_k , \opp{S_k} \} \vdash^B [\pm d]} f_{|S_0|/2} g_k a_{|S_1|} \dots a_{|S_k|},
\]
then
\[
h(x) = f(x)g(a(x)).
\]
\end{proposition}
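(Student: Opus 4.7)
The plan is to reduce this type B compositional formula to the ordinary (type A) Compositional Formula by changes of variable that toggle between the two EGF conventions $\tfrac{x^d}{d!}$ and $\tfrac{x^d}{(2d)!!}$ appearing in the statement. Recall that $(2d)!! = 2^d d!$.

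First I would factor the sum by the zero block. Every signed partition $\{S_0,S_1,\opp{S_1},\dots,S_k,\opp{S_k}\}\vdash^B [\pm d]$ is determined by the choice of a subset $T\subseteq [d]$ (so that $S_0 = T\cup\opp{T}$) together with a signed partition of $[\pm([d]\setminus T)]$ having empty zero block. Setting
\[
\psi(m) := \sum_{\substack{\{S_1,\opp{S_1},\dots,S_k,\opp{S_k}\}\,\vdash^B\, [\pm m] \\ S_0 = \emptyset}} g_k \prod_{i=1}^{k} a_{|S_i|},
\]
this grouping gives $h_d = \sum_{d_0=0}^{d}\binom{d}{d_0} f_{d_0}\,\psi(d-d_0)$. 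Since $\binom{d}{d_0}/(2d)!! = 1/\bigl((2d_0)!!\,(2(d-d_0))!!\bigr)$, the Cauchy product yields $h(x) = f(x)\cdot \Psi(x)$, where $\Psi(x) := 1+\sum_{m\geq 1}\psi(m)\, x^m/(2m)!!$. It then remains to prove $\Psi(x)=g(a(x))$.

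Next, I would convert the sum defining $\psi(m)$ into a sum over ordinary set partitions of $[m]$. A signed partition of $[\pm m]$ with empty zero block is precisely an (unsigned) set partition $\{B_1,\dots,B_k\}\vdash [m]$ together with, for each block $B_i$, a sign assignment $B_i\to\{\pm 1\}$ considered modulo simultaneous negation (because the pair $\{S_i,\opp{S_i}\}$ is unordered). There are $2^{|B_i|-1}$ such sign patterns, so
\[
\psi(m) = \sum_{\{B_1,\dots,B_k\}\,\vdash\, [m]} g_k \prod_{i=1}^{k} 2^{|B_i|-1}\, a_{|B_i|}.
\]
The ordinary Compositional Formula~\citep[Theorem 5.1.4]{stanley99EC2}, applied with block weight $\tilde a_n := 2^{n-1} a_n$, therefore gives $\sum_{m\geq 0}\psi(m)\, x^m/m! = \tilde g\bigl(\tilde a(x)\bigr)$, where $\tilde g(y) = \sum g_k y^k/k!$ and $\tilde a(x) = \tfrac{1}{2}a(2x)$. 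Substituting $x\mapsto x/2$ rescales the left-hand side to $\Psi(x)$ and the right-hand side to $\tilde g\bigl(\tfrac{1}{2}a(x)\bigr) = \sum_k g_k\, a(x)^k/(2^k k!) = g(a(x))$, completing the proof.

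The only real obstacle is bookkeeping the two EGF conventions consistently. All the combinatorial content is in the identification of signed partitions with empty zero block as ordinary partitions weighted by $2^{|B|-1}$ per block; after that, the classical Compositional Formula together with the rescalings $x\leftrightarrow 2x$ and $x\leftrightarrow x/2$ does the work.
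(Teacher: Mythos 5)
Your proof is correct and follows essentially the same route as the paper's: both reduce the statement to the ordinary Compositional Formula together with the count of $2^{|B|-1}$ sign patterns per block, which is exactly the paper's step of choosing, for each $j \in K_i\setminus\{\max K_i\}$, whether $j$ lies in the same or the opposite block as $\max K_i$. The only difference is organizational — you split off the zero block as a Cauchy factor and perform the rescalings $x\leftrightarrow 2x$, $x\leftrightarrow x/2$ at the level of generating functions, whereas the paper verifies the same identity coefficientwise.
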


\begin{proof}
Using the usual Compositional Formula, the coefficient of~$\dfrac{x^d}{(2d)!!}$ in~$f(x)g(a(x))$ is
\[
\begin{array}{RL}
& 2^d \, d! \sum_{r = 0}^d \dfrac{f_r}{2^r \, r!} \left( \dfrac{1}{(d-r)!} \sum_{\{K_1,\dots,K_k\} \vdash [d-r]} \dfrac{g_k}{2^k} \, a_{|K_1|} \dots a_{|K_k|} \right) \\
= & \sum_{r = 0}^d {d \choose r} \left( \sum_{\{K_1,\dots,K_k\} \vdash [d-r]} 2^{d-r-k} \, f_r \, g_k \, a_{|K_1|} \dots a_{|K_k|} \right) \\
= & \sum_{\{ S_0 , S_1,\opp{S_1}, \dots , S_k , \opp{S_k} \} \vdash^B [\pm d]} f_{|S_0|/2} \, g_k \, a_{|S_1|} \dots a_{|S_k|},
\end{array}
\]
this is precisely the coefficient of~$\dfrac{x^d}{(2d)!!}$ in~$h(x)$.
To verify the last equality, note that choosing a type B partition~$\{ S_0 , S_1,\opp{S_1}, \dots , S_k , \opp{S_k} \} \vdash^B [\pm d]$ with~$|S_0| = 2r$ is equivalent to:
\begin{enumerate}
\item choosing a subset~$K_0 \in {[d] \choose r}$ and setting~$S_0 = \pm K_0$,
\item choosing a partition~$\{K_1,\dots,K_k\}$ of~$[d] \setminus K_0$, and
\item constructing blocks $\{S_i,\opp{S_i}\}$ from $K_i$ as follows: for each~$j \in K_i \setminus \{\max K_i\}$, choose whether~$j$ and $\max K_i$ will be in the same or in \emph{opposite} blocks.
\end{enumerate}
There are precisely $2^{d-r-k}$ possible choices in the last step.
\end{proof}

Taking~$g_d = 1$ in the Type B Compositional Formula we deduce the following.

\begin{corollary}[Type B Exponential Formula]
Let~$f(x)$ and~$a(x)$ be as before.
If
\[
h(x) = 1 + \sum_{d \geq 1} h_d \dfrac{x^d}{(2d)!!}
\qquad \text{where} \qquad
h_d = \sum_{\rmX \vdash^B [\pm d]} f_{|S_0|/2} a_{|S_1|} \dots a_{|S_k|},
\]
then
\[
h(x) = f(x)\, \exp(a(x)/2).
\]
\end{corollary}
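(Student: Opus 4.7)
The plan is to obtain the Type B Exponential Formula as a direct corollary of the Type B Compositional Formula established in the preceding proposition, by specializing to the constant sequence $g_d = 1$ for all $d \geq 1$. With this choice the factor $g_k$ trivializes and the combinatorial sum defining $h_d$ in the Compositional Formula reduces precisely to the sum over signed set partitions $\rmX \vdash^B [\pm d]$ appearing in the hypothesis of the Corollary:
$$
h_d \;=\; \sum_{\{S_0,S_1,\opp{S_1},\dots,S_k,\opp{S_k}\} \vdash^B [\pm d]} f_{|S_0|/2}\, g_k\, a_{|S_1|}\cdots a_{|S_k|}
\;=\; \sum_{\rmX \vdash^B [\pm d]} f_{|S_0|/2}\, a_{|S_1|}\cdots a_{|S_k|}.
$$

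The only computation to perform is the evaluation of the type B exponential generating function of the constant sequence $g_d = 1$. Using that $(2d)!! = 2^d\, d!$,
$$
g(x) \;=\; 1 + \sum_{d \geq 1} \frac{x^d}{(2d)!!} \;=\; \sum_{d \geq 0} \frac{1}{d!}\left(\frac{x}{2}\right)^d \;=\; \exp(x/2).
$$
Substituting into the conclusion $h(x) = f(x)\,g(a(x))$ of the Compositional Formula yields
$$
h(x) \;=\; f(x)\,\exp(a(x)/2),
$$
as required. There is no substantive obstacle here, since the content of the corollary is entirely contained in the proposition; a direct combinatorial proof would simply repeat the three-step decomposition in the proof of the proposition (choose the involution-inclusive zero block $S_0 = \pm K_0$, partition the remaining elements into $\{K_1,\dots,K_k\}$, then record $2^{d-|K_0|-k}$ orientation choices) with the $g$-factor suppressed.
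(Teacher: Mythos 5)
Your proposal is correct and matches the paper's intended argument exactly: the paper derives this corollary precisely by setting $g_d = 1$ in the Type B Compositional Formula, and your computation $g(x) = \sum_{d \geq 0} (x/2)^d/d! = \exp(x/2)$ supplies the only detail the paper leaves implicit.
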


In the proof of Theorem~\ref{t:dims_simul_e-spaces_A}, we used that for (type A) permutations~$\sigma \in \frakS_d$,~$\exc(\sigma)$ equals the sum of~$\exc(\sigma|_S)$ as~$S$ runs through the blocks of~$\supp(\sigma)$.
For signed permutations, one easily checks this also holds for the statistics~$\exc$ and~$\fneg$.
However, it is not obvious at all that the same is true for~$\exc_B$, since its definition uses the floor function.

Consider the order~$\prec$ of the elements of any involution-exclusive subset~$S \subseteq [\pm d]$ defined by:
\begin{equation}\label{eq:ord_exc}
i \prec j \Longleftrightarrow \begin{cases}
0 < i < j, \text{ or}\\
i < 0 < j, \text{ or}\\
j < i < 0.
\end{cases}
\end{equation}

\begin{proposition}\label{p:exc_b_additive}
Let~$\sigma \in \frakB_d$ and~$\supp(\sigma) = \{S_0,S_1,\opp{S_1},\dots,S_k,\opp{S_k}\}$.
Then,
\[
\exc_B(\sigma) = \exc_B(\sigma|_{S_0}) + \exc_\prec(\sigma|_{S_1}) + \dots + \exc_\prec(\sigma|_{S_k}),
\]
where~$\exc_\prec(\sigma|_{S_i})$ is the number of usual \textup{(}type A\textup{)} excedances of~$\sigma|_{S_i}$ with respect to the order~$\prec$.
\end{proposition}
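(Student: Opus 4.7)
The plan is to reduce the identity to a blockwise additivity of $\fexc$, from which the floor-function relation $\exc_B(\sigma) = \lfloor (\fexc(\sigma)+1)/2 \rfloor$ yields the result. First, I would partition the positive indices $[d]$ according to where they sit in the support of $\sigma$: let $K_0 = S_0 \cap [d]$ and, for each $j = 1, \dots, k$, set $P_j = S_j \cap [d]$ and $N_j = \opp{S_j} \cap [d]$. Both $\exc(\sigma)$ and $\fneg(\sigma)$ split additively over this partition into contributions $\exc_{K_0}(\sigma), \exc_{P_j}(\sigma), \exc_{N_j}(\sigma), \fneg_{K_0}(\sigma), \fneg_{P_j}(\sigma), \fneg_{N_j}(\sigma)$ obtained by restricting the index $i$ in the defining sets to the corresponding subset of $[d]$.

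Second, using that each nonzero block $S_j$ is a single cycle of $\sigma$, I would invoke the elementary fact that the number of sign changes around any cycle is even and splits evenly into $+\to-$ and $-\to+$ transitions. Since $\fneg_{P_j}$ counts the $+\to-$ transitions in the cycle $\sigma|_{S_j}$, while via $\sigma(i) = \opp{\sigma(\opp{i})}$ the quantity $\fneg_{N_j}$ counts the $-\to+$ transitions, this yields $\fneg_{P_j}(\sigma) = \fneg_{N_j}(\sigma)$, and hence $\fneg(\sigma) = \fneg_{K_0}(\sigma) + 2\sum_{j} \fneg_{N_j}(\sigma)$.

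Third, and this is the main step, I would compute $\exc_\prec(\sigma|_{S_j})$ by case analysis on the signs of $x$ and $\sigma(x)$ for $x \in S_j$. From the definition of $\prec$, exactly three of the four sign patterns produce a $\prec$-excedance at $x$: namely $(+,+)$ with $\sigma(x) > x$ in the usual order, $(-,+)$ unconditionally, and $(-,-)$ with $|\sigma(x)| > |x|$. The positive-to-positive count equals $\exc_{P_j}(\sigma)$ on the nose. The two negative cases I would rewrite by setting $x = \opp{i}$ for $i \in N_j$ and using $\sigma(i) = \opp{\sigma(\opp{i})}$: the $(-,+)$ positions correspond to $i \in N_j$ with $\sigma(i) < 0$, contributing $\fneg_{N_j}(\sigma)$, while the $(-,-)$ positions with larger image absolute value correspond to $i \in N_j$ with $\sigma(i) > 0$ and $\sigma(i) > i$, contributing $\exc_{N_j}(\sigma)$. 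Combining, $\exc_\prec(\sigma|_{S_j}) = \exc_{P_j}(\sigma) + \exc_{N_j}(\sigma) + \fneg_{N_j}(\sigma)$. For the zero block, viewing $\sigma|_{S_0}$ as a signed permutation of $K_0$ under the order inherited from $[d]$, the definitions give $\exc_B(\sigma|_{S_0}) = \exc_{K_0}(\sigma) + \lfloor (\fneg_{K_0}(\sigma)+1)/2 \rfloor$ directly.

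Finally, I would sum the block contributions and apply the elementary identity $\lfloor (a + 2b + 1)/2 \rfloor = b + \lfloor (a+1)/2 \rfloor$ (valid for any integer $a$ and any $b \in \ZZ_{\geq 0}$) with $a = \fneg_{K_0}(\sigma)$ and $b = \sum_j \fneg_{N_j}(\sigma)$, which produces
\[
\exc_B(\sigma|_{S_0}) + \sum_{j=1}^k \exc_\prec(\sigma|_{S_j}) = \exc(\sigma) + \lfloor (\fneg(\sigma)+1)/2 \rfloor = \exc_B(\sigma),
\]
as desired. I expect the main obstacle to be the case analysis in the third step: the order $\prec$ places negatives before positives and orders negatives by \emph{increasing} absolute value, so naive expectations (such as that a $-\to+$ transition cannot be a $\prec$-excedance) are incorrect. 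Tracking the interaction between the bar-involution, the asymmetric definition of $\prec$, and the usual excedance and negation statistics on the positive indices $P_j$ and $N_j$ is the delicate part of the argument.
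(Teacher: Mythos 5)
Your proof is correct and follows essentially the same route as the paper: the three-way case analysis of $\prec$-excedances (your $\exc_{P_j}+\exc_{N_j}+\fneg_{N_j}$ is exactly the paper's $\exc(\sigma|_{\pm S_j})+\tfrac{1}{2}\fneg(\sigma|_{\pm S_j})$), the evenness of sign changes around each cycle, and the final floor-function manipulation all match the paper's argument, differing only in bookkeeping notation.
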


\begin{proof}
For~$i \geq 1$, write~$\sigma|_{S_i} = (j_1 j_2 \dots j_\ell)$ in cycle notation.
Since~$\sigma|_{\opp{S_i}} = (\opp{j_1} \, \opp{j_2} \dots \opp{j_\ell})$,
negations of~$\sigma|_{\pm S_i}$ are in correspondence with changes of sign in the sequence
\[
j_1 \mapsto j_2 \mapsto \dots \mapsto j_\ell \mapsto j_1.
\]
It follows that
\[
\fneg(\sigma|_{\pm S_i}) = 2 \cdot \big|\{j \in S_i \,:\, j < 0 < \sigma(j) \}\big|
\]
is an even number.

Observe that according to the three cases of definition~\eqref{eq:ord_exc}, a~$\prec$-excedance of~$\sigma|_{S_i}$ corresponds to either
\[
\begin{cases}
\text{an excedance of } \sigma|_{\pm S_i} \text{ occurring in } S_i, \text{ or}\\
\text{a negation of } \sigma|_{\pm S_i} \text{ occurring in } \opp{S_i}, \text{ or}\\
\text{an excedance of } \sigma|_{\pm S_i} \text{ occurring in } \opp{S_i},
\end{cases}
\]
respectively.
Since exactly half of the negations of~$\sigma|_{\pm S_i}$ occur in~$\opp{S_i}$, we deduce that
\[
\exc_\prec(\sigma|_{S_i}) = \exc(\sigma|_{\pm S_i}) + \tfrac{\fneg(\sigma|_{\pm S_i})}{2}.
\]
Thus, in view of~\eqref{eq:def_B-exc} and using that $\tfrac{\fneg(\sigma|_{\pm S_i})}{2}$ is always an integer,
\[
\begin{array}{RL}
\exc_B(\sigma)
= & \exc(\sigma) + \lfloor \tfrac{\fneg(\sigma)+1}{2} \rfloor \\
= & \exc(\sigma|_{S_0}) + \sum_i \exc(\sigma|_{\pm S_i}) + \lfloor \tfrac{\fneg(\sigma|_{S_0})+\sum_i\fneg(\sigma|_{\pm S_i})+1}{2} \rfloor \\
= & \exc(\sigma|_{S_0}) + \sum_i \exc(\sigma|_{\pm S_i}) + \lfloor \tfrac{\fneg(\sigma|_{S_0})+1}{2} \rfloor +\sum_i \tfrac{\fneg(\sigma|_{\pm S_i})}{2} \\
= & \exc_B(\sigma|_{S_0}) + \exc_\prec(\sigma|_{S_1}) + \dots + \exc_\prec(\sigma|_{S_k}),
\end{array}
\]
as we wanted to show.
\end{proof}

\begin{proof}[Proof of Lemma~\ref{l:cyc_exc_B}]
Recall that~$\mu(\perp,\rmX) = (-1)^k (2k-1)!!$, where~$|\rmX| = 2k+1$.
Observe that
\[
1 + \sum_{d \geq 1} (-1)^d (2d-1)!! \dfrac{x^d}{(2d)!!} = \sum_{d \geq 0} {-1/2 \choose d} x^d = (1+x)^{-1/2}.
\]
Using the Type B Compositional formula, we conclude that the type B exponential generating function of the LHS of~\eqref{eq:lem_cyc_exc_B} is
\[
B(z,x)(1+(A(z,x)-1))^{-1/2} - 1= \dfrac{B(z,x)}{\sqrt{A(z,x)}} - 1,
\]
where~$A(z,x)$ and~$B(z,x)$ are the generating functions in~\eqref{eq:generating_eulerian} and~\eqref{eq:generating_eulerian_B}, respectively.
On the other hand, Proposition~\ref{p:exc_b_additive} shows that for each partition~$\rmX = \{S_0,S_1,\opp{S_1},\dots,S_k,\opp{S_k}\} \vdash^B [\pm d]$,
\begin{equation}\label{eq:exc_cycles_B}
\sum_{\substack{\sigma \in \frakB_d \\ \supp(\sigma) = \rmX}} z^{\exc_B(\sigma)}
= \Big( \sum_{\substack{\sigma_0 \in \frakB(S_0) \\ \supp(\sigma_0) = \perp}} z^{\exc_B(\sigma_0)} \Big) \prod_{i=1}^k \Big( \sum_{\sigma \in \frakC(|S_i|)} z^{\exc(\sigma)} \Big).
\end{equation}
In the proof of Lemma~\ref{l:cyc_exc}, we showed that the (usual) exponential generating function of the terms in the product is~$\log (A(z,x))$.
An application of the type B Exponential Formula and~\eqref{eq:exc_cycles_B} yields
\[
B(z,x) = \bigg( 1 + \sum_{d \geq 1} \Big( \sum_{\substack{\sigma \in \frakB_d \\ \supp(\sigma) = \perp}} z^{\exc_B(\sigma)} \Big) \dfrac{x^d}{(2d)!!} \bigg)\, \exp\Big(\dfrac{\log (A(z,x))}{2}\Big) 
\]
Dividing both sides by~$\exp\big(\tfrac{\log (A(z,x))}{2}\big) = \sqrt{A(z,x)}$ and subtracting $1$ yields the result.
\end{proof}

We are now ready to complete the proof of the main theorem in this section.
The steps of the proof mirror those of the type A result.

\begin{proof}[Proof of {\rm Theorem~\ref{t:dims_simul_e-spaces_B}}]
We will use formula \eqref{eq:eta_simplicial_arr} to compute the values $\eta_\rmX(\Xi_r(\pi^B_d))$.
Recall that the formula reads
\[
\sum_r \eta_\rmX(\Xi_r(\pi^B_d)) z^r \sum_{\rmY \geq \rmX} \mu(\rmX,\rmY)h((\pi^B_d)_\rmY,z).
\]
Using formulas~\eqref{eq:mu_prod_B} and~\eqref{eq:prod_perm_B}, we see that for a flat $\rmX = \{S_0,\dots,S_k,\opp{S_k}\}$ this expression equals the following product
\begin{multline*}
\bigg( \sum_{\substack{\rmY \vdash^B S_0 \\ \rmY = \{T_0,\dots,T_\ell,\opp{T_\ell}\}}} \mu(\perp,\rmY) B_{|T_0|/2} A_{|T_1|} \dots A_{|T_\ell|} \bigg) \cdot \\
\prod_{i=1}^k \bigg( \sum_{\substack{\rmY_i \vdash S_i \\ \rmY_i = \{T^i_1,\dots,T^i_\ell\}}} \mu(\perp,\rmY_i) A_{|T^i_1|}(z) \cdot \ldots \cdot A_{|T^i_\ell|}(z) \bigg).
\end{multline*}
Using Lemmas~\ref{l:cyc_exc} and~\ref{l:cyc_exc_B} in each factor, we deduce
\[
\sum_r \eta_\rmX(\Xi_r(\pi^B_d)) z^r
= \bigg( \sum_{\substack{\sigma \in \frakB(S_0) \\ \supp(\sigma) = \perp}} z^{\exc_B(\sigma)} \bigg)
\prod_{i=1}^k \Big( \sum_{\sigma \in \frakC(|S_i|)} z^{\exc(\sigma)} \Big)
= \sum_{\substack{\sigma \in \frakB_d \\ \supp(\sigma) = \rmX}} z^{\exc_B(\sigma)},
\]
where the last equality is~\eqref{eq:exc_cycles_B}.
Finally, taking the coefficient of~$z^r$ on both sides yields the result.
\end{proof}

Adding over all flats with the same dimension in Theorem~\ref{t:dims_simul_e-spaces_B}, we conclude the following.

\begin{corollary}
Let~$w \in \tits[\arr^\pm_d]$ be a characteristic element of non-critical parameter~$t$ and $\lambda > 0$.
The dimension of the simultaneous eigenspace for $w$ and $\delta_\lambda$ with eigenvalues $t^k$ and $\lambda^r$ is
\[
\big| \{ \sigma \in \frakB_d \,:\, \dim(\supp(\sigma)) = k,\ \exc_B(\sigma) = r \}\big|.
\]
\end{corollary}

As in the type A case, we can modify the proof of the previous Lemma to obtain the generating function for the bivariate polynomials $\sum_{\sigma \in \frakB_d} t^{\dim(\supp(\sigma))} z^{\exc_B(\sigma)}$.
To the best of our knowledge, this is a new result.

\begin{proposition}\label{p:bivariate-generating-B}
The following identity holds
\[
1 + \sum_{d \geq 1} \Big( \sum_{\sigma \in \frakB_d} t^{\dim(\supp(\sigma))} z^{\exc_B(\sigma)} \Big) \dfrac{x^n}{(2n)!!}
=
\dfrac{(1-z)e^{x(1-z)/2}}{1-ze^{x(1-z)}} \, \left( \dfrac{z-1}{z-e^{x(z-1)}} \right)^{\tfrac{t-1}{2}}.
\]
\end{proposition}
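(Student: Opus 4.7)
The plan is to mirror the proof of Lemma~\ref{l:cyc_exc_B}, but to insert an additional weight $t$ for each nonzero block pair of $\supp(\sigma)$. This parallels how the corollary of Lemma~\ref{l:cyc_exc} (Brenti's formula) was obtained in type A. Grouping signed permutations by their support, one has $\dim(\supp(\sigma)) = k$ whenever $\supp(\sigma) = \{S_0, S_1, \opp{S_1}, \dots, S_k, \opp{S_k}\}$, and Proposition~\ref{p:exc_b_additive} yields the factorization~\eqref{eq:exc_cycles_B}. Consequently,
$$
\sum_{\sigma \in \frakB_d} t^{\dim(\supp(\sigma))} z^{\exc_B(\sigma)}
=
\sum_{\rmX \vdash^B [\pm d]}
\bigg( \sum_{\substack{\sigma_0 \in \frakB(S_0) \\ \supp(\sigma_0) = \perp}} z^{\exc_B(\sigma_0)} \bigg)
\, t^{k} \,
\prod_{i=1}^k \bigg( \sum_{\sigma \in \frakC(|S_i|)} z^{\exc(\sigma)} \bigg).
$$

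This is precisely the coefficient $h_d$ of the Type B Compositional Formula applied to
$$
f_s = \sum_{\substack{\sigma_0 \in \frakB_s \\ \supp(\sigma_0) = \perp}} z^{\exc_B(\sigma_0)},
\qquad g_k = t^k, \qquad
a_\ell = \sum_{\sigma \in \frakC(\ell)} z^{\exc(\sigma)}.
$$
Lemma~\ref{l:cyc_exc_B} shows that $f(x) = B(z,x)/\sqrt{A(z,x)}$, the proof of Lemma~\ref{l:cyc_exc} shows that $a(x) = \log A(z,x)$, and the choice $g_d = t^d$ yields
$$
g(x) = \sum_{d \geq 0} t^d \frac{x^d}{(2d)!!} = e^{tx/2}.
$$
The Type B Compositional Formula then gives
$$
h(x) = f(x)\, g(a(x)) = \frac{B(z,x)}{\sqrt{A(z,x)}} \cdot A(z,x)^{t/2} = B(z,x) \cdot A(z,x)^{(t-1)/2},
$$
and substituting the closed forms~\eqref{eq:generating_eulerian} and~\eqref{eq:generating_eulerian_B} recovers the right-hand side of the claimed identity.

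No essential obstacle arises beyond what was already handled in Lemmas~\ref{l:cyc_exc} and~\ref{l:cyc_exc_B}: the argument simply packages those generating functions through the Type B Compositional Formula, using $g_k = t^k$ to introduce the marker per nonzero block pair. The only point deserving a brief check is that $g_k = t^k$ does produce the factor $t^{\dim(\supp(\sigma))}$ per signed partition, which is immediate from the definition of $h_d$ in the formula.
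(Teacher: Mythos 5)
Your proof is correct and follows essentially the same route as the paper: group by support, apply the factorization from Proposition~\ref{p:exc_b_additive}, and feed the generating functions $B(z,x)/\sqrt{A(z,x)}$ and $\log A(z,x)$ into the Type B Compositional Formula. The only cosmetic difference is that you carry the marker via $g_k = t^k$ while the paper absorbs a factor of $t$ into each cyclic factor and uses the Exponential Formula; both yield $A(z,x)^{t/2}$ identically.
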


\begin{proof}
We can slightly modify~\eqref{eq:exc_cycles_B} to obtain
\[
\sum_{\substack{\sigma \in \frakB_d \\ \supp(\sigma) = \rmX}} t^{\dim(\rmX)} z^{\exc_B(\sigma)}
= \Big( \sum_{\substack{\sigma \in \frakB(S_0) \\ \supp(\sigma) = \perp}} z^{\exc_B(\sigma_0)} \Big) \prod_{i=1}^k \Big( t \sum_{\sigma \in \frakC(|S_i|)} z^{\exc(\sigma)} \Big).
\] 
Using the (type B) generating functions of the factors deduced in the proofs of Lemmas \ref{l:cyc_exc} and \ref{l:cyc_exc_B}, and the type B compositional formula, we deduce
\[
1 + \sum_{d \geq 1} \Big( \sum_{\sigma \in \frakB_d} t^{\dim(\supp(\sigma))} z^{\exc_B(\sigma)} \Big) \dfrac{x^n}{(2n)!!}
=
\dfrac{B(z,x)}{\sqrt{A(z,x)}}\exp\Big(\dfrac{t\log (A(z,x))}{2}\Big),
\]
which equals $B(z,x)A(z,x)^{\tfrac{t-1}{2}}$.
Substituting the expressions for $A(z,x)$ and $B(z,x)$ in~\eqref{eq:generating_eulerian} and~\eqref{eq:generating_eulerian_B} yields the result.
\end{proof}

Specializing $t := 0$ and $x := 2x$ gives an alternative expression for the exponential generating function of the OEIS sequence \href{https://oeis.org/A156919}{A156919}.
In our context, these coefficients count the number of signed permutations whose action on $\RR^d$ has no nonzero fixed point weighted by the statistic $\exc_B$.

\subsection{Two bases basis for type B generalized permutahedra}

Faces of the standard simplex $\Delta_{[d]}$ correspond to linearly independent rays of the cone of generalized permutahedra in $\RR^d$. In the language of the present work, this means that $\{ \log[\Delta_S] \,:\, S \subseteq [d],\, |S| \geq 2 \}$ forms a linear basis of $\Xi_1(\pi_d)$ and that $\log[\Delta_S]$ is not the sum of the log-classes of other generalized permutahedra (other than trivial dilations of itself).
The goal of this section is to establish an analogous result for the type B case.

The standard simplex coincides with the \emph{weight polytope} $P_{\frakS_d}(\lambda_1)$ in the sense of Ardila, Castillo, Eur, and Postnikov~\citep{acep20submodular}, where $\lambda_1$ is the fundamental weight $(1,0,\dots,0)$ of $\frakS_d$. In this manner, the cross-polytope $P_{\frakB_d}(\lambda_1) = \Conv\{\pm e_i \,:\, i \in [d]\}$ is the type~B analog of the standard simplex. 
However, in the same paper the authors point out that the faces of the cross-polytope span a space of roughly half the desired dimension.
This is intuitively clear once we notice that the collection of faces of $P_{\frakB_d}(\lambda_1)$ entirely contained in one orthant span the same space in $\Xi_1(\pi^B_d)$ as the faces contained in the opposite orthant.
The following result shows that it is not possible to find a single type B generalized permutahedron whose faces generate $\Xi_1(\pi^B_d)$.

\begin{proposition}\label{p:min_num_gens}
Let $\cP = \{ \p_\alpha \}_\alpha$ be a collection of type B generalized permutahedra such that $\{ \log[\p_\alpha] \}_\alpha$ spans $\Xi_1(\pi^B_d)$. Then, $\cP$ contains at least $2^{d-1}$ full dimensional polytopes.
\end{proposition}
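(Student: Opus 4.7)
The plan is to extract, from Theorem~\ref{t:dims_simul_e-spaces_B}, the single number $\eta_\perp(\Xi_1(\pi^B_d)) = 2^{d-1}$, and to show separately that any log-class of a non-full-dimensional deformation of $\pi^B_d$ is killed by the Eulerian idempotent $\B{E}_\perp$ at the bottom flat. These two facts combine to say that the projection $x \mapsto x \cdot \B{E}_\perp$ sends the span of non-full-dimensional log-classes to zero while its image has dimension $2^{d-1}$; so a spanning family $\{\log[\p_\alpha]\}$ for $\Xi_1(\pi^B_d)$ must contain at least $2^{d-1}$ full-dimensional $\p_\alpha$.

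For the first fact, Theorem~\ref{t:dims_simul_e-spaces_B} identifies $\eta_\perp(\Xi_1(\pi^B_d))$ with the number of $\sigma \in \frakB_d$ satisfying $\supp(\sigma) = \perp$ and $\exc_B(\sigma) = 1$. I would compute this coefficient via the type B exponential generating function $C(z,x) := B(z,x)/\sqrt{A(z,x)}$ appearing in the proof of Lemma~\ref{l:cyc_exc_B}: evaluating $A(0,x) = e^x$ and $B(0,x) = e^{x/2}$ gives $C(0,x) = 1$, and a short logarithmic derivative calculation yields $\partial_z C(z,x)\big|_{z=0} = \tfrac{1}{2}(e^x - 1)$; extracting the coefficient of $\tfrac{x^d}{(2d)!!}$ then produces $\eta_\perp(\Xi_1(\pi^B_d)) = \tfrac{(2d)!!}{2\, d!} = 2^{d-1}$.

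The heart of the argument, and the main obstacle, is the second fact: $\log[\p] \cdot \B{E}_\perp = 0$ for every non-full-dimensional deformation $\p$ of $\pi^B_d$. I would proceed by induction on $\dim\p$; the base case $\dim \p = 0$ is trivial since $\log[\p] = 0$. For the inductive step, write $\B{E}_\perp = \sum_F a^F \B{H}_F$ and use that each $\B{H}_F$ acts as a graded algebra endomorphism on $\Pi(\pi^B_d)$ by Theorem~\ref{t:mod_str}, so $\log[\p] \cdot \B{H}_F = \log[\p_F]$. Grouping faces of the arrangement by the face of $\p$ they select yields
\[
\log[\p] \cdot \B{E}_\perp \;=\; \sum_{\f \leq \p} S(\f)\,\log[\f],
\qquad
S(\f) := \sum_{F\,:\, \p_F = \f} a^F.
\]
Multiplying once more on the right by $\B{E}_\perp$ and using $\B{E}_\perp^2 = \B{E}_\perp$ gives $\log[\p]\cdot\B{E}_\perp = \sum_{\f \leq \p} S(\f)\,(\log[\f] \cdot \B{E}_\perp)$. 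Every proper face $\f < \p$ satisfies $\dim \f < \dim \p < d$ and is again a deformation of $\pi^B_d$ (the assignment $v' \mapsto \f_{v'} = \f \cap \p_{v'}$ is constant on each cone of $\face_\p$, hence on each cone of $\face[\arr^\pm_d]$), so the inductive hypothesis yields $\log[\f] \cdot \B{E}_\perp = 0$. The only surviving term corresponds to $\f = \p$, whose coefficient equals $S(\p) = \sum_{F\,:\, \supp F \leq \rmX} a^F = \chi_\rmX(\B{E}_\perp)$, where $\rmX$ is the lineality space of $\face_\p$; since $\p$ is non-full-dimensional $\rmX > \perp$, and orthogonality of the Eulerian idempotents forces $\chi_\rmX(\B{E}_\perp) = 0$.

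The key subtlety is that multiplying by $\B{E}_\perp$ a second time is essential: the raw expansion $\sum_\f S(\f)\log[\f]$ is not expected to vanish term by term, but after the extra projection the induction collapses it to the single term $\f = \p$, whose coefficient $\chi_\rmX(\B{E}_\perp)$ vanishes precisely because non-full-dimensionality forces $\rmX \neq \perp$.
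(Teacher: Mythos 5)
Your proposal is correct in substance and has the same two-part skeleton as the paper's proof (a dimension count for $\Xi_1(\pi^B_d)\cdot\B{E}_\perp$ plus the vanishing of $\log[\p]\cdot\B{E}_\perp$ for non-full-dimensional $\p$), but both parts are executed differently. For the dimension count the paper argues combinatorially: a signed permutation with $\supp(\sigma)=\perp$ and $\exc_B(\sigma)=1$ must be a product of at most two excedance-free cycles on involution-inclusive sets, so these permutations biject with unordered pairs $\{S,[\pm d]\setminus S\}$ of involution-inclusive subsets, giving $2^{d-1}$; your extraction of the coefficient from $B(z,x)/\sqrt{A(z,x)}$ is exactly the ``alternative'' computation the paper mentions in passing, and your numbers check out. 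For the vanishing statement the paper has a one-line argument: since $\p$ is not full-dimensional there is a face $F\neq O$ of $\arr^\pm_d$ with $\p_F=\p$, and then $\log[\p]\cdot\B{E}_\perp=\log[\p]\cdot(\B{H}_F\B{E}_\perp)=0$ because $\B{H}_F\cdot\B{E}_\perp=0$ for $F\neq O$ (cited from Aguiar--Ardila). Your induction over the face lattice, with the second multiplication by $\B{E}_\perp$ and the character evaluation $\chi_\rmX(\B{E}_\perp)=0$ for $\rmX>\perp$, is a valid self-contained re-derivation of the same fact (note $\chi_\rmX(\B{E}_\rmY)=\delta_{\rmX,\rmY}$ follows from $\supp(\B{E}_\rmY)=\B{Q}_\rmY$ and \eqref{eq:H_and_Q}, which is the precise reason rather than ``orthogonality''); it is heavier, but it avoids invoking the external lemma.

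One justification in your inductive step is wrong as stated, though the fact you need is true: you assert that a proper face $\f<\p$ is again a type B generalized permutahedron because ``$\f_{v'}=\f\cap\p_{v'}$ is constant on each cone of $\face_\p$.'' The identity $\f_{v'}=\f\cap\p_{v'}$ fails in general (the intersection can be empty, e.g.\ for the bottom edge of a square and $v'$ pointing up), and constancy on cones of $\face_\p$ is not the right mechanism. The correct justification, available from the paper's setup, is via Minkowski summands: a small dilate of $\p$ is a summand of $\pi^B_d$ (Shephard), hence a small dilate of $\f=\p_v$ is a summand of the face $(\pi^B_d)_v$, which is itself a summand of the zonotope $\pi^B_d$, so $\f$ is a deformation of $\pi^B_d$. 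Alternatively, one can simply note that $\log[\p]\cdot\B{H}_F=\log[\p_F]$ lies in $\Pi((\pi^B_d)_F)\subseteq\Pi(\pi^B_d)$ by the definition of the action, and run the induction inside $\Pi(\pi^B_d)$. With that repair your argument is complete.
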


\begin{proof}
Let $\{\B{E}_\rmX\}_\rmX \subseteq \tits[\arr^\pm_d]$ be an Eulerian family of $\arr^\pm_d$.
The result follows from the following two facts, which we justify below.
\begin{enumerate}
\item The projection of $\log[\p]$ to the subspace $\Xi_1(\pi^B_d) \cdot \B{E}_\perp$ is zero unless $\p$ is full-dimensional.
\item $\dim_\RR(\Xi_1(\pi^B_d) \cdot \B{E}_\perp) = 2^{d-1}$.
\end{enumerate}

\paragraph{(1)} Let $\p$ be a type B generalized permutahedron that is not full-dimensional.
Then we can choose a face $F \neq O$ of $\arr^\pm_d$ such that $\p_F = \p$, for instance any maximal face in $N(\p,\p)$.
It follows from \citep[Lemma 11.12]{aa17} that $\B{H}_F \cdot \B{E}_\perp = 0$.
Therefore,
\[
\log[\p] \cdot \B{E}_\perp = \log([\p] \cdot \B{H}_F) \cdot \B{E}_\perp  = \log[\p] \cdot (\B{H}_F \cdot \B{E}_\perp) = 0.
\]

\paragraph{(2)} Recall that by definition, $\dim_\RR(\Xi_1(\pi^B_d) \cdot \B{E}_\perp) = \eta_\perp(\Xi_1(\pi^B_d))$.
Using Theorem~\ref{t:dims_simul_e-spaces_B}, this is the number of signed permutations $\sigma \in \frakB_d$ with $\supp(\sigma) = \perp$ and $\exc_B(\sigma) = 1$.
A signed permutation $\sigma \in \frakB_d$ with $\supp(\sigma) = \perp$ is the product of cycles on involution-inclusive subsets $S \subseteq [\pm d]$.
Moreover, each such cycle adds at least $1$ to the number of negations of $\sigma$.
Thus, a signed permutation with $\supp(\sigma) = \perp$ and $\exc_B(\sigma) = 1$ must be the product of either $1$ or $2$ cycles and have no excedances.
Such cycles are necessarily of the form $(d\, d-1\, \dots 1 \opp{d}\, \opp{d-1} \dots \opp{1})$.
Thus, the permutations counted by $\eta_\perp(\Xi_1(\pi^B_d))$ are in correspondence with \emph{unordered} pairs $\{S,[\pm d] \setminus S\}$ of involution-inclusive subsets of $[\pm d]$, and there are precisely $2^{d-1}$ many of them.

Alternatively, one can manipulate the generating function in Proposition~\ref{p:bivariate-generating-B} ($\tfrac{\partial}{\partial z} \big|_{t,z=0}$) to deduce that the coefficient of $t^0z^1x^d$ is $2^{d-1}$.
\end{proof}

In~\citep{ppr20shard}, Padrol, Pilaud, and Ritter construct a family of type B generalized permutahedra called \emph{shard polytopes}.
They show that any type B generalized permutahedron can be written uniquely as a signed Minkowski sum of these polytopes (up to translation).
Already in $\RR^3$, there are $14$ full-dimensional shard polytopes.
We proceed to construct a family of generators that achieves the minimum imposed by the previous proposition.

For a nonempty involution-exclusive subset $S \subseteq [\pm d]$, define the simplices
\[
\Delta_S = \Conv\{ e_i \, \mid \, i \in S \}
\qquad\qquad
\Delta^0_S = \Conv(\{{\bf 0}\} \cup \{ e_i \, \,:\, \, i \in S \}),
\]
where for $i \in [d]$, $e_{\opp{i}} = - e_i$.
Observe that the only full-dimensional simplices in this collection are $\Delta^0_S$ with $|S| = d$.
We say that an involution-exclusive subset $S \subseteq [\pm d]$ is \textbf{special} if in addition $\min\{ |i| \,:\, i \in S \} \in S$.
From now on, we will only consider simplices $\Delta_S$ and $\Delta^0_S$ for special sets $S$.

\begin{theorem}\label{t:B-gens}
Every type B generalized permutahedron can be written uniquely as a signed Minkowski sum of the simplices $\Delta_S$ and $\Delta^0_S$ with $S$ as above.
\end{theorem}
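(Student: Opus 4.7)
The plan is to prove the theorem by showing that the log-classes $\log[\Delta_S]$ (for $|S|\geq 2$, $S$ special) and $\log[\Delta^0_S]$ (for $S$ special) form a linear basis of $\Xi_1(\pi^B_d)$; the unique signed Minkowski decomposition then follows by the log-class/Minkowski correspondence used in the type A discussion. First, I verify the dimension count: the number of special involution-exclusive $S\subseteq[\pm d]$ of size $k\geq 1$ is $\binom{d}{k}2^{k-1}$ (pick $\{|s|\,:\,s\in S\}\subseteq [d]$, then freely sign each non-minimum element), giving $(3^d-1)/2$ special subsets in total and a proposed family of $(\tfrac{3^d-1}{2}-d)+\tfrac{3^d-1}{2}=3^d-1-d$ simplices. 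By Theorem~\ref{t:simple_h_dimension} combined with $h(\pi^B_d,z)=B_d(z)$, one has $\dim_\RR\Xi_1(\pi^B_d)=B_{d,1}=3^d-1-d$ (the last value is immediate from Proposition~\ref{p:bivariate-generating-B} at $t=0$, or a direct count), so it remains to prove linear independence.

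For this, I exploit the decomposition $\Xi_1(\pi^B_d)=\bigoplus_\rmX\Xi_1(\pi^B_d)\cdot\B{E}_\rmX$ associated to any Eulerian family of $\arr^\pm_d$. Combining Theorem~\ref{t:dims_simul_e-spaces_B} with Proposition~\ref{p:exc_b_additive} and the observation that a signed permutation of a nonempty block with trivial support has $\exc_B\geq 1$, the only flats $\rmX$ contributing nontrivially are: (A) flats $\rmX_A(T)$ with zero block $\pm T$ for $\emptyset\neq T\subseteq [d]$ and every other block a singleton, where $\dim(\Xi_1\cdot\B{E}_{\rmX_A(T)})=2^{|T|-1}$; and (B) flats $\rmX_B(S)$ with empty zero block and exactly one non-singleton block pair $\{S,\opp S\}$, where $\dim=1$. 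I assign $\Delta^0_S$ to $\rmX_A(\{|s|\,:\,s\in S\})$ and $\Delta_S$ (for $|S|\geq 2$) to $\rmX_B(S)$; by construction, the number of simplices assigned to each nonzero component agrees with its dimension.

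For Case B I argue as in Proposition~\ref{p:basisAdams1}: when $\supp(F)=\rmX_B(S)$ one has $(\Delta_S)_F=\Delta_S$, so the corresponding coefficients in $\B{E}_{\rmX_B(S)}$ sum to $1$ and contribute $\log[\Delta_S]$; when $\supp(F)\supsetneq\rmX_B(S)$, $(\Delta_S)_F$ is a proper subsimplex. Since $\log[\Delta_S]\cdot\B{E}_{\rmX_B(S)}$ lies in the one-dimensional space $\Xi_1\cdot\B{E}_{\rmX_B(S)}$, and analogous computations show $\log[\Delta_{S'}]\cdot\B{E}_{\rmX_B(S)}=0$ for distinct special $S'$ of size $\geq 2$, the projections form a basis of the Case B components. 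For Case A, I apply the face surjection $\psi_\f$ of Proposition~\ref{p:zonotope_surjective} with $\f=(\pi^B_d)_{\rmX_A(T)}\cong \pi^B_{|T|}$: this morphism of graded algebras intertwines the $\B{E}_{\rmX_A(T)}$-projection upstairs with the $\B{E}_\perp$-projection in $\tits[\arr^\pm_{|T|}]$ downstairs, reducing the problem to the base case $T=[d]$. The principal obstacle is this base case: verifying that the $2^{d-1}$ full-dimensional simplices $\Delta^0_S$ (with $|S|=d$, $S$ special) project to a basis of the $2^{d-1}$-dimensional space $\Xi_1(\pi^B_d)\cdot\B{E}_\perp$. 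I would settle it by picking an explicit Eulerian family in which $\B{E}_\perp$ is supported on the faces of a distinguished chamber and its neighbors (generalizing Example~\ref{ex:Eul_C2}), expanding $\log[\Delta^0_S]\cdot\B{E}_\perp$ face-by-face via Theorem~\ref{t:direction_morphism}, and checking that the resulting $2^{d-1}\times 2^{d-1}$ matrix indexed by special sign patterns is triangular with nonzero diagonal.
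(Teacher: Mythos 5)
Your reduction to linear independence of the log-classes, the dimension count $3^d-d-1$, and the identification of the isotypic components of $\Xi_1(\pi^B_d)$ in which they must sit (the components $\Xi_1(\pi^B_d)\cdot\B{E}_\rmX$ of dimension $2^{|T|-1}$ for flats with zero block $\pm T$ and singletons elsewhere, and the one-dimensional components for flats with a single non-singleton pair $\{S,\opp{S}\}$) are all correct. But the core of the argument has a genuine gap: projecting a vanishing linear combination onto each $\B{E}_\rmX$ only isolates the coefficients you want if the \emph{entire} matrix of projections is invertible, e.g.\ triangular with respect to some order on flats. It is not enough that the simplices \emph{assigned} to a component project to a basis of it, because the other generators also project nontrivially onto that component. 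Your claim that ``analogous computations show $\log[\Delta_{S'}]\cdot\B{E}_{\rmX_B(S)}=0$ for distinct special $S'$'' is false in general: already in the type A analogue with the Adams Eulerian family in $\RR^3$ one computes $\log[\Delta_{\{1,2,3\}}]\cdot\B{E}_{\{12,3\}}=\tfrac12\log[\Delta_{\{1,2\}}]\neq 0$, and the type B projections behave the same way (a simplex $\Delta_{S'}$ or $\Delta^0_{S'}$ with $S\subsetneq S'$ contributes a nonzero multiple of $\log[\Delta_S]$ to the $\rmX_B(S)$-component). A related circularity: even your ``diagonal is nonzero'' step extracts the coefficient of $\log[\Delta_S]$ as if the log-classes were already known to be independent; in type A (Proposition~\ref{p:basisAdams1}) this was legitimate because Ardila--Benedetti--Doker supply the basis, but in type B that is exactly the statement under proof. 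Finally, the decisive case --- that the $2^{d-1}$ full-dimensional simplices $\Delta^0_S$ project to a basis of $\Xi_1(\pi^B_d)\cdot\B{E}_\perp$ --- is only sketched as a plan (``I would settle it by\dots checking that the matrix is triangular''), and the asserted intertwining of $\psi_\f$ with the Eulerian projections is also left unproved; so the proposal does not yet constitute a proof.

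For comparison, the paper avoids Eulerian idempotents altogether: it uses the injective linear map $\Psi_1:\Xi_1(\pi^B_d)\to\RR\face^{d-1}[\arr^\pm_d]$, $\log[\p]\mapsto\sum_F\vol_1(\p_F)\B{H}_F$, furnished by McMullen, writes down the normal cones of the edges of $\Delta_S$ and $\Delta^0_S$, and then kills the coefficients $\alpha_S$, $\beta_S$ of a hypothetical dependence by evaluating $\Psi_1$ on carefully chosen $(d-1)$-dimensional faces, with an induction on $|S|$ for the $\alpha$'s and a sign-cancellation (inclusion--exclusion) argument for the $\beta$'s. If you want to salvage your eigenspace approach, you would need to establish a genuine triangularity of the projection matrix with respect to a suitable order on flats (and settle the $\B{E}_\perp$ base case explicitly); as written, the off-diagonal terms do not vanish and the argument does not close.
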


\begin{figure}[ht]
\[
\begin{gathered}
\begin{tikzpicture}
\newdimen\sh
\sh = 1mm
\draw [shift={(\sh,\sh)}, fill=gray!30!white] (0,0) -- (1,0) -- (0,1) -- (0,0);
\draw [shift={(\sh,-\sh)}, fill=gray!30!white] (0,0) -- (1,0) -- (0,-1) -- (0,0);
\draw [shift={(\sh,0)}, thick] (0,0) -- (1,0);
\draw [shift={(0,\sh)}, thick] (0,0) -- (0,1);
\draw [shift={(1.71\sh,1.71\sh)}, thick] (1,0) -- (0,1);
\draw [shift={(2\sh,-2\sh)}, thick] (1,0) -- (0,-1);
\node [shift={(0,3\sh)}, circle, inner sep=1pt,fill=black,draw] at (0,1) {};
\node [shift={(3\sh,0)}, circle, inner sep=1pt,fill=black,draw] at (1,0) {};
\end{tikzpicture}
\end{gathered}
\hspace*{.15\linewidth}
\tdplotsetmaincoords{60}{120}
\begin{gathered}
\begin{tikzpicture}[tdplot_main_coords]
\draw [->] (0,0,0) -- (1,0,0) node [left] {$e_1$};
\draw [->] (0,0,0) -- (0,1,0) node [right] {$e_2$};
\draw [->] (0,0,0) -- (0,0,1) node [above] {$e_3$};
\end{tikzpicture}
\end{gathered}
\hspace*{.05\linewidth}
\begin{gathered}
\begin{tikzpicture}[tdplot_main_coords]
\draw [shift={((0,.5,.5)} , very thick] (1,0,0) -- (0,1,0) -- (0,0,1) -- cycle;
\draw [shift={((0,.5,.5)} , dashed ] (0,1,0) -- (0,0,0) -- (1,0,0);
\draw [shift={((0,.5,.5)} , dashed ] (0,0,0) -- (0,0,1);
\draw [shift={((0,.5,-.5)} , very thick] (0,1,0) -- (0,0,-1) -- (1,0,0) -- (0,0,0) -- cycle -- (1,0,0);
\draw [shift={((0,.5,-.5)} , dashed] (0,0,0) -- (0,0,-1);
\draw [shift={((0,-.5,-.5)} , very thick] (0,0,0) -- (0,0,-1) -- (1,0,0) -- (0,-1,0) -- cycle -- (1,0,0);
\draw [shift={((0,-.5,-.5)} , dashed] (0,-1,0) -- (0,0,-1);
\draw [shift={((0,-.5,.5)} , very thick] (0,0,1) -- (0,0,0) -- (1,0,0) -- (0,-1,0) -- cycle -- (1,0,0);
\draw [shift={((0,-.5,.5)} , dashed] (0,-1,0) -- (0,0,0);
\end{tikzpicture}
\end{gathered}
\]
\caption{The type B generalized permutahedra that are \emph{signed Minkowski generators} in $\RR^2$. In $\RR^3$, we only show the $4 = 2^{3-1}$ full-dimensional generators.}
\end{figure}
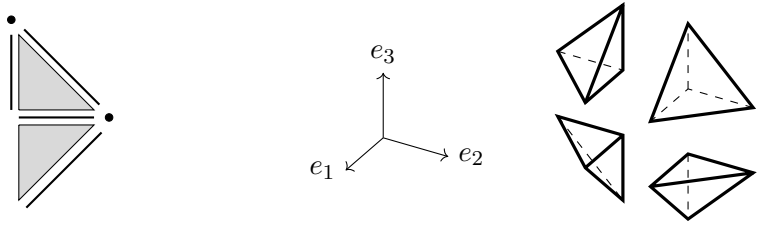

\begin{proof}
Observe that there are exactly $d$ zero-dimensional polytopes in this collection: $\Delta_S$ with $|S| = 1$.
Since they minimally generate all translations in $\RR^d$ and $h_1(\pi^B_d) = 3^d - d - 1$, the statement is equivalent to showing that the collection
\begin{equation}\label{eq:B-log-basis}
\{ \log[\Delta_S] \,:\, S \text{ special} ,\, |S| \geq 2 \} \cup \{ \log[\Delta^0_S] \,:\, S \text{ special} \}
\end{equation}
is linearly independent in $\Xi_1(\pi^B_d)$.
To prove this, we will use the following linear map, whose existence is guaranteed by \citep[Theorem 5]{mcmullen89}: 
\[
\begin{array}{RCL}
\Psi_1 : \Xi_1(\pi^B_d) & \longrightarrow & \RR \face^{d-1}[\arr^\pm_d] \\
\log[\p] & \longmapsto & \sum_F \vol_1(\p_F) \B{H}_F.
\end{array}
\]
Here, $\face^{d-1}[\arr^\pm_d]$ denotes the collection of $(d-1)$-dimensional faces of $\arr^\pm_d$ and $\vol_1$ denotes the $1$-dimensional normalized volume.
See Section~\ref{s:remarks} for more details.

The \textbf{edges} ($1$-dimensional faces) of $\Delta_S$ are of the form $\Delta_{ab} : = \Delta_{\{a,b\}}$ for distinct $a,b \in S$.
On the other hand, the edges of $\Delta^0_S$ are of the form $\Delta_{ab}$ for distinct $a,b \in S$ or of the form $\Delta^0_{a} : = \Delta^0_{\{a\}}$ for $a \in S$.
The normalized volume of all such edges is $1$.
Thus, $\log[\Delta_S]$ (resp. $\log[\Delta^0_S]$) is mapped to the sum of $\B{H}_F$ as $F$ ranges over the faces of $\arr^\pm_d$ that are maximal inside the normal cone of each edge of $\Delta_S$ (resp. $\Delta^0_S$).

The following is an explicit description of the normal cones for the edges of $\Delta_S$ and $\Delta^0_S$. Recall that if $j \in [d]$, then $x_{\opp{j}}$ denotes  $-x_j$.
\begin{equation}\label{eq:B-normal}
\begin{array}{RL}
N(\Delta_{ab},\Delta_S) = & \{ x \in \RR^d \,:\, x_a = x_b > x_s \text{ for all } s \in S \setminus \{a,b\} \} \\
N(\Delta_{ab},\Delta^0_S) = & \{ x \in \RR^d \,:\, x_a = x_b > 0 , \, x_a = x_b > x_s \text{ for all } s \in S \setminus \{a,b\} \} \\ 
N(\Delta^0_{a},\Delta^0_S) = & \{ x \in \RR^d \,:\, x_a = 0 > x_s \text{ for all } s \in S \setminus \{a\} \}
\end{array}
\end{equation}
Suppose
\[
\varphi = \sum \alpha_S \log[\Delta_S] + \sum \beta_S \log[\Delta^0_S] = 0.
\]
By studying the coefficient of different faces of $\arr^\pm_d$ in $\Psi_1(\varphi)$, we will conclude that all the coefficients $\alpha_S$ and $\beta_S$ are necessarily $0$, and therefore the collection~\eqref{eq:B-log-basis} is linearly independent.

We will start with the coefficients $\alpha$.
Fix $k \in [d-1]$, we prove by induction on $|S|$ that $\alpha_S = 0$ for all $S$ such that $k = \min\{|i| \,:\, i \in S\}$.
The base case is $|S| = 2$.
Let $S = \{k,j\}$ with $j \in \pm [k+1,d]$ and consider the following face of $\arr^\pm_d$
\[
x_1 > x_2 > \dots > x_{k-1} > x_{\opp{k}} = x_{\opp{j}} > x_{k+1} > \dots > \widehat{x_{|j|}} > \dots > x_d > 0,
\]
or equivalently,
\[
0 > x_{\opp{d}} > \dots > \widehat{x_{\opp{|j|}}} > \dots > x_{\opp{k+1}} > x_k = x_j > x_{\opp{k-1}} > \dots > x_{\opp{1}}.
\]
The hat over $x_{|j|}$ denotes that the corresponding inequality is missing (since we already imposed $x_{|j|} = \pm x_k$).
By the description of the normal cones in \eqref{eq:B-normal}, we observe that this face only appears in a normal cone of either:
\begin{itemize}
\item $\Delta_{S'}$ or $\Delta^0_{S'}$ with $\opp{k},\opp{j} \in S'$ and $S' \setminus \{\opp{k},\opp{j}\} \subseteq \pm [k+1,d] \cup \opp{[k-1]}$, or
\item $\Delta_{S'}$ with $k,j \in S'$ and $S' \setminus \{k,j\} \subseteq \opp{[k-1]}$.
\end{itemize}
Since we are only considering special sets $S'$ (that is, such that $\min\{ |i| \,:\, i \in S' \} \in S'$) and $k > 0$, the only possibility is $S' = \{k,j\} = S$. 
Thus, the coefficient of this face in $\Psi_1(\varphi)$ is $\alpha_{\{k,j\}} = 0$.

Now suppose $S = \{k,j_1,\dots,j_r\}$ with $j_1,\dots,j_r \in \pm[k+1,d]$.
If we proceed with the previous analysis for the face
\[
0 > x_{\opp{d}} > \dots > \widehat{x_{\opp{|j_i|}}} > \dots > x_{\opp{k+1}} > x_k = x_{j_1} > x_{j_2} > \dots > x_{j_r} > x_{\opp{k-1}} > \dots > x_{\opp{1}},
\]
we conclude that it appears in a normal cone of $\Delta_{S'}$ if and only if $\{k,j_1\} \subseteq S' \subseteq S$.
Thus, the coefficient of this face in $\Psi_1(\varphi)$ is $\sum_{\{k,j_1\} \subseteq S' \subseteq S} \alpha_{S'} = 0$.
By induction we have $\alpha_{S'} = 0$ for all $\{k,j_1\} \subseteq S' \subsetneq S$, and so we conclude $\alpha_S = 0$.

We proceed now with the coefficients $\beta$.
Fix $k \in [d-1]$ and $j \in \pm[k+1,d]$, and let $\cS = \cS(k,j)$ be the collection of special subsets $S$ such that $k,j \in S$ and $k = \min\{ |i| \,:\, i \in S\}$.
We first prove that $\beta_S = 0$ for any $S \in \cS$.
Let $S = \{k,j,s_1,\dots,s_r\} \in \cS$ and let $\{t_1,\dots,t_{r'}\} = [k+1,d] \setminus \{|j|,|s_1|,\dots,|s_r|\}$.
Consider the following $(d-1)$-dimensional face of $\arr^\pm_d$:
\[
x_1 > \dots > x_{k-1} > x_{\opp{s_1}} > \dots > x_{\opp{s_r}} > x_k = x_j > x_{t_1} > \dots > x_{t_{r'}} > 0.
\]
Its coefficient in $\Psi_1(\varphi)$ is
\[
y_S = \sum_{S' \in \cS,\, S' \cap \opp{S} = \emptyset} \beta_{S'} = 0,
\]
We claim that these relations, one for each $S \in \cS$, imply that $\beta_S = 0$ for all $S \in \cS$.
This follows since for any $S \in \cS$,
\begin{multline*}
\sum_{T \in \cS,\, T \cap \opp{S} = \emptyset} (-1)^{|T|} y_{\opp{T}}
= \sum_{T \in \cS,\, T \cap \opp{S} = \emptyset} \bigg( \sum_{S' \in \cS,\, S' \cap T = \emptyset} (-1)^{|T|} \beta_{S'} \bigg) \\
= \sum_{S' \in \cS} \bigg( \sum_{\substack{T \in \cS,\, T \cap \opp{S} = \emptyset \\ T \cap S' = \emptyset }} (-1)^{|T|} \bigg) \beta_{S'} = \pm \beta_S.
\end{multline*}
The last equality follows from the next observations:
\begin{itemize}
\item If $S' = S = \{k,j,s_1,\dots,s_r\}$, then the sum is over all $T \in \cS$ such that $\{k,j\} \subseteq T \subseteq \{k,j\} \cup \pm \big( [k+1,d] \setminus \{j,s_1,\dots,s_r\} \big)$.
Then, this sum is of the form
\[
\sum_{v \in \{-1,0,1\}^\ell} (-1)^{|v|}
\]
where $\ell = d-k-r-1 \geq 0$ and $|v|$ is the number of nonzero entries of $v$.
This sum is $(-1 + 1 - 1)^\ell = (-1)^\ell$.

\item If $S' \neq S$ and there is an element $i \in S \setminus S'$, then the subsets in the sum come in pairs $T,T'$ where $i \in T$ and $T' = T \setminus \{i\}$.
Since $(-1)^{|T|} + (-1)^{|T'|} = 0$, this shows that the whole sum is $0$ in this case.

\item If $S' \neq S$ and $S \subsetneq S'$, we can chose $i \in S' \setminus S$ and repeat the previous argument with pairs $T,T'$ where $\opp{i} \in T$ and $T' = T \setminus \{\opp{i}\}$.
\end{itemize}

Since each special subset $S$ with $|S| \geq 2$ is in $\cS(k,j)$ for some $k,j$, we are only left to prove that $\beta_{\{k\}} = 0$ for all $k \in [d]$.
For fixed $k \in [d]$, consider the following face of $\arr^\pm_d$:
\[
x_k = 0 > x_{d} > x_{d-1} > \dots > \widehat{x_{k}} > \dots > x_{1}.
\]
The coefficient of this face in $\Psi_1(\varphi)$ is $\sum_{\{k\} \subseteq S \subseteq [d]} \beta_S = 0$.
Since we have shown that $\beta_S = 0$ for any $S$ with $|S| \geq 2$, we conclude that $\beta_{\{k\}} = 0$.
\end{proof}

\begin{remark}\label{r:not_symmetric}
Observe that the generating collection $\{ \Delta_S \,:\, S \subseteq [d] \}$ for generalized permutahedra is invariant under the action of $\frakS_d$.
In contrast, the collection of generators for type B generalized permutahedra presented in the previous theorem fails to be invariant under the action of $\frakB_d$. This is not by accident. Already in $\RR^2$, we see that any collection of type B generalized permutahedra that contains a triangle $\p$ (full-dimensional simplex) and that is invariant under the action of $\frakB_d$, will contain the rotations of $\p$ by $90^\circ$, $180^\circ$, and $270^\circ$, all of which are necessarily different. Thus, such a collection will not attain the minimum number of full-dimensional polytopes required by Proposition~\ref{p:min_num_gens}.
In the following theorem, we present a different collection of generators for type B generalized permutahedra that is invariant under the action of $\frakB_d$.
\end{remark}

\begin{theorem}\label{t:B-sym-gens}
Every type B generalized permutahedron can be written uniquely as a signed Minkowski sum of the simplices $\{\Delta^0_S \,:\, S \subseteq [\pm d] $ \emph{involution-exclusive}$\}$.
\end{theorem}

\begin{proof}
First observe that for every $i \in [d]$, $\Delta^0_{\opp{i}}$ is a translation of $\Delta^0_i$; explicitly $\Delta^0_{\opp{i}} + \Delta_i = \Delta^0_i$.
Then, we could replace the pair $\Delta^0_i,\Delta^0_{\opp{i}}$ in the statement with the pair $\Delta^0_i,\Delta_i$.
Since $\{\Delta_i \,:\, i \in [d]\}$ minimally generates the translations in $\RR^d$, the statement is equivalent to showing that the collection
\begin{equation}\label{eq:B-log-basis-symmetric}
\{ \log[\Delta^0_S] \,:\, |S| \geq 2 \} \cup \{ \log[\Delta^0_i] \,:\, i \in [d] \}
\end{equation}
is a basis of $\Xi_1(\pi^B_d)$.
By comparing the number of elements, we see that it is enough to write the elements in the basis~\eqref{eq:B-log-basis} as a linear combination of the elements in the proposed basis above.
Since the proposed basis already contains all the elements of the form $\log[\Delta^0_S]$, we are only left to write $\log[\Delta_S]$ for $|S| \geq 2$ as a linear combination of the proposed basis. We do this by induction on $|S|$.

We use $\lambda = -1$ and $r = 1$ in Theorem~\ref{t:euler_dilations} to conclude that for any polytope $\p$:
\[
\log[-\p] = - \sum_{\q \leq \p} (-1)^{\dim(\q)} \log[\q].
\]
Applying this identity to $\p = \Delta^0_{S} = -\Delta^0_{\opp{S}}$ yields
\begin{equation}\label{eq:aux1}
\log[\Delta_S] = (-1)^{|S|} \log[\Delta^0_{\opp{S}}] + \sum_{T \subseteq S}(-1)^{|S \setminus T|}\log[\Delta^0_T] - \sum_{T \subsetneq S}(-1)^{|S \setminus T|}\log[\Delta_T].
\end{equation}
If $S = \{i,j\}$, this is
\[
\log[\Delta_S] = \log[\Delta^0_{\opp{S}}] + \log[\Delta^0_S] - \log[\Delta^0_i] - \log[\Delta^0_j].
\]
Observe that all of the terms in the right are in the proposed basis, so we have completed the base case of the induction.
If $|S| > 2$, we use~\eqref{eq:aux1} and observe that, by induction hypothesis, all the terms in the right can be written as a linear combination of the proposed basis.
\end{proof}

\section{Hopf monoid structure}\label{s:hopf}

Combinatorial species were originally introduced by Joyal~\citep{joyal} as a tool for studying generating power series from a combinatorial perspective.
A comprehensive introduction to the theory of species can be found in the work by Bergeron, Labelle, and Leroux~\citep{bll98species}.
The category of species possesses more than one monoidal structure.
Of central interest for the present work are the \emph{Cauchy} and \emph{Hadamard} product.
Aguiar and Mahajan~\citep{am10,am13} have explored these structures extensively, and have exploited this rich algebraic structure to obtain outstanding combinatorial results.
The first of these structures leads to the definition of Hopf monoids in species, a very active topic of research in recent years.

Aguiar and Ardila introduced the Hopf monoid of generalized permutahedra~${\sf GP}$ in~\citep{aa17}.
It contains many other interesting combinatorial Hopf monoids as submonoids.
In this section we show that the valuation~\eqref{eq:valuation_property} and translation invariance~\eqref{eq:translation_property} properties define a Hopf monoid quotient of~${\sf GP}$.

\subsection{Hopf monoids in a nutshell}

Let~${\sf set^\times}$ denote the category of finite sets with bijections as morphisms, and~${\sf Vec}$ the category of vector spaces and linear maps.
The category of species~${\sf Sp}$ is the functor category~$[{\sf set^\times} , {\sf Vec}]$.
It is a symmetric monoidal category under the \emph{Cauchy product}.
The Cauchy product of two species~$\sfp$ and~$\sfq$ is
\[
(\sfp \bdot \sfq)[I] = \bigoplus_{I = S \sqcup T} \sfp[S] \otimes \sfq[T].
\]
We say a species~$\sfh$ is a \emph{Hopf monoid} if it is a bimonoid with an \emph{antipode} in this monoidal category.

Let us make these definitions explicit.
A \textbf{species}~$\sfp$ consists of the following data:
\begin{enumerate}[i.]
\item For each finite set~$I$, a vector space~$\sfp[I]$.
\item For each bijection~$\sigma : I \rightarrow J$, a linear isomorphism~$\sfp[\sigma] : \sfp[I] \rightarrow \sfp[J]$.
These linear maps satisfy
\[
\sfp[\sigma \circ \tau] = \sfp[\sigma] \circ \sfp[\tau]
\qqand
\sfp[\Id] = \Id.
\]
\end{enumerate}
A morphism of species~$f : \sfp \rightarrow \sfq$ is a collection of linear maps
\[
f_I : \sfp[I] \rightarrow \sfq[I],
\]
one for each finite set~$I$, that commute with bijections.
That is,~$f_J \circ \sfp[\sigma] = \sfq[\sigma] \circ f_I$
for any bijection~$\sigma: I \rightarrow J$.

A \textbf{Hopf monoid} is a species~$\sfh$ together a collection of product, coproduct and antipode maps
\[
\begin{array}{rccccrccccrccc}
\mu_{S,T} : & \sfh[S] \otimes \sfh[T] & \rightarrow & \sfh[I] & \phantom{aaa} & \Delta_{S,T} : & \sfh[I] & \rightarrow & \sfh[S] \otimes \sfh[T] & \phantom{aaa} & \apode_I : & \sfh[I] & \rightarrow & \sfh[I] \\
& x \otimes y & \mapsto & x \bdot y & & & z & \mapsto & \sum z|_S \otimes z/_S & & & z & \mapsto & \apode_I(z)
\end{array}
\]
for all finite sets~$I$ and decompositions~$I = S \sqcup T$.
This morphisms satisfy certain naturality, (co)unitality, (co)associativity and compatibility axioms.
See~\citep[Section 2]{am13} and~\citep{aa17} for more details.

A Hopf monoid $\sfh$ is \textbf{commutative} if $x \bdot y = y \bdot x$ for all decompositions $I = S \sqcup T$ and structures $x \in \sfh[S]$, $y \in \sfh[T]$.
If $\sfh$ is a commutative Hopf monoid, then each space $\sfh[[d]]$ has the structure of a right $\tits[\arr_d]$-module, as follows.
Let $F \in \face[\arr_d]$ be the face associated to a composition $(S_1,\dots,S_k)$ of $[d]$. The associativity and coassociativity axioms imply that there are well defined maps
\[
\mu_F : \sfh[S_1] \otimes \dots \otimes \sfh[S_k] \rightarrow \sfh[[d]]
\qand
\Delta_F : \sfh[[d]] \rightarrow \sfh[S_1] \otimes \dots \otimes \sfh[S_k],
\]
obtained by iterating the product and coproduct maps in any meaningful way.
Then, $\sfh[[d]]$ is a right $\tits[\arr_d]$-module with the following structure:
\[
x \cdot \B{H}_F = \mu_F \circ \Delta_F (x).
\]
We will see that the module structure on generalized permutahedra in Section~\ref{s:braid} arises in this manner.

\subsection{Generalized permutahedra and the McMullen (co)ideal}\label{ss:mc}

As a species, ${\sf GP}[I]$ is the vector space with basis
\[
\GP[I] = \{ \p \subseteq \RR^I \,:\, \p \text{ is a generalized permutahedron} \}.
\]
The product~$\mu_{S,T}$ is defined by
\[
\mu_{S,T}(\p \otimes \q) = \p \times \q,
\]
for all permutahedra~$\p \in \GP[S]$ and~$\q \in \GP[T]$.
In particular,~${\sf GP}$ is a commutative monoid.
Let~$F$ be the face of the braid arrangement in~$\RR^I$ corresponding to the composition~$(S,T)$, and let~$v \in \relint(F)$.
Then, for any~$\p \in \GP[I]$, the face~$\p_v$ decomposes as a product of generalized permutahedra~$\p|_S \times \p/_S$, with~$\p|_S \in \GP[S]$ and~$\p/_S \in \GP[T]$, see \citep[Proposition 5.2]{aa17}.
The coproduct is defined by
\[
\Delta_{S,T}(\p) = \p|_S \otimes \p/_S.
\]
Aguiar and Ardila also give the following grouping-free and cancellation-free formula for its antipode.
For a generalized permutahedron~$\p \in \GP[I]$,
\[
\apode_I(\p) = (-1)^{|I|} \sum_{\q \leq \p} (-1)^{\dim(\q)} \q.
\]

We now introduce the subspecies~${\sf Mc}$ of~${\sf GP}$.
The space~${\sf Mc}[I] \subseteq {\sf GP}[I]$ is the subspace spanned by elements
\begin{equation}\label{eq:gensMc1}
\p \cup \q + \p \cap \q - \p - \q \qquad \text{for}\quad \p,\q  \in \GP[I] \text{ such that } \p \cup \q \text{ is convex},
\end{equation}
and
\begin{equation}\label{eq:gensMc2}
\p_{+t} - \p \qquad \text{for}\quad \p\in \GP[I] \text{ and } t \in \RR^I,
\end{equation}
where~$\p_{+t}$ denotes the Minkowski sum~$\p + \{t\}$.
The sums and differences in~\eqref{eq:gensMc1} and~\eqref{eq:gensMc2} correspond to the vector space structure of~${\sf GP}[I]$, not to Minkowski sum or difference.

\begin{remark}
Recall from Section~\ref{ss:subalgebra_polytope} that if $\p\cup\q$ is a polytope, then~$\p\cup\q$ and $\p \cap \q$ are necessarily generalized permutahedra.
Thus, the elements~\eqref{eq:gensMc1} are indeed in ${\sf GP}[I]$.
\end{remark}

The following result shows that ${\sf Mc}$ defines relations compatible with the Hopf monoid structure of ${\sf GP}$.
Ardila and Sanchez~\citep{as20valGP} prove a similar result for extended generalized permutahedra by realizing ${\sf Mc}$ as the kernel of a Hopf monoid morphism.

\begin{theorem}
The subspecies~${\sf Mc}$ is an ideal and a coideal of~${\sf GP}$.
That is,
\[
\mu_{S,T}\big( {\sf Mc}[S] \otimes {\sf GP}[T] \big)
\subseteq {\sf Mc}[I]
\qand
\Delta_{S,T} \big( {\sf Mc}[I] \big)
\subseteq {\sf Mc}[S] \otimes {\sf GP}[T] + {\sf GP}[S] \otimes {\sf Mc}[T],
\]
for any~$I = S \sqcup T$.
Therefore, the quotient species~$\widetilde{\Pi}$ defined by
\[
\widetilde{\Pi}[I] = {\sf GP}[I]/{\sf Mc}[I]
\]
inherits the Hopf monoid structure of~${\sf GP}$.
\end{theorem}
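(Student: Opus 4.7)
The plan is to verify the ideal and coideal containments separately; the ideal part is routine, while the coideal part requires analyzing how face selection interacts with unions and intersections. For the \emph{ideal} property, I would use that Cartesian products distribute over unions, intersections, and translations: for $\p,\q\in\GP[S]$ with $\p\cup\q$ convex and $\rr\in\GP[T]$, one has $(\p\cup\q)\times\rr=(\p\times\rr)\cup(\q\times\rr)$, $(\p\cap\q)\times\rr=(\p\times\rr)\cap(\q\times\rr)$, and $(\p_{+t})\times\rr=(\p\times\rr)_{+(t,0)}$. Hence $\mu_{S,T}$ sends the two families of generators of ${\sf Mc}[S]$, tensored with any $\rr$, to valuation and translation relations in ${\sf Mc}[I]$; commutativity of $\sf GP$ handles the other factor.

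For the \emph{coideal} part, fix $v$ in the relative interior of the face of $\arr_I$ corresponding to $(S,T)$, so that $\p_v=\p|_S\times\p/_S$ for every $\p\in\GP[I]$. The translation generator is immediate: $(\p_{+t})_v=\p_v+\{t\}$ splits coordinate-wise, giving $(\p_{+t})|_S=(\p|_S)_{+t|_S}$ and $(\p_{+t})/_S=(\p/_S)_{+t|_T}$, and the telescoping identity
\[
(\p|_S)_{+t|_S}\otimes(\p/_S)_{+t|_T}-\p|_S\otimes\p/_S=\bigl((\p|_S)_{+t|_S}-\p|_S\bigr)\otimes(\p/_S)_{+t|_T}+\p|_S\otimes\bigl((\p/_S)_{+t|_T}-\p/_S\bigr)
\]
then exhibits $\Delta_{S,T}(\p_{+t}-\p)$ in ${\sf Mc}[S]\otimes{\sf GP}[T]+{\sf GP}[S]\otimes{\sf Mc}[T]$.

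For the valuation generator, let $M_\p=\max_{p\in\p}\langle v,p\rangle$ and define $M_\q$ analogously. If $M_\p\neq M_\q$, say $M_\p<M_\q$, convexity of $\p\cup\q$ forces every $v$-maximizer of $\p$ to lie in $\q$: along a segment from a $v$-maximizer of $\p$ towards any $v$-maximizer of $\q$, the inner product with $v$ strictly exceeds $M_\p$ away from the starting point, so the open segment lies in $\q$, and closedness of $\q$ places the endpoint in $\q$ as well. This yields $(\p\cap\q)_v=\p_v$ and $(\p\cup\q)_v=\q_v$, so $\Delta_{S,T}$ annihilates the valuation relation. If $M_\p=M_\q$, all four faces lie in the same supporting hyperplane, with $(\p\cup\q)_v=\p_v\cup\q_v$ and $(\p\cap\q)_v=\p_v\cap\q_v$ (the intersection being nonempty because a union of two disjoint nonempty convex sets is never convex). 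Since $\p\cup\q\in\GP[I]$, the union $(\p|_S\times\p/_S)\cup(\q|_S\times\q/_S)$ must itself decompose as a Cartesian product, which elementary set theory forces only when $\p|_S=\q|_S$ or $\p/_S=\q/_S$. In the first case the image of the valuation relation becomes $\p|_S\otimes\bigl((\p/_S\cup\q/_S)+(\p/_S\cap\q/_S)-\p/_S-\q/_S\bigr)$, a valuation relation in the second tensor factor; the other case is symmetric.

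The main obstacle I anticipate is this last structural step: justifying that a union $\p_v\cup\q_v$ forced to be a Cartesian product must already share one of its two factors between $\p_v$ and $\q_v$. This uses both the convexity of $\p\cup\q$ and the product decomposition of its faces specific to generalized permutahedra. Once that step is in hand, the remainder reduces to bookkeeping with the bilinearity of the tensor product.
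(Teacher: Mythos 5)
Your argument follows the paper's proof very closely: the ideal part and the translation generators are handled identically (your telescoping identity just makes explicit what the paper leaves implicit), and for the valuation generators you arrive at the same key structural step, namely analyzing when a union of two Cartesian products is again a Cartesian product. Your case split by the maxima $M_\p$, $M_\q$ of $\langle v,\cdot\rangle$ is a legitimate variant of the paper's split by whether $(\p\cup\q)_v$ is contained in $\p$ or in $\q$, and your argument that $M_\p<M_\q$ forces $\p_v\subseteq\q$, hence $(\p\cup\q)_v=\q_v$ and $(\p\cap\q)_v=\p_v$ so that $\Delta_{S,T}$ annihilates the relation, is correct.

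The one genuine flaw is the claim that ``elementary set theory forces'' $\p|_S=\q|_S$ or $\p/_S=\q/_S$ whenever $(\p|_S\times\p/_S)\cup(\q|_S\times\q/_S)$ is a Cartesian product. The correct statement, and the one the paper uses, has a third alternative: one of the two products may contain the other. For example, if $A\times B\subsetneq C\times D$ with both $A\subsetneq C$ and $B\subsetneq D$, the union equals $C\times D$, a Cartesian product with neither pair of factors equal. Your case $M_\p=M_\q$ does not exclude $\p_v\subsetneq\q_v$ or $\q_v\subsetneq\p_v$, so the dichotomy you invoke can fail precisely there. The paper sidesteps this by taking ``$(\p\cup\q)_v$ contained in $\p$ or in $\q$'' as its first case, which absorbs all containment situations; you should either adopt that case division or add the observation that in the containment subcase one again has $(\p\cup\q)_v=\q_v$ and $(\p\cap\q)_v=\p_v$ (or vice versa), so the image of the relation is zero. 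With that subcase added your proof is complete; the remaining bookkeeping you defer --- that $(\p\cup\q)|_S=\p|_S$, that $(\p\cup\q)/_S=\p/_S\cup\q/_S$ is itself a generalized permutahedron, and that $(\p\cap\q)/_S=\p/_S\cap\q/_S$, all obtained by projecting to $\RR^S$ and $\RR^T$ --- is exactly what the paper carries out.
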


\begin{proof}
For generators of~${\sf Mc}$ of the form~\eqref{eq:gensMc2}, the result follows from the following two observations.
If~$\p \in \GP[S]$,~$\rr \in \GP[T]$ and~$t \in \RR^S$, then
\[
\p_{+t} \times \rr = (\p \times \rr)_{+(t,0)}.
\]
If~$\p \in \GP[I]$ and~$t \in \RR^T$, then
\[
\Delta_{S,T}(\p_{+t}) = (\p|_S)_{+t_S} \otimes (\p|_S)_{+t_T},
\]
where~$t_S$ and~$t_T$ denote the projections of~$t$ to~$\RR^S$ and~$\RR^T$, respectively.

We will now focus on generators of~${\sf Mc}$ of the form~\eqref{eq:gensMc1}.
Fix an arbitrary finite set~$I$ and a nontrivial decomposition~$I = S \sqcup T$.
Let~$v \in \RR^I$ be any vector in the interior of the corresponding face of the braid arrangement.

Suppose~$\p,\q,\p \cup \q \in {\sf GP}[S]$ and~$\rr \in {\sf GP}[T]$.
Then, 
\[
(\p \cup \q) \times \rr = (\p \times \rr) \cup (\q \times \rr),
\qquad\qquad
(\p \cap \q) \times \rr = (\p \times \rr) \cap (\q \times \rr),
\]
and~$(\p \cup \q) \times \rr = (\p \times \rr) \cup (\q \times \rr)$ is a polytope if and only if~$\p \cup \q$ is.
It follows that
\[
\mu_{S,T}\big( (\p \cup \q + \p \cap \q - \p - \q) \otimes \rr \big) = (\p \times \rr) \cup (\q \times \rr) + (\p \times \rr) \cap (\q \times \rr) - \p \times \rr - \q \times \rr \in {\sf Mc}[I].
\]
Since~${\sf GP}$ is commutative, this proves that~${\sf Mc}$ is an ideal.

Now, let~$\p,\q,\p \cup \q \in {\sf GP}[I]$.
There are two possibilities:
\begin{enumerate}[i.]
\item The face~$(\p \cup \q)_v$ of~$\p \cup \q$ is completely contained in~$\p$ or in~$\q$.
Without loss of generality, suppose the former.
Then~$(\p \cup \q)_v = \p_v$ and, necessarily,~$(\p \cap \q)_v = \q_v$.
Hence,
\[
\Delta_{S,T} \big( \p \cup \q + \p \cap \q - \p - \q \big) =
\Delta_{S,T}(\p) + \Delta_{S,T}(\q) - \Delta_{S,T}(\p) - \Delta_{S,T}(\q) = 0
\]

\item The face~$(\p \cup \q)_v$ is not contained in~$\p$ nor in~$\q$.
Hence,~$(\p \cup \q)_v = \p_v \cup \q_v$ and~$(\p \cap \q)_v = \p_v \cap \q_v$.
Expanding the first equality we have
\[
(\p \cup \q)|_S \times (\p \cup \q)/_S = (\p|_S \times \p/_S) \cup (\q|_S \times \q/_S).
\]
The union of two Cartesian products~$A \times B$ and~$C \times D$ is again a Cartesian product if and only if one contains the other or either~$A = C$ or~$B = D$.
By assumption, the is no containment between~$\p_v$ and~$\q_v$.
We can therefore assume without loss of generality that
\begin{equation}\label{eq:coideal1}
\p|_S = \q|_S.
\end{equation}
Projecting to~$\RR^S$ and~$\RR^T$, we further see that
\begin{equation}\label{eq:coideal2}
(\p \cup \q)|_S = \p|_S \cup \q|_S = \p|_S
\qqand
(\p \cup \q)/_S = \p/_S \cup \q/_S.
\end{equation}
In particular,~$\p/_S \cup \q/_S$ is a generalized permutahedron.
On the other hand, expanding~$(\p \cap \q)_v = \p_v \cap \q_v$, we have
\[
(\p \cap \q)|_S \times (\p \cap \q)/_S = (\p|_S \times \p/_S) \cap (\p|_S \times \q/_S) = \p|_S \times (\p/_S \cap \q/_S).
\]
Comparing factors, we deduce
\begin{equation}\label{eq:coideal3}
(\p \cap \q)|_S = \p|_S
\qqand
(\p \cap \q)/_S = \p/_S \cap \q/_S.
\end{equation}
Putting together~\eqref{eq:coideal1},~\eqref{eq:coideal2} and~\eqref{eq:coideal3}, we conclude
\begin{multline*}
\Delta_{S,T}\big(\p \cup \q + \p \cap \q - \p - \q\big)
= \Delta_{S,T}(\p\cup\q) + \Delta_{S,T}(\q\cap\q) - \Delta_{S,T}(\p) - \Delta_{S,T}(\q)\\
= \p|_S \otimes (\p/_S \cup \q/_S) + \p|_S \otimes (\p/_S \cap \q/_S) - \p|_S \otimes \p/_S - \p|_S \otimes \q/_S \\
= \p|_S \otimes \big( \p/_S\cup\q/_S + \p/_S\cap\q/_S - \p/_S - \q/_S \big)
\in {\sf GP}[S] \otimes {\sf Mc}[T].
\end{multline*}
\end{enumerate}
Thus, in either case we get~$\Delta_{S,T}\big(\p \cup \q + \p \cap \q - \p - \q\big) \in {\sf Mc}[S] \otimes {\sf GP}[T] + {\sf GP}[S] \otimes {\sf Mc}[T]$. That is,~${\sf Mc}$ is a coideal of~${\sf GP}$.
\end{proof}

Comparing the generators of the (co)ideal ${\sf Mc}$ with the relations defining McMullen's polytope algebra, it is natural to ask if $\widetilde{\Pi}[I]$ agrees with $\Pi(\pi_I)$, where $\pi_I \subseteq \RR^I$ is the standard permutahedron.
The answer is no.
For instance, in the polytope algebra, the structure of $\RR$-vector space is defined so that
\[
\alpha
\big( \big[
\begin{gathered}
\begin{tikzpicture}
\draw [very thick,fill=gray!30!white] (0,0) -- (.4,0) node [above] {\scriptsize$1$} -- (.8,0);
\end{tikzpicture}
\end{gathered}
\big]
-1 \big)
=
\big[
\begin{gathered}
\begin{tikzpicture}
\draw [very thick,fill=gray!30!white] (0,0) -- (.5,0) node [above] {\scriptsize$\alpha$} -- (1,0);
\end{tikzpicture}
\end{gathered}
\big]
-1,
\]
where the numbers over the segments denote their length.
However, if $\alpha$ is an irrational number, then
\[
\big( \alpha \begin{gathered}
\begin{tikzpicture}
\draw [very thick,fill=gray!30!white] (0,0) -- (.4,0) node [above] {\scriptsize$1$} -- (.8,0);
\end{tikzpicture}
\end{gathered} - \alpha \bullet \big)
-
\big( \begin{gathered}
\begin{tikzpicture}
\draw [very thick,fill=gray!30!white] (0,0) -- (.5,0) node [above] {\scriptsize$\alpha$} -- (1,0);
\end{tikzpicture}
\end{gathered} - \bullet \big) \notin {\sf Mc}[I].
\]
Nevertheless, the proof of the previous theorem works verbatim to show that the Hopf monoid operations are well-defined in $\Pi(\pi_I)$.

\begin{theorem}\label{t:McGP}
The species $\Pi$ defined by $\Pi[I] = \Pi(\pi_I)$ is a Hopf monoid, with product and coproduct defined for any decomposition $I = S \sqcup T$ by
\[
\mu_{S,T}([\q_1] \otimes [\q_2]) = [\q_1 \times \q_2]
\qqand
\Delta_{S,T}([\p]) = [\p|_S] \otimes [\p/_S]
\]
for all classes of generalized permutahedra $[\q_1]\in \Pi[S]$, $[\q_2]\in \Pi[T]$, and $[\p]\in \Pi[I]$.
Moreover, $\Pi$ is a Hopf monoid quotient of ${\sf GP}$ via the morphism $\p \mapsto [\p]$.
\end{theorem}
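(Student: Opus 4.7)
The plan is to show that the Cartesian-product and face-restriction operations, which are already defined on ${\sf GP}$, descend well-definedly to the quotients $\Pi[I] = \Pi(\pi_I)$, and that the resulting maps satisfy the Hopf monoid axioms inherited from ${\sf GP}$. The quotient map factors as ${\sf GP}[I] \twoheadrightarrow \widetilde{\Pi}[I] \twoheadrightarrow \Pi[I]$, where the first surjection is a Hopf monoid morphism by the preceding theorem. So the task reduces to showing that the operations further descend to $\Pi$.

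For the product $\mu_{S,T}([\q_1]\otimes[\q_2]) = [\q_1 \times \q_2]$, well-definedness on $\Pi[S]\otimes\Pi[T]$ amounts to showing that $\q \mapsto \q \times \rr$ (for fixed $\rr$) preserves all defining relations of $\Pi(\pi_S)$, and symmetrically in the second slot. The valuation and translation arguments from the previous theorem apply verbatim: $(\p\cup\p')\times\rr = (\p\times\rr)\cup(\p'\times\rr)$, the analogous intersection identity, and $(\p+\{t\})\times\rr = (\p\times\rr)+\{(t,{\bf 0})\}$. The additional $\RR$-linearity relations of $\Pi(V)$ in McMullen's construction are controlled by the dilation morphisms $\delta_\lambda$ (with $\Xi_r$ characterized as the $\lambda^r$-eigenspace of $\delta_\lambda$), and these relations transport well because $\delta_\lambda(\p\times\rr) = (\delta_\lambda\p)\times(\delta_\lambda\rr)$.

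For the coproduct $\Delta_{S,T}([\p]) = [\p|_S]\otimes[\p/_S]$, the case analysis in the preceding theorem handles the valuation and translation relations, splitting according to whether the face $\p_v$ (for $v$ in the relative interior of the cone associated to $(S,T)$) lies entirely in one of $\p,\p'$ or properly splits along the union. Either the coproduct of the valuation relation vanishes outright, or it lands in ${\sf Mc}[S]\otimes{\sf GP}[T] + {\sf GP}[S]\otimes{\sf Mc}[T]$, which is zero in $\Pi[S]\otimes\Pi[T]$. Compatibility with the dilation-induced relations follows from $\delta_\lambda(\p|_S) = (\delta_\lambda\p)|_S$ and $\delta_\lambda(\p/_S) = (\delta_\lambda\p)/_S$.

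Once $\mu_{S,T}$ and $\Delta_{S,T}$ are shown to be well-defined, the Hopf monoid axioms (naturality, (co)associativity, (co)unitality, bimonoid compatibility, and existence of an antipode by connectedness) are automatic, since they hold on ${\sf GP}$ and the quotient map $\p \mapsto [\p]$ is a surjective morphism of species intertwining all the structures. The main obstacle is conceptual rather than computational: one must track simultaneously the relations present already in $\widetilde{\Pi}$ and the further scalar-multiplication identities that promote it to $\Pi$; the key fact that saves the argument is the naturality of dilations with respect to Cartesian product and face restriction, which ensures the $\RR$-vector-space structure McMullen constructs (in stages, first on $Z_1$ over $\QQ$, then over $\RR$) is preserved by both operations.
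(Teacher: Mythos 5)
Your proposal is correct and follows essentially the paper's route: the paper disposes of this theorem by observing that the proof of the ideal/coideal theorem for ${\sf Mc}$ applies verbatim to show that the product and coproduct are well defined on $\Pi(\pi_I)$, with all Hopf monoid axioms then inherited along the surjection $\p \mapsto [\p]$ from ${\sf GP}$. Your added remarks on the dilation morphisms $\delta_\lambda$ simply make explicit why the extra real-scalar relations of $\Pi(\pi_I)$ (the very point the paper flags with the irrational-dilation example showing $\widetilde{\Pi}[I] \neq \Pi[I]$) cause no trouble, something the paper leaves implicit.
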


It immediately follows from the definitions above that, if $F \in \face[\arr_d]$ is the face corresponding to a composition $(S,T)$ of $[d]$, then
\[
[\p] \cdot \B{H}_F = \mu_{S,T} \circ \Delta_{S,T}([\p])
\]
for all generalized permutahedra $\p \subseteq \RR^d$. Thus, in the case of generalized permutahedra, the module structure of Section~\ref{s:braid} is precisely the one induced from the Hopf monoid structure above.

The antipode formula of~${\sf GP}$ descends to the quotient~$\Pi$, but it is no longer grouping-free in general.
The Euler map~\eqref{eq:Euler_map} allows us to write the antipode formula of~$\Pi$ in a very compact form:
\[
\apode_I([\p]) = (-1)^{|I|} [\p]^*.
\]

\subsection{Higher monoidal structures}

We have just proved that~$\Pi$ is a Hopf monoid in the symmetric monoidal category~$({\sf Sp},\bdot)$.
The algebra structure of each space~$\Pi[I]$ defined by McMullen can also be defined for~${\sf GP}$.
In both cases, this endows the species with the structure of a monoid in the symmetric monoidal category~$({\sf Sp},\times)$ of species with the \emph{Hadamard product}.
The Hadamard product of two species~$\sfp$ and~$\sfq$ is defined by
\[
(\sfp \times \sfq)[I] = \sfp[I] \otimes \sfq[I].
\]
Hence, a monoid in~$({\sf Sp},\times)$ consists of a species~$\sfp$ with an algebra structure on each space~$\sfp[I]$.
For generalized permutahedra, these structures are compatible in a very special way.

\begin{theorem}
The species of generalized permutahedra~${\sf GP}$ and its quotient $\Pi$ are~$(2,1)$-monoids in the 3-monoidal category~$({\sf Sp},\bdot,\times,\bdot)$.
\end{theorem}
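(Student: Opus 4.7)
The plan is to verify the three pairwise compatibilities among the Cauchy product $\mu^\bdot$ (Cartesian product of polytopes), the Hadamard product $\mu^\times$ (Minkowski sum on each component), and the Cauchy coproduct $\Delta^\bdot$ (face decomposition of Aguiar-Ardila) that make ${\sf GP}$ a $(2,1)$-monoid. The Hopf-monoid compatibility between $\mu^\bdot$ and $\Delta^\bdot$ is already established on ${\sf GP}$ and descends to $\Pi$ by Theorem~\ref{t:McGP}. Likewise, McMullen's construction endows each $\Pi[I]$ with a monoid structure under Minkowski sum, and the same Minkowski sum gives a monoid structure on each ${\sf GP}[I]$. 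Thus the only remaining tasks are to verify (i) the exchange law between $\mu^\bdot$ and $\mu^\times$, (ii) the compatibility between $\mu^\times$ and $\Delta^\bdot$, and then to confirm that both statements pass to the quotient $\Pi$.

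For (i), the identity to prove is
\[
(\p_1 + \p_2) \times (\q_1 + \q_2) = (\p_1 \times \q_1) + (\p_2 \times \q_2),
\]
for $\p_i \in {\sf GP}[S]$, $\q_i \in {\sf GP}[T]$; this is immediate since both sides consist exactly of the points $(a_1 + a_2,\, b_1 + b_2)$ with $a_i \in \p_i$, $b_i \in \q_i$. For (ii), we must show $\Delta_{S,T}(\p_1 + \p_2) = (\p_1|_S + \p_2|_S) \otimes (\p_1/_S + \p_2/_S)$, which follows from the general fact that face maps distribute over Minkowski sum, $(\p_1 + \p_2)_v = (\p_1)_v + (\p_2)_v$, combined with the Cartesian decomposition of faces used to define $\Delta_{S,T}$ on ${\sf GP}$.

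For the quotient $\Pi$, the remaining step is to check that the Minkowski product descends, i.e., that ${\sf Mc}[I]$ is closed under Minkowski addition with any $\rr \in {\sf GP}[I]$. The translation generators behave well: $(\p_{+t} + \rr) - (\p + \rr) = (\p + \rr)_{+t} - (\p + \rr)$. For the valuation generators, $(\p \cup \p') + \rr = (\p + \rr) \cup (\p' + \rr)$ is clear, while the key nontrivial identity is
\[
(\p \cap \p') + \rr = (\p + \rr) \cap (\p' + \rr) \quad\text{whenever } \p \cup \p' \text{ is convex.}
\]
The $\subseteq$ inclusion is obvious; for $\supseteq$, given $x = p + r_1 = p' + r_2$, the segment from $p$ to $p'$ lies in the convex set $\p \cup \p'$, hence meets $\p \cap \p'$ at some point $s = \lambda p + (1-\lambda)p'$, and then $r := \lambda r_1 + (1-\lambda)r_2 \in \rr$ satisfies $x = s + r$. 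Once this lemma is available, the Minkowski action preserves the valuation relation, so $\mu^\times$ descends to $\Pi$ and the compatibilities (i) and (ii) pass to the quotient automatically. I expect the main obstacle to be precisely this geometric lemma, as it is the only place where the convexity hypothesis on $\p \cup \p'$ must be exploited in a genuinely nontrivial way; the rest of the argument reduces to routine bookkeeping with Cartesian products, Minkowski sums, and face maps.
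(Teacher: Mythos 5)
Your verification of the two remaining compatibilities is exactly the paper's argument: the exchange law $(\p_1+\p_2)\times(\q_1+\q_2)=(\p_1\times\q_1)+(\p_2\times\q_2)$ for the Cauchy/Hadamard pair, and $(\p+\q)_v=\p_v+\q_v$ projected to $\RR^S$ and $\RR^T$ for the Hadamard/coproduct pair. The one place you diverge is the step you flag as the main obstacle, namely that the Minkowski product descends past the valuation relations. The paper does not prove this at all: it takes $\Pi[I]=\Pi(\pi_I)$, McMullen's polytope algebra of the permutahedron, on which $[\p]\cdot[\q]=[\p+\q]$ is already well defined as part of McMullen's construction (and note that $\Pi(\pi_I)$ is \emph{not} literally ${\sf GP}[I]/{\sf Mc}[I]$, as the paper points out after Theorem~\ref{t:McGP}, because of the $\RR$-vector-space structure). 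Your self-contained substitute is nonetheless correct: the identity $(\p\cap\p')+\rr=(\p+\rr)\cap(\p'+\rr)$ when $\p\cup\p'$ is convex is the classical lemma underlying the fact that Minkowski addition is a valuation in each argument, and your connectedness argument for the reverse inclusion (the segment $[p,p']$ is covered by the two nonempty closed sets $[p,p']\cap\p$ and $[p,p']\cap\p'$, so they meet) is sound. So your route proves a little more from scratch, at the cost of redoing a piece of McMullen's foundational work that the paper simply cites.
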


See~\citep[Chapter 7]{am10} for the definition of higher monoidal categories and of monoids in such categories.
The notation~$(2,1)$ indicates that~${\sf GP}$ is a monoid with respect to the first two monoidal structures (Cartesian product and Minkowski sum, respectively) and a comonoid with respect to the last (coproduct maps~$\Delta_{S,T}$).

\begin{proof}
We only discuss the remaining compatibility axioms:
the compatibility between Cartesian product and Minkowski sum, and the compatibility between Minkowski sum and the coproduct.

The compatibility between Cartesian product and Minkowski sum boils down to the identity
\[
(\p_1 + \p_2) \times (\q_1 + \q_2) = (\p_1 \times \q_1) + (\p_2 \times \q_2)
\]
for~$\p_1,\p_2 \in \GP[S]$ and~$\q_1,\q_2 \in \GP[T]$, which one easily verifies for arbitrary sets $\p_1,\p_2 \subseteq \RR^S$ and~$\q_1,\q_2 \subseteq \RR^T$.

On the other hand, the compatibility between Minkowski sum and the coproduct is equivalent to the following identity for generalized permutahedra~$\p,\q \in \GP[I]$:
\[
(\p + \q)|_S \otimes (\p + \q)/_S = (\p|_S + \q|_S) \otimes (\p/_S + \q/_S).
\]
This follows by projecting the identity
$
(\p + \q)_v = \p_v + \q_v
$
to~$\RR^S$ and~$\RR^T$, respectively, where~$v$ is any vector in the interior of the face of the braid arrangement corresponding to the composition~$(S,T)$.
\end{proof}

The compatibility between Minkowski sum and the Hopf monoid operations refines the last statement in Theorem~\ref{t:mod_str}; which, in the language of this section, states that the maps $\mu_{S,T} \circ \Delta_{S,T}$ are compatible with the Minkowski sum operation.

\section{Final remarks and questions}\label{s:remarks}

\subsection*{1} Eulerian numbers are defined for any Coxeter group~$W$ in terms of~$W$-descents.
For the Coxeter groups of type A and B, descents and (B-)excedances are equally distributed, so we can interpret the~$W$-Eulerian polynomials as the generating functions for (B-)excedances.
However, the joint distributions of~$\big(|\supp(\cdot)|,\des(\cdot)\big)$ and~$\big(|\supp(\cdot)|,\exc(\cdot)\big)$ do not agree.
Therefore, Theorems~\ref{t:dims_simul_e-spaces_A} and~\ref{t:dims_simul_e-spaces_B} cannot immediately be expressed in terms of descents.

Extending the results of this section to other Coxeter groups~$W$ requires to find the correct notion of~$W$-excedance for other types.
Is there a nice analog of the result of Ardila, Benedetti, and Doker, and of Theorem~\ref{t:B-gens} in type D?

\subsection*{2} McMullen~\citep{mcmullen89} also studied valuation relations for the collection of polyhedral cones in~$V$.
The \emph{full cone group} of~$V$ is generated by the classes~$[C]$, one for each polyhedral cone~$C \subseteq V$.
They satisfy the following relation:
\begin{equation}\label{eq:val_cone}
[C_1 \cup C_2] = [C_1] + [C_2]
\end{equation}
whenever~$C_1 \cup C_2$ is a cone and~$C_1 \cap C_2$ is a proper face of~$C_1$ and of~$C_2$.
Note that this is not to say that the class~$[C_1 \cap C_2]$ is zero.

A cone of an arrangement~$\arr$ is any convex cone obtained as the union of faces of~$\arr$.
The space of formal linear combinations of cones~$\Omega[\arr]$ is a right~$\tits[\arr]$ module under the following operation.
If~$C$ is a cone of~$\arr$ and~$F \in \face[\arr]$, then
\[
C \cdot \B{H}_F = \begin{cases}
T_{\tilde{F}} C & \text{if } F \subseteq C,\\
0 & \text{otherwise},
\end{cases}
\]
where~$\tilde{F} \leq C$ is the minimum face of~$C$ containing~$F$ and~$T_{\tilde{F}} C$ denotes the tangent cone of~$C$ at~$\tilde{F}$. The relation~\eqref{eq:val_cone} is compatible with this action, and defines a quotient module~$\opp{\Omega}[\arr]$.

Restricting to the case of all the braid arrangements,~$\Omega$ defines a Hopf monoid in species. The product is defined by means of the Cartesian product. Let~$C$ be a cone of the braid arrangement in~$\RR^I$ and~$F$ the face corresponding to the composition~$(S,T) \vDash I$. If~$F \subseteq C$, the tangent cone~$T_{\tilde{F}} C$ decomposes as a product~$C|_S \times C/_S$ of cones in~$\RR^S$ and~$\RR^T$. The coproduct of~$\Omega$ is defined as follows:
\[
\Delta_{S,T}(C) = \begin{cases}
C|_S \otimes C/_S & \text{if } F \subseteq C \\
0 & \text{otherwise}.
\end{cases}
\]
With these operations,~$\Omega$ is isomorphic to the Hopf monoid of preposets~${\bf Q}$ considered in~\citep{aa17}.
Relation~\eqref{eq:val_cone} defines a Hopf monoid quotient~$\opp{\Omega}$.
Under a suitable change of basis,~$\opp{\Omega}$ is isomorphic to the dual Hopf monoid of faces~${\bf \face}^*$ defined in~\citep[Chapter 12]{am10}.

\subsection*{3} There is a Hopf monoid morphism~${\sf GP} \rightarrow \Omega$, whose components~${\sf GP}[I] \rightarrow \Omega[I]$ are defined as follows:
\begin{equation}\label{eq:GP-Omega}
\p \longmapsto \sum_{\q \leq \p} N(\q,\p) = \sum_{C \in \face_\p} C.
\end{equation}
Moreover, this is a morphism of~$(2,1)$-monoids in the 3-monoidal category~$({\sf Sp},\bdot,\times,\bdot)$, where the monoidal structure of~$\Omega$ under the Hadamard product is given by
\[
C_1 \cdot C_2 = \begin{cases}
C_1 \cap C_2 & \text{ if } \relint(C_1) \cap \relint(C_2) \neq \emptyset,\\
0 & \text{otherwise}.
\end{cases}
\]
That this map defines a morphism of monoids under the Hadamard product is equivalent to the following fact: the normal fan of~$\p + \q$ is the common refinement of~$\face_\p$ and~$\face_q$.

The map~\eqref{eq:GP-Omega} does \textbf{not} induce a well defined morphism~$\Pi \rightarrow \opp{\Omega}$.
In~\citep[Theorem 5]{mcmullen89}, McMullen shows that
\[
\p \longmapsto \sum_{\q \leq \p} \vol(\q) N(\q,\p)
\]
induces an injective map~$\Pi[I] \rightarrow \opp{\Omega}[I]$, where~$\vol(\q)$ is the \emph{normalized} volume of~$\q$ in the affine space spanned by~$\p$.
Moreover, one can verify that the induced morphism~$\Pi \rightarrow \opp{\Omega}$ is a morphism of Hopf monoids.
Is it possible to endow~$\opp{\Omega}$ with the structure of a~$(2,1)$-monoid so that the morphism above is a morphism of~$(2,1)$-monoids?

Such a structure on $\bar{\Omega}[I]$ would contain a subalgebra isomorphic to the Möbius algebra~$B^*(M)$ introduced by Huh and Wang in~\citep[Definition 5]{hw17topheavy}, where~$M$ is the matroid associated with the braid arrangement~$\arr_I$ in $\RR^I$.

\section*{Acknowledgments}

I thank my advisor Marcelo Aguiar for his suggestion to explore the interplay between dilations of polytopes and the characteristic operations in the Hopf monoid of generalized permutahedra. The presentation of this paper has substantially improved thanks to our conversations. I also thank Federico Ardila, Federico Castillo, and Mario Sanchez for their feedback and helpful discussions. I am also grateful to the anonymous referees for their valuable suggestions; which in particular led to Remark~\ref{r:not_symmetric} and the subsequent theorem.

\bibliographystyle{plain}
\bibliography{bib}

\end{document}